\documentclass[a4paper,11pt, reqno]{amsart}
\usepackage{amssymb,amsthm,amsmath}
 \usepackage{enumerate}
\usepackage{ifthen}
\usepackage{graphicx}
\usepackage{cite}
\usepackage{amsmath,amscd,amssymb}
\usepackage{latexsym}
\usepackage[colorlinks,citecolor=blue,pagebackref,hypertexnames=false]{hyperref}

\numberwithin{equation}{section}
\allowdisplaybreaks[2]
\theoremstyle{plain}
\newtheorem{theorem}{Theorem}[section]

\newtheorem{lemma}[theorem]{Lemma}
\newtheorem{corollary}[theorem]{Corollary}
\newtheorem{proposition}[theorem]{Proposition}

\theoremstyle{definition}
\newtheorem{definition}[theorem]{Definition}

\theoremstyle{remark}
\newtheorem{remark}[theorem]{Remark}

\newtheorem{case[theorem]}{Case}

\def\supp{\hbox{supp\,}}
\def\norm#1.#2.{\lVert#1\rVert_{#2}}

\title[Decay estimates for a class of dispersive equations]{Decay estimates for a class of dispersive equations with partial inverse-square potentials
}
\author{Jiabin Qian}
\author{Manli Song}
\thanks{Corresponding author: Manli Song}

\address{\endgraf School of Mathematics and Statistics, Northwestern Polytechnical University, Xi'an, Shaanxi 710129, China}\email{qjb330424@mail.nwpu.edu.cn}

\address{\endgraf School of Mathematics and Statistics, Northwestern Polytechnical University, Xi'an, Shaanxi 710129, China} \email{mlsong@nwpu.edu.cn}

\keywords{Partial inverse-square potential, Decay estimate, Strichartz estimate.}
\subjclass[2010]{Primary 42B37, 35Q40, 35Q41.}

\date{\today}
\begin{document}
	
	\maketitle

	\allowdisplaybreaks

\begin{abstract}
Let $\mathcal{L}_a=-\Delta_x-\Delta_y+\frac{a}{2}|x|^{-2}$ with $a>0$ denote the Schr\"odinger operator on $L^2(\mathbb{R}^2_x\times \mathbb{R}^n_y)$, which involves a singular partial inverse-square potential. The purpose of this manuscript is twofold. 	First, relying on the explicit representation for the spectral measure associated with
the operator $\mathcal{L}_a$ established by Zhang-Zhang [J. Geom. Anal. \textbf{35}(3), Paper No. 71, 27pp (2025)], we investigate the decay estimate for a class of dispersive semigroups of the form $e^{it\phi(\sqrt{\mathcal{L}_a})}$, where $\phi: \mathbb{R}^+ \to \mathbb{R}$ is a smooth function. To handle the technical difficulty arising from the inhomogeneity of the phase function $\phi$, we adopt the frequency localization and the stationary phase method. In the second part of the paper, we first derive boundary Strichartz estimates for the fractional Schr\"odinger operator $e^{it\mathcal{L}_a^\nu}$, $0<\nu\neq\frac{1}{2}$. As applications of the established decay estimates, we further obtain Strichartz estimates for some concrete wave equations associated with the operator $\mathcal{L}_a$, which corresponds to  $\phi(r)=r, r^2, r^2+r^4, \sqrt{1+r^2}, \sqrt{1+r^4}$, and $r^\mu,0<\mu\neq 1$. Most notably, our results unify and simplify existing dispersive estimates for the operator $\mathcal{L}_a$, while extending the relevant theory to more general scenarios.
\end{abstract}
	\tableofcontents

	\section{Introduction}
    \definecolor{deepgreen}{RGB}{0,95,45}
We consider the self-adjoint Schr\"odinger operator with a partial inverse-square potential on $L^2(\mathbb{R}^{2+n})$
\begin{equation*}
\mathcal{L}_a=-\Delta_x-\Delta_y+\frac{a}{2}|x|^{-2}, \;(x,y)\in\mathbb{R}^2\times\mathbb{R}^n,\; a>0.
\end{equation*}
This operator arises naturally in the study of many-body quantum systems and distinguishes itself from the classical inverse-square potential model. It possesses a rich mathematical structure: the potential exhibits stronger singularity localized exclusively in the $x$-variables, which breaks the full rotational symmetry of the system and invalidates a wide range of symmetry-based analytical methods conventionally adopted in related studies. Beyond its mathematical features, the operator $\mathcal{L}_a$ also carries profound physical implications via its intimate connection to singular two-body quantum systems. Specifically, under the coordinate transformations $x=\frac{\sqrt{2}}{2}(x_1-x_2)$ and $y=\frac{\sqrt{2}}{2}(x_1+x_2)$, the two-particle Hamiltonian $H_{V,2}$
\begin{equation*}
H_{V,2}=-\Delta_{x_1}-\Delta_{x_2}+a|x_1-x_2|^{-2},
\end{equation*}
can be exactly transformed into the operator $\mathcal{L}_a$ defined on $\mathbb{R}^4$, which corresponds precisely to the case $n=2$. This correspondence provides a crucial foundation for investigating wave propagation dynamics in many-body quantum systems with singular interaction potentials. For the operator $\mathcal{L}_a$, the Strichartz estimates for both Schr\"odinger and wave equations have been rigorously established in the prior work of Zhang-Zhang \cite{Zhang-Zhang}.

The main aim of this paper is to investigate decay estimates for a class of dispersive equations of the form	\begin{equation}\label{DispersiveEqua}
	\begin{cases}	i\partial_tu+\phi(\sqrt{\mathcal{L}_a}) u=0,\; (t,x,y)\in\mathbb{R}\times\mathbb{R}^{2+n},\\
	u(0,x,y)=f(x,y),
	\end{cases}
	\end{equation}
where $f:\mathbb{R}^{2+n} \to \mathbb C$, and $\phi: \mathbb{R}^+ \to \mathbb{R}$ is a smooth function and we denote by (C1)-(C4) the following assumptions on $\phi$:
\\

(C1)~There exists $m_1>0$ such that for any $\alpha \geqslant 2$ and $\alpha \in \mathbb{N}$,
\begin{equation*}
|\phi'(\lambda)| \sim \lambda^{m_1-1}  , \quad |\phi^{(\alpha)}(\lambda)| \lesssim \lambda^{m_1-\alpha},\quad \lambda \geqslant 1.
\end{equation*}

(C2)~There exists $m_2>0$ such that for any $\alpha \geqslant 2$ and $\alpha \in \mathbb{N}$,
\begin{equation*}
|\phi'(\lambda)| \sim \lambda^{m_2-1}  , \quad |\phi^{(\alpha)}(\lambda)| \lesssim \lambda^{m_2-\alpha},\quad 0<\lambda<1.
\end{equation*}

(C3)~There exists $\alpha_1>0$ such that
\begin{equation*}
|\phi''(\lambda)| \sim \lambda^{\alpha_1-2}, \quad \lambda \geqslant1.
\end{equation*}

(C4)~There exists $\alpha_2>0$ such that
\begin{equation*}
|\phi''(\lambda)| \sim \lambda^{\alpha_2-2}, \quad  0<\lambda<1.
\end{equation*}
Here we use a functional calculus to define $\phi(\sqrt{\mathcal{L}_a}) $ in the following form $$\phi(\sqrt{\mathcal{L}_a})f=\mathcal{F}^{-1}\left(\phi(\rho^2+|\eta|^2) \mathcal{F} f\right), $$
where $\mathcal{F}$ denotes the distorted Fourier transform associated with the operator $\mathcal{L}_a$, defined by \eqref{Distorted} in Section \ref{sec2}.  Notice that conditions (C1) and (C3) represent the homogeneous order of $\phi$ in high frequency, and conditions (C2) and (C4) reflect the homogeneous order of $\phi$ in low frequency. If $\phi$ satisfies conditions (C1) and (C3), then $\alpha_1\leq m_1$. Similarly, if $\phi$ satisfies (C2) and (C4), then $\alpha_2\geq m_2$.

Here, we want to emphasize that several dispersive wave equations associated with the operator $\mathcal{L}_a$ reduced to the type (\ref{DispersiveEqua}). 
For instance, the Schr\"{o}dinger equation corresponds to $\phi(r)=r^2$, the wave equation corresponds to $\phi(r)=r$, the fractional Schr\"{o}dinger equation corresponds to $\phi(r)=r^\mu$, $0<\mu\neq 1$, the fourth-order Schr\"{o}dinger equation corresponds to $\phi(r)=r^2+r^4$, the beam equation $\phi(r)=\sqrt{1+r^4}$, and the Klein-Gordon equation corresponds to $\phi(r)=\sqrt{1+r^2}$, among others.  
 
Dispersive inequalities for evolution equations, such as the Schr\"odinger and wave equations, play a significant role in the study of semilinear and quasilinear problems, which naturally appear in several physical applications. Proving dispersion means estimating the decay in time on the evolution group associated with the free equation.  In recent times, dispersive properties of evolution equations have become a crucial tool in the study of a variety of questions, including local and global existence for nonlinear equations, well-posedness of Cauchy problems
for nonlinear equations in Sobolev spaces of low order, scattering theory, and many others. In many circumstances, the essential step in proving this time decay is based on the application of a stationary phase theorem on an (approximate) representation of the solution.  Dispersion phenomena, when combined with an abstract functional analysis argument, famously known as the $TT^*$-argument (see \cite{KT}), offer a range of estimates involving space-time Lebesgue norms. These inequalities, also known as Strichartz estimates, have evolved into a fundamental and amazing tool in the study of nonlinear partial differential equations over the last few decades; see \cite{Bahouri-app} and references therein.

 Strichartz estimates can also be interpreted as Fourier restriction estimates, which play a vital role in classical harmonic analysis and are closely linked to arithmetic combinatorics. Considerable attention has been devoted by several researchers to derive Strichartz estimates in different frameworks. For example, in the Euclidean setting,  these estimates have been proved for several dispersive equations, such as the wave equation and Schr\"{o}dinger equation, for instance, see the pioneering works \cite{GV, KT, Str}. The theory of Strichartz estimates has also been extended in the non-Euclidean frameworks. 
For example, see \cite{Song-Tan, BGX2000, H2005, FMV1, FV,  LS2014, SZ, Song2026, Song2016,  BKG, BBG2021, SY2023} for Strichartz estimates on H-type groups and the more generality of stratified step $2$  Lie groups, \cite{BDDM2019, DPR2010, Feng-Song, FSW, NR2005, R2008, S2013, Mejjaoli2008-1, Mejjaoli2009, Mejjaoli2013, Ben said, Ratna3, MS, FMSW} for metric measure spaces, 
\cite{AP2009, AP2014, APV2012} for hyperbolic spaces, \cite{B1993, BGT2004} for compact Riemannian manifolds,  and \cite{ILP2014} for bounded domains.

  In 1977, Strichartz \cite{Str}  derived the priori estimates for  solutions to classical Schr\"odinger, wave and Klein-Gordon equations in the space-time norm by employing the famous Fourier restriction theorem of Stein and Tomas \cite{Stein1984, T1, T2}. However, later, it was improved by Ginibre-Velo \cite{GV} using a standard duality argument along with the dispersive estimate   of the form 
 \begin{align}\label{dispersive estimate}\left\|e^{i t \phi(\sqrt{-\Delta})} u_0\right\|_X \lesssim|t|^{-\theta}\left\|u_0\right\|_{X^*},\end{align}
where $X^*$ is the dual space of $X$. In 2008, Guo-Peng-Wang \cite{GPW2008} used a unified way to study the decay for a class of dispersive semigroups $e^{it\phi(\sqrt{-\Delta})}$ on $\mathbb{R}^n$.  They assumed that $\phi: \mathbb{R}^+ \to \mathbb{R}$ is smooth satisfying (C1)-(C4) and obtained several decay estimates in time for the dispersive semigroup $e^{it\phi(\sqrt{-\Delta})}$ by introducing the Littlewood-Paley projector operator; see  \cite[Theorem 1]{GPW2008}.

Recently, Zhang-Zhang \cite{Zhang-Zhang} investigated the Strichartz estimates for solutions of Schr\"odinger and wave equations with partial inverse-square potentials.
\begin{theorem}[\cite{Zhang-Zhang}]\label{Schrodinger-Strichartz} Fix $n\geq1$. Let $u$ be a solution of the Schr\"odinger equation 
\begin{equation}\label{Schrodinger}
	\begin{cases}	i\partial_tu+\mathcal{L}_a u=0,\; (t,x,y)\in\mathbb{R}\times\mathbb{R}^{2+n},\\
	u(0,x,y)=f(x,y).
	\end{cases}
	\end{equation}
Then there exists a constant $C>0$ such that
\begin{equation}\label{Str-schr}
\|u\|_{L_t^q(\mathbb{R},L_{x,y}^r(\mathbb{R}^{2+n}))}=\|e^{it\mathcal{L}_a}f\|_{L_t^q(\mathbb{R},L_{x,y}^r(\mathbb{R}^{2+n}))}\leq C\|f\|_{L_{x,y}^2(\mathbb{R}^{2+n})},
\end{equation}
where 
\begin{equation}\label{admissible}
2\leq q\leq \infty, 2\leq r<\infty \text{ and } \frac{2}{q}=(2+n)\left(\frac{1}{2}-\frac{1}{r}\right).
\end{equation}
\end{theorem}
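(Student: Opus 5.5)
The estimate \eqref{Str-schr} will follow from the abstract Keel--Tao machinery \cite{KT}: it suffices to prove the energy bound $\|e^{it\mathcal{L}_a}f\|_{L^2}=\|f\|_{L^2}$ and the dispersive bound
\begin{equation*}
\|e^{it\mathcal{L}_a}f\|_{L^\infty(\mathbb{R}^{2+n})}\lesssim |t|^{-\frac{2+n}{2}}\|f\|_{L^1(\mathbb{R}^{2+n})},\qquad t\neq 0 .
\end{equation*}
The energy bound is immediate from self-adjointness of $\mathcal{L}_a$ and the spectral theorem. Keel--Tao with decay rate $\sigma=\frac{2+n}{2}$ then gives every pair in \eqref{admissible}, including the endpoint $q=2$ when $2+n\geq3$, which always holds since $n\geq1$. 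Interpolating the two bounds gives the $L^{r'}\to L^r$ decay for $2\le r\le\infty$.

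For the dispersive bound I would separate variables. Because $[\Delta_y,\,-\Delta_x+\frac a2|x|^{-2}]=0$, we have
\begin{equation*}
e^{it\mathcal{L}_a}=e^{-it\Delta_y}\otimes e^{itL_x},\qquad L_x=-\Delta_x+\tfrac a2|x|^{-2}\ \text{on } \mathbb{R}^2 .
\end{equation*}
The kernel of $e^{-it\Delta_y}$ is the free Schr\"odinger kernel, bounded by $|t|^{-n/2}$. It therefore suffices to show that the kernel of $e^{itL_x}$ is bounded by $C|t|^{-1}$ uniformly in $x,x'\in\mathbb{R}^2$; the kernel bound of the tensor product then follows.

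To prove the $L_x$ bound, I would expand in polar coordinates with angular Fourier modes $e^{ik\theta}$. This reduces $L_x$ to the Bessel operators of order $\nu_k=\sqrt{k^2+a/2}>0$. For these, the standard Weber formula gives the kernel
\begin{equation*}
K_t(r,\theta;r',\theta')=\frac{1}{4\pi i t}\,e^{-\frac{r^2+r'^2}{4it}}\sum_{k\in\mathbb{Z}} e^{ik(\theta-\theta')}\,e^{-i\pi\nu_k/2}\,J_{\nu_k}\!\left(\frac{rr'}{2t}\right),
\end{equation*}
up to the normalization of $t$. This is the structure encoded in the spectral measure of \cite{Zhang-Zhang}. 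Everything then reduces to a uniform bound
\begin{equation*}
\sup_{z>0,\ \theta}\Big|\sum_{k} e^{ik\theta}e^{-i\pi\nu_k/2}J_{\nu_k}(z)\Big|<\infty .
\end{equation*}

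This uniform bound is the main obstacle, since the Bessel terms are not absolutely summable uniformly in $z$. I would use the Schl\"afli integral representation of $J_\nu$, which splits it into a term $\frac1{2\pi}\int_{-\pi}^{\pi}e^{iz\sin s-i\nu s}\,ds$ and a tail $\frac{\sin(\nu\pi)}{\pi}\int_0^\infty e^{-z\sinh s-\nu s}\,ds$. I would then write $\nu_k=|k|+(\nu_k-|k|)$, where $\nu_k-|k|=O(1/|k|)$ and the $k$-derivatives of this difference decay.
\begin{enumerate}[(i)]
\item For the first term, summation by parts in $k$ compares the sum with the free case $a=0$. There the sum collapses via the generating function $\sum_k e^{ik\theta}J_k(z)=e^{iz\sin\theta}$ to a unimodular quantity, which is bounded.
\item For the tail, summing $\sum_k e^{-\nu_k s}\sin(\nu_k\pi)e^{ik\theta}$ gives a bound $\lesssim\min(1,s^{-1})$ after summation by parts. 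The remaining integral $\int_0^\infty e^{-z\sinh s}\min(1,1/s)\,ds$ needs care near $s=0$. There the oscillation in $k$ and the behavior of $\sin(\nu_k\pi)\to\sin(\pi|k|+O(1/k))$ supply an extra factor $O(1/k)$, giving a log-free bound.
\end{enumerate}
Combining these estimates yields the $|t|^{-1}$ bound for $e^{itL_x}$, hence the dispersive estimate, and Keel--Tao concludes the proof.
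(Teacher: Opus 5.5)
Your strategy is sound, and it is essentially the original Zhang--Zhang proof. This paper does not redo that proof; it only recalls it in Proposition~\ref{Prop 6.2}. In that argument, the kernel of $e^{it\mathcal{L}_a}$ is written as the free kernel plus an error built from the Schl\"afli-type sums $A_a$, $B_a$ of \eqref{A-B}. The global bound $|t|^{-\frac{n+2}{2}}$ then follows from the uniform integrability \eqref{6.20}, and Keel--Tao finishes. Splitting off $e^{-it\Delta_y}$ as a tensor factor, and using the Schl\"afli formula for $J_\nu(z)$, $z>0$, instead of the one for $I_\nu$ at imaginary argument, are cosmetic changes.

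The paper's own derivation of the statement is different: it is the corollary of Theorem~\ref{Intermediate-fractional} with $\nu=1$, $s=0$.
\begin{enumerate}
\item[(a)] It never writes a Schr\"odinger kernel. It uses the spectral measure \eqref{spectral} to get only the frequency-localized decay of Theorem~\ref{ResultTime}, namely $\|U_k(t)\|_{L^1\to L^\infty}\lesssim 2^{k(n+2)}(1+2^{2k}|t|)^{-\frac{n+2}{2}}$.
\item[(b)] It applies the semiclassical Keel--Tao Proposition~\ref{Keel-Tao argument} with $h=2^{-2k}$ on each dyadic block.
\item[(c)] It reassembles the blocks with the square function of Proposition~\ref{LP inequality}.
\end{enumerate}
Your route is shorter for the specific phase $\lambda^2$ and needs no Littlewood--Paley theory. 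The paper's route works for general inhomogeneous $\phi$, where no closed-form kernel exists, and gives the whole $\dot{H}^s_a$ family at once. Its price is the square-function estimate (hence $r<\infty$). Both rest on the same input \eqref{6.20}.

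Step (ii), as written, is wrong and needs repair: the bound $\lesssim\min(1,s^{-1})$ for the tail sum is false near $s=0$ and too weak at $s=\infty$.
\begin{enumerate}
\item[(1)] Near $s=0$: since $e^{-i\pi\nu_k/2}\sin(\nu_k\pi)=i^{|k|}\frac{\pi a}{4|k|}+O(|k|^{-2})$, the sum equals $\frac{\pi a}{4}\sum_{k\neq0}\frac{i^{|k|}e^{ik\theta}}{|k|}e^{-|k|s}$ plus a bounded remainder. For $\theta=\pm\pi/2$ this grows like $\log(1/s)$ as $s\to0$.
\item[(2)] Near $s=\infty$: a mere $s^{-1}$ decay would give $\int_0^\infty e^{-z\sinh s}\min(1,s^{-1})\,ds\sim\log\log(1/z)$, which is unbounded as $z\to0^+$.
\item[(3)] What is true, and what you need, is the analogue of \eqref{D1-D2}. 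The sum is $\lesssim 1+\log(1/s)$ for $s\le1$ and $\lesssim e^{-\min(1,\sqrt{a/2})\,s}$ for $s\ge1$. So its $L^1(ds)$ norm is bounded uniformly in $\theta$, and since $0<e^{-z\sinh s}\le1$, the tail contribution is bounded by that norm.
\end{enumerate}

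Step (i) has the same issue. The difference from the free sum, $\sum_k e^{ik\theta}(e^{-i\nu_k(s+\pi/2)}-e^{-i|k|(s+\pi/2)})$, is not bounded: it has logarithmic singularities where $s+\frac{\pi}{2}\equiv\pm\theta \pmod{2\pi}$. Use summation by parts to bound its $L^1(ds)$ norm uniformly in $\theta$; this suffices because $e^{iz\sin s}$ is unimodular.

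With these two uniform $L^1$ bounds the argument closes. You also need the routine justification for exchanging $\sum_k$ with the $s$-integrals, since each Schl\"afli piece is only $O(1/|k|)$ on its own.
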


\begin{theorem}[\cite{Zhang-Zhang}] \label{wave-Strichartz} Fix $n\geq1$. Let $u$ be a solution of the wave equation 
\begin{equation}\label{Schrodinger}
	\begin{cases}	\partial^2_tu+\mathcal{L}_a u=0,\; (t,x,y)\in\mathbb{R}\times\mathbb{R}^{2+n},\\
	u(0,x,y)=f(x,y),\;\partial_tu(0,x,y)=g(x,y).
	\end{cases}
	\end{equation}
Then there exists a constant $C>0$ such that
\begin{equation*}
\|u\|_{L_t^q(\mathbb{R},L_{x,y}^r(\mathbb{R}^{2+n}))}\leq C\left(\|f\|_{\dot{H}_a^s(\mathbb{R}^{2+n})}+\|g\|_{\dot{H}_a^{s-1}(\mathbb{R}^{2+n})}\right),
\end{equation*}
where $s\geq0$, $2\leq q\leq \infty$, $2\leq r<\infty$, $\frac{2}{q}\leq (1+n)\left(\frac{1}{2}-\frac{1}{r}\right)$, $s=(2+n)\left(\frac{1}{2}-\frac{1}{r}\right)-\frac{1}{q}$ and $\dot{H}_a^s(\mathbb{R}^{2+n})=\mathcal{L}_a^{-\frac{s}{2}}L^2(\mathbb{R}^{2+n})$ 
 is the homogeneous Sobolev space associated with the operator $\mathcal{L}_a$.
\end{theorem}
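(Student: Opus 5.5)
The plan is to reduce the wave Strichartz estimate to a frequency-localized dispersive estimate for the half-wave group $e^{it\sqrt{\mathcal{L}_a}}$, and then run the Keel--Tao $TT^*$ machinery together with Littlewood--Paley square functions adapted to $\mathcal{L}_a$.

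First, write the solution as
\begin{equation*}
u=\cos(t\sqrt{\mathcal{L}_a})f+\frac{\sin(t\sqrt{\mathcal{L}_a})}{\sqrt{\mathcal{L}_a}}g .
\end{equation*}
This reduces the estimate to bounding $e^{\pm it\sqrt{\mathcal{L}_a}}h$ in $L^q_tL^r_{x,y}$ by $\|h\|_{\dot H^s_a}$. Second, fix a Littlewood--Paley partition $\varphi_j(\sqrt{\mathcal{L}_a})$ via the spectral calculus, where $\varphi$ is supported in $[1/2,2]$. Using the explicit spectral measure, i.e.\ the distorted Fourier transform, I would establish the frequency-localized dispersive estimate
\begin{equation*}
\|e^{it\sqrt{\mathcal{L}_a}}\varphi_j(\sqrt{\mathcal{L}_a})\|_{L^1\to L^\infty}\lesssim 2^{j(2+n)}(1+2^j|t|)^{-\frac{1+n}{2}} .
\end{equation*}
The kernel is built from Bessel functions $J_{\nu_k}(\rho|x|)J_{\nu_k}(\rho|x'|)e^{ik(\theta-\theta')}$ in $x$, where $\nu_k=\sqrt{k^2+a/2}$, combined with plane waves in $y$. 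By scaling it suffices to take $j=0$. The estimate then follows by writing the $x$-part through a Schläfli-type integral representation of $\sum_k e^{ik\theta}J_{\nu_k}J_{\nu_k}$. This yields a geometric term, which behaves like the free kernel on $\mathbb{R}^2$, and a diffractive term controlled by the perturbation $\nu_k-|k|$. One then applies stationary phase in the combined $(\rho,\eta)$ variable, whose phase $t\sqrt{\rho^2+|\eta|^2}$ has rank-$(1+n)$ curvature. Interpolating this bound with the $L^2$ bound gives the localized $L^{r'}\to L^r$ decay.

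Third, apply the abstract Keel--Tao theorem with decay rate $\sigma=\frac{1+n}{2}$ to each dyadic piece, after rescaling $t\mapsto 2^{-j}t$. This gives
\begin{equation*}
\|e^{it\sqrt{\mathcal{L}_a}}\varphi_j h\|_{L^q_tL^r}\lesssim 2^{js}\|\varphi_j h\|_{L^2}
\end{equation*}
for $\frac{2}{q}\le (1+n)(\frac12-\frac1r)$ with $s$ as stated; the non-sharp range follows from the sharp one by Bernstein/Sobolev in the spatial variable. Summing the pieces requires square-function estimates $\|f\|_{L^r}\sim\|(\sum_j|\varphi_j(\sqrt{\mathcal{L}_a})f|^2)^{1/2}\|_{L^r}$ for $2\le r<\infty$. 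I would obtain these from Gaussian heat kernel upper bounds for $e^{-t\mathcal{L}_a}$, which follow from $a>0$ via the Trotter formula or domination by the free heat kernel, together with spectral multiplier theory. Then Minkowski's inequality (since $q,r\ge2$) lets me sum in $\ell^2$.

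The main obstacle is the localized dispersive kernel bound. In particular, the diffractive part coming from non-integer orders $\nu_k$ must be shown to decay as fast as the geometric wave in the $(1+n)$-dimensional curvature sense, uniformly in $(x,x')$ including near $x=0$. I would first attempt this via the Cheeger--Taylor style representation combined with the $y$-Fourier integral, treating $|x|$ small separately with Bessel asymptotics $J_\nu(r)\lesssim r^\nu$.
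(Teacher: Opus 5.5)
Your plan is essentially the paper's own route; the paper reproves this statement as Theorem~\ref{Intermediate-wave} with $\phi(\lambda)=\lambda$. It derives the dyadic bound $2^{k(n+2)}(1+2^k|t|)^{-\frac{n+1}{2}}$ from Zhang--Zhang's Schl\"afli-type spectral-measure formula, applies Keel--Tao to each piece (the semiclassical version with $h=2^{-k}$, which already covers the non-sharp range, where you use rescaling plus Bernstein), and then sums with the $\mathcal{L}_a$ square function and Minkowski.

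You make the diffractive part harder than it needs to be. In that formula the non-free terms are $s$-averages, with weights $A_a,B_a$ bounded in $L^1_s$, of the free $\mathbb{R}^{2+n}$ spectral measure evaluated at the pseudo-distances $|\vec{n}_s^1|,|\vec{n}_s^2|$, so uniformity near $x=0$ is automatic. Also, the $\frac{n+1}{2}$ decay comes from the decay of the $S^{n+1}$ symbol $a_\pm$ together with integration by parts in $\lambda$ off the light cone, not from stationary phase in $(\rho,\eta)$: the cone $\sqrt{\rho^2+|\eta|^2}$ in those $n+1$ variables has Hessian rank only $n$.
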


Motivated by the work of Zhang-Zhang \cite{Zhang-Zhang}, this paper aims to establish decay estimates for a class of dispersive equations \eqref{DispersiveEqua} associated with the operator $\mathcal{L}_a$. For a homogeneous phase function $\phi$ of order $m$, that is, $\phi(\lambda r)=$ $\lambda^m \phi(r)$ for all $ \lambda>0$, dispersive estimates can be derived straightforwardly by using a theorem of Littman and dyadic decomposition. Nevertheless, the scenario becomes substantially intricate for an inhomogeneous function $\phi$, since the scaling parameters cannot be efficiently decoupled from the time variable. Since the phases we consider are not necessarily homogeneous, we adopt the Littlewood–Paley decomposition (see \cite{GPW2008}) to overcome this technical obstacle. This approach enables us to precisely distinguish low and high-frequency behaviors across in different scales.

Let $\psi\in C_c^\infty(\mathbb{R})$ such that $0\leq \psi\leq 1$, $\supp \psi\in [1/2,1]$, and 
\begin{equation*}
\sum_{k\in \mathbb{Z}} \psi(2^{-k}\lambda)=1,\quad \varphi(\lambda):=\sum_{k\leq 0} \psi(2^{-k}\lambda).
\end{equation*}
\begin{definition}[Besov spaces associated with $\mathcal{L}_a$]
For $s\in\mathbb{R}$ and $1\leq p,r\leq\infty$, $\dot{B}^{s}_{p,r,a}(\mathbb{R}^{2+n})$ is the homogeneous Besov space associated with $\mathcal{L}_a$, defined as the set of tempered distributions $f \in \mathcal{S}'\left(\mathbb{R}^{2+n}\right)$ such that $\|f\|_{\dot{B}^{s}_{p,r,a}}<\infty$ with
\begin{equation*}
\|f\|_{\dot{B}^{s}_{p,r,a}(\mathbb{R}^{2+n})}=\begin{cases}\left(\sum\limits_{k\in\mathbb{Z}}^\infty2^{ksr}\|\psi(2^{-k}\sqrt{\mathcal{L}_a})f\|_{L^p(\mathbb{R}^{2+n})}^r\right)^{\frac{1}r},\quad 1\leq r<\infty\\
 \sup\limits_{k\in\mathbb{Z}} 2^{ks}\|\psi(2^{-k}\sqrt{\mathcal{L}_a})f\|_{L^p(\mathbb{R}^{2+n})}\,\;\quad\quad,\quad r=\infty.
 \end{cases}
	\end{equation*} 
and $B^{s}_{p,r,a}(\mathbb{R}^{2+n})$ is the inhomogeneous Besov space associated with $\mathcal{L}_a$, defined as the set of tempered distributions $f \in \mathcal{S}'\left(\mathbb{R}^{2+n}\right)$ such that $\|f\|_{B^{s}_{p,r,a}}<\infty$ with
\begin{equation*}
\|f\|_{B^{s}_{p,r,a}(\mathbb{R}^{2+n})}=\|\varphi(\sqrt{\mathcal{L}_a})f\|_{L^p(\mathbb{R}^{2+n})}+\begin{cases}\left(\sum\limits_{k=1}^\infty2^{ksr}\|\psi(2^{-k}\sqrt{\mathcal{L}_a})f\|_{L^p(\mathbb{R}^{2+n})}^r\right)^{\frac{1}r},\quad 1\leq r<\infty\\
 \sup\limits_{k\geq1} 2^{ks}\|\psi(2^{-k}\sqrt{\mathcal{L}_a})f\|_{L^p(\mathbb{R}^{2+n})}\,\;\quad\quad,\quad r=\infty.
 \end{cases}
	\end{equation*}
\end{definition}
\begin{definition}[Sobolev spaces associated with $\mathcal{L}_a$]For $s \in\mathbb{R}$, the homogeneous and inhomogeneous Sobolev space associated with $\mathcal{L}_a$ are denoted respectively by $\dot{H}_a^s(\mathbb{R}^{2+n})=\mathcal{L}_a^{-\frac{s}{2}} L^2(\mathbb{R}^{2+n})$ and $H_a^s(\mathbb{R}^{2+n})=\mathcal{L}_a^{-\frac{s}{2}} L^2(\mathbb{R}^{2+n})\cap  L^2(\mathbb{R}^{2+n})$, with the norm
 \begin{align*}
	\|f\|_{\dot{H}_a^s(\mathbb{R}^{2+n})}&=\|\mathcal{L}_a^{\frac{s}{2}}f\|_{L^2(\mathbb{R}^{2+n})}=\|f\|_{\dot{B}^{s}_{2,2,a}(\mathbb{R}^{2+n})},\\
    \|f\|_{H_a^s(\mathbb{R}^{2+n})}&=\|(1+\mathcal{L}_a)^{\frac{s}{2}}f\|_{L^2(\mathbb{R}^{2+n})}=\|f\|_{B^{s}_{2,2,a}(\mathbb{R}^{2+n})}.
	\end{align*}
\end{definition}

Then the main results of this paper are as follows:
\begin{theorem}\label{ResultTime}
	Assume $\phi:\mathbb{R}^+ \to \mathbb{R}$ is smooth. Let  $U_k(t)=e^{it\phi(\sqrt{\mathcal{L}_a})}\psi(2^{-k}\sqrt{\mathcal{L}_a}), \forall k\in\mathbb{Z}$ and $U^{low}(t)=e^{it\phi(\sqrt{\mathcal{L}_a})}\varphi(\sqrt{\mathcal{L}_a})=\sum\limits_{k\leq0} U_k(t)$.   Then the subsequent outcomes are valid.
 \begin{enumerate}
 \item[\textbf{(A)}]   For $k \geq 1$, assume that $\phi$ satisfies (C1), then
	\begin{equation*}\label{res3-2}
	\|U_k(t) f\|_{L^\infty(\mathbb{R}^{2+n})} \lesssim |t|^{-\theta}2^{k\left(n+2-m_1\theta\right)}\|f\|_{L^1(\mathbb{R}^{2+n})},\,0 \leq \theta \leq \frac{n+1}{2}.
	\end{equation*}
 In addition, if $\phi$ satisfies (C3), then
	\begin{equation*}\label{res3-3}
	\|U_k(t) f\|_{L^\infty(\mathbb{R}^{2+n})} \lesssim |t|^{-\frac{n+1+\theta}{2}}2^{k\left(n+2-\frac{m_1(n+1+\theta)}{2}-\frac{\theta(\alpha_1-m_1)}{2}\right)}\|f\|_{L^1(\mathbb{R}^{2+n})},\,   0 \leq \theta \leq 1.
	\end{equation*}
 \item[\textbf{(B)}] For $k\leq 0$, suppose $\phi$ satisfies (C2), then
	\begin{equation*}
	\|U_k(t) f\|_{L^\infty(\mathbb{R}^{2+n})} \lesssim |t|^{-\theta}2^{k\left(n+2-m_2\theta\right)}\|f\|_{L^1(\mathbb{R}^{2+n})},\,0 \leq \theta \leq \frac{n+1}{2}.
	\end{equation*}
In addition, if $\phi$ satisfies (C4), then
	\begin{equation*}
	\|U_k(t) f\|_{L^\infty(\mathbb{R}^{2+n})} \lesssim |t|^{-\frac{n+1+\theta}{2}}2^{k\left(n+2-\frac{m_2(n+1+\theta)}{2}-\frac{\theta(\alpha_2-m_2)}{2}\right)}\|f\|_{L^1(\mathbb{R}^{2+n})},\,   0 \leq \theta \leq 1.
	\end{equation*}
\item[\textbf{(C)}] If $\phi$ satisfies (C2), then
	\begin{equation}\label{res-sum1}
 \|U^{low}(t) f\|_{L^\infty(\mathbb{R}^{2+n})} \lesssim (1+|t|)^{-\theta}\|f\|_{L^1(\mathbb{R}^{2+n})},\,\theta=\min\left(\frac{n+2}{m_2},\frac{n+1}{2}\right).
	\end{equation}
In addition, if $\phi$ satisfies (C4) and $\alpha_2=m_2$, then
	\begin{equation}\label{res-sum2}
	\|U^{low}(t) f\|_{L^\infty(\mathbb{R}^{2+n})}  \lesssim (1+|t|)^{-\theta}\|f\|_{L^1(\mathbb{R}^{2+n})},\,\theta=\min\left(\frac{n+2}{m_2},\frac{n+2}{2}\right).
	\end{equation}
	 \end{enumerate}
\end{theorem}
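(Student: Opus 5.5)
The plan is to reduce the kernel of $U_k(t)$ to Euclidean oscillatory integrals in dimension $n+2$, to which the one-dimensional stationary phase analysis of Guo--Peng--Wang \cite{GPW2008} applies. The reduction uses the explicit spectral measure of $\mathcal{L}_a$ from Zhang--Zhang \cite{Zhang-Zhang}. In polar coordinates $x=r(\cos\theta,\sin\theta)$, the distorted Fourier transform diagonalizes $\mathcal{L}_a$ by the Fourier transform in $y$ together with Hankel transforms of order $\nu_j=\sqrt{j^2+a/2}$ on the angular modes $e^{ij\theta}$. The kernel of $U_k(t)$ is therefore
\begin{equation*}
K_k(t;x,y,x',y')=\sum_{j\in\mathbb{Z}}\frac{e^{ij(\theta-\theta')}}{2\pi}\int_{\mathbb{R}^n}\int_0^\infty e^{it\phi(\sqrt{\rho^2+|\eta|^2})}\psi\big(2^{-k}\sqrt{\rho^2+|\eta|^2}\big)J_{\nu_j}(r\rho)J_{\nu_j}(r'\rho)\,\rho\,d\rho\, e^{i(y-y')\cdot\eta}\,\frac{d\eta}{(2\pi)^n}.
\end{equation*}
For the angular sum of $J_{\nu_j}(r\rho)J_{\nu_j}(r'\rho)$, I would use the Schläfli-type integral representation employed in \cite{Zhang-Zhang}. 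It writes this sum as a finite average over $s\in[0,\pi]$ plus an integral over $s\in(0,\infty)$ with a weight $\kappa_a(s,\theta-\theta')$, where each piece involves $J_0(\rho\, d)$ for a ``distance''
\begin{equation*}
d=\sqrt{r^2+r'^2-2rr'\cos s}\quad\text{or}\quad d=\sqrt{r^2+r'^2+2rr'\cosh s}.
\end{equation*}
The point to extract from \cite{Zhang-Zhang} is that the weight satisfies $\sup_{\theta,\theta'}\int_0^\infty|\kappa_a(s,\theta-\theta')|\,ds<\infty$.

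After inserting this representation, the $(\rho,\eta)$-integral becomes exactly the free $(n+2)$-dimensional kernel
\begin{equation*}
I_k(t,R)=\int_{\mathbb{R}^{n+2}}e^{it\phi(|\xi|)}\psi(2^{-k}|\xi|)e^{iz\cdot\xi}\,d\xi,\qquad |z|=R=\sqrt{d^2+|y-y'|^2},
\end{equation*}
since $\int J_0(\rho d)e^{i(y-y')\eta}\cdots$ is the radial Fourier integral in $\mathbb{R}^{2}\times\mathbb{R}^n$. Hence $|K_k|\lesssim\sup_{R\ge0}|I_k(t,R)|$, uniformly in all variables. For $\sup_R|I_k(t,R)|$ I would redo the estimate of \cite[Theorem 1]{GPW2008}. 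Rescale $\xi\to2^k\xi$ to get $2^{k(n+2)}\int e^{it\phi(2^k|\xi|)}\psi(|\xi|)e^{i2^kz\cdot\xi}d\xi$. Then write the angular part through the Bessel function $J_{n/2}$ and its asymptotic expansion, and integrate by parts in the radial variable.
\begin{enumerate}
\item[(i)] Under (C1) or (C2), the phase $t\phi(2^k\lambda)\pm2^kR\lambda$ has $\lambda$-derivative of size $|t|2^{km}$ away from the stationary region. Combining this with the $(2^kR)^{-(n+1)/2}$ decay of the Bessel factor gives $|t|^{-\theta}2^{k(n+2-m\theta)}$ for $0\le\theta\le\frac{n+1}{2}$.
\item[(ii)] Under (C3) or (C4), van der Corput with second derivative $\sim|t|2^{k\alpha}$ gains a further $(|t|2^{k\alpha})^{-1/2}$ at the stationary point $2^kR\sim|t|2^{km}$. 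Interpolating with (i) yields the exponent $n+2-\frac{m(n+1+\theta)}{2}-\frac{\theta(\alpha-m)}{2}$.
\end{enumerate}
This establishes (A) and (B), with the range of $k$ only selecting which of the pairs (C1)/(C3) or (C2)/(C4) is used.

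For (C) I would sum (B) over $k\le0$. For $|t|\le1$, take $\theta=0$, which gives $\sum_{k\le0}2^{k(n+2)}\lesssim1$. For $|t|>1$, split at $2^{k_0}\sim|t|^{-1/m_2}$. For $k\le k_0$ use $\theta=0$, contributing $2^{k_0(n+2)}=|t|^{-(n+2)/m_2}$. For $k>k_0$ use $\theta=\frac{n+1}{2}$. If $n+2-m_2\frac{n+1}{2}<0$ this sum is dominated by $k=k_0$ and again gives $|t|^{-(n+2)/m_2}$; otherwise it is dominated by $k=0$ and gives $|t|^{-(n+1)/2}$. In the borderline equality case one picks the two sides $\theta=\theta^*\pm\epsilon$ as usual. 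This yields \eqref{res-sum1}. For \eqref{res-sum2}, with $\alpha_2=m_2$ the second bound in (B) at $\theta=1$ reads $|t|^{-\frac{n+2}{2}}2^{k(n+2-m_2\frac{n+2}{2})}$, and the same dyadic splitting gives decay $\min\big(\frac{n+2}{m_2},\frac{n+2}{2}\big)$.

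I expect the main obstacle to be the reduction step, namely justifying that the spectral-measure representation turns $K_k$ into an $L^1(ds)$-average of the Euclidean kernels $I_k(t,R(s))$ with coefficients bounded uniformly in $(r,r',\theta,\theta')$. This requires interchanging the angular sum, the $s$-integral and the frequency integral, which is justified by the compact frequency support of $\psi(2^{-k}\cdot)$. It also requires checking that the $(\rho,\eta)$-integral of $J_0(\rho d)$ is exactly the $(n+2)$-dimensional radial Fourier integral. Once this holds, the estimates are uniform in $R$, so the distorted geometry (including the $\cosh s$ distances) plays no further role.
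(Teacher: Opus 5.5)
Your reduction and your treatment of (A) and (B) follow the paper. Proposition~\ref{Prop 6.3} together with \eqref{6.20} reduces everything, uniformly in $R$, to the free $(n+2)$-dimensional localized kernel $\int_0^\infty e^{it\phi(\lambda)}\psi(2^{-k}\lambda)\lambda^{n+1}a_\pm(\lambda R)e^{\pm i\lambda R}\,d\lambda$. The same integration-by-parts and van der Corput case analysis then applies.

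The gap is in (C), at the borderline exponents. Summing the bounds of (B) as black boxes over $k_0<k\le 0$ only works when the power of $2^k$ at the largest admissible $\theta$ is nonzero. The borderline cases are $m_2=\frac{2(n+2)}{n+1}$ in \eqref{res-sum1} and $m_2=\alpha_2=2$ in \eqref{res-sum2}. The latter covers $\phi(r)=r^2$, $r^2+r^4$ and $\sqrt{1+r^2}$, which are exactly the low-frequency bounds used later in Section~\ref{sec5}. In these cases every term with $k_0<k\le0$ has size $|t|^{-\theta^*}$, and there are $\sim\log|t|$ of them, so you only get $|t|^{-\theta^*}\log|t|$.

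Your remedy ``take $\theta=\theta^*\pm\epsilon$'' is not available. Part (B) gives time decay at most $|t|^{-\frac{n+1}{2}}$ in the first bound and $|t|^{-\frac{n+2}{2}}$ in the second. In the borderline case this maximal exponent is exactly $\theta^*$, so no bound with $\theta^*+\epsilon$ exists for the range $k>k_0$.

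The missing idea is to sum over $k$ before taking the supremum in space, as the paper does. That is, estimate $\sup_{(x,y)}\sum_{k\le0}2^{k(n+2)}|\mathcal{I}_k^{\pm,1}(x,y)|$ rather than $\sum_{k\le0}2^{k(n+2)}\sup_{(x,y)}|\mathcal{I}_k^{\pm,1}(x,y)|$.

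\begin{enumerate}
\item For fixed $R=|(x,y)|$, the stationary condition $R\sim|t|2^{k(m_2-1)}$ holds for only $O(1)$ values of $k$, because $m_2\neq 1$; in both borderline cases in fact $m_2\ge 2$.
\item On those scales with $2^kR>1$ one has $2^{k_0m_2}|t|\sim 2^{k_0}R>1$. The argument of \eqref{sim}, with the van der Corput bound when (C4) holds and $\alpha_2=m_2$, then gives a contribution $\lesssim|t|^{-\frac{n+2}{m_2}}$.
\item Every other scale, including those with $2^kR\le1$, satisfies $|\mathcal{I}_k^{\pm,1}|\lesssim(2^{km_2}|t|)^{-\alpha}$ for arbitrary $\alpha$. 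Lemma~\ref{Sum} with $\alpha>\frac{n+2}{m_2}$ sums these to $|t|^{-\frac{n+2}{m_2}}$ with no logarithm.
\end{enumerate}

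Your reduction is linear in the spectral multiplier, so this fits into your framework once you work with $\sup_R\sum_k|I_k(t,R)|$ in place of $\sum_k\sup_R|I_k(t,R)|$.
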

 We shall prove the above result in Section \ref{Proof of Main}.  In this paper, we use the Littlewood-Paley decomposition associated with the operator $\mathcal{L}_a$ to better understand the regularity required at low and high frequencies to obtain a time decay. From the spectral measure associated with the operator $\mathcal{L}_a$, the decay estimates mentioned above are reduced to a list of oscillatory integrals. In order to estimate these integrals, we rely on the van der Corput lemma. 

From \eqref{Str-schr}, we deduce the following frequency-localized strichartz estimate
\begin{equation}\label{Str-local}
\|e^{it\mathcal{L}_a}\psi(\sqrt{\mathcal{L}_a})f\|_{L_t^q(\mathbb{R},L_{x,y}^r(\mathbb{R}^{2+n}))}\leq C\|f\|_{L_{x,y}^2(\mathbb{R}^{2+n})},
\end{equation}
where $(q,r)$ satisfies the condition \eqref{admissible}. By virtue of the Littlewood-Paley theory, we can get from \eqref{Str-local} the following frequency-global Strichartz estimates 
\begin{equation}\label{Str-H}
\|e^{it\mathcal{L}_a}f\|_{L_t^q(\mathbb{R},L_{x,y}^r(\mathbb{R}^{2+n}))}\leq C\|f\|_{\dot{H}_a^s(\mathbb{R}^{2+n})},
\end{equation}
where $(q,r)$ and the Sobolev regularity index $s$ satisfy
\begin{equation*}
2\leq q\leq \infty, 2\leq r<\infty, \frac{2}{q}\leq (2+n)\left(\frac{1}{2}-\frac{1}{r}\right), \text{ and } s=(2+n)\left(\frac{1}{2}-\frac{1}{r}\right)-\frac{2}{q}.
\end{equation*}
However, the boundary case $r=\infty$ in \eqref{Str-H} requires separate consideration. More precisely, the boundary Strichartz estimate 
\begin{equation*}
\|e^{it\mathcal{L}_a}f\|_{L_t^q(\mathbb{R},L_{x,y}^\infty(\mathbb{R}^{2+n}))}\leq C\|f\|_{\dot{H}_a^{\frac{2+n}{2}-\frac{2}{q}}(\mathbb{R}^{2+n})},
\end{equation*}
cannot be directly obtained via Littlewood-Paley theory, since it breaks down in the $L^\infty$ framework. Following the argument in \cite{GLNY}, we can prove it in section \ref{boundary section} by combining interpolation method, $TT^*$-duality principle, and the decay estimates established in Theorem \ref{ResultTime}.

 Using the decay estimates in Theorem \ref{ResultTime},  
 we establish Strichartz estimates for the Schr\"{o}dinger equation (corresponds to $\phi(r)=r^2$), the wave equation (corresponds to $\phi(r)=r$), the fractional Schr\"{o}dinger equation (corresponds to $\phi(r)=r^\mu$), $0<\mu\neq 1$, the fourth-order Schr\"{o}dinger equation (corresponds to $\phi(r)=r^2+r^4$), the beam equation  (corresponds to $\phi(r)=\sqrt{1+r^4}$), and the Klein-Gordon equation (corresponds to $\phi(r)=\sqrt{1+r^2}$). 

Apart from the introduction, this paper is organized as follows: In Section \ref{sec2}, we review several analytical properties of the operator $\mathcal{L}_a$, including the distorted Fourier transform, the Schr\"odinger kernel and the associated spectral measure. Some auxiliary lemmas are also presented for subsequent proofs of decay and Strichartz estimates. In Section \ref{Proof of Main}, we establish the main result of this paper, namely Theorem \ref{ResultTime}. In section \ref{boundary section}, we prove the boundary Strichartz estimates (i.e., $L_t^q(\mathbb{R},L_{x,y}^\infty(\mathbb{R}^{2+n}))$-estimates)  for the fractional Schr\"{o}dinger equation associated with the operator $\mathcal{L}_a$. Finally, Section \ref{sec5} is dedicated to derive
Strichartz estimates for some concrete wave equations associated with the operator $\mathcal{L}_a$. \\\\

\noindent \textbf{Notations:}
\begin{itemize}
\item  $A\lesssim B$ means $A\leqslant C B$, and $A\sim B$ stands for $C_1B\leq A\leq C_2B$, where $C$, $C_1$, $C_2$ denote positive universal constants.
\item  For any $x, x'\in\mathbb{R}^2$, we express them in polar coordinates as $x=(r\cos \theta,r\sin\theta)$ and $x'=(r'\cos \theta',r'\sin\theta')$.
\item $J_\nu$ denotes the Bessel
function of order $\nu.$ 
\item $B^s_{p,r,a}$ denotes the inhomogeneous Besov space associated with the operator $\mathcal{L}_a$.
\item $\dot{B}^s_{p,r,a}$ denotes the homogeneous Besov space associated  with the operator $\mathcal{L}_a$.
\item $H^s_a$ denotes the inhomogeneous Sobolev space associated with the operator $\mathcal{L}_a$.
\item $\dot{H}^s_a$ denotes the homogeneous Sobolev space associated with the operator $\mathcal{L}_a$.
\end{itemize}

\section{Preliminaries}\label{sec2}
In this section, we recall some analytical features of the operator $\mathcal{L}_a$ and useful technical lemmas. We begin by introducing the concept of the distorted Fourier transform, based on the spectral decomposition of the operator $\mathcal{L}_a$. We refer the reader to \cite{WXZZ, Zhang-Zhang}. 
\subsection{Distorted Fourier Transform}
In this paper, we consider the self-adjoint Schrödinger operator
\begin{equation}\label{6.2}
\mathcal{L}_a=-\Delta_x - \Delta_y + \frac{a}{2} |x|^{-2}, \quad (x,y) \in \mathbb{R}^2 \times \mathbb{R}^n, \:a > 0,  
\end{equation}
where the partial Hardy potential $V(x, y) = \frac{a}{2} |x|^{-2}$ is singular and scaling critical. 

Notice that the potential $V(x, y) = \frac{a}{2} |x|^{-2}$ is radial only with respect to the first two variables $x = (x^1, x^2)$ and independent of the last variable $y$. Thus, the strategy is to utilize the Hankel transform in the variable $r=|x|$ and the classical Fourier transform in the variable $y$. In cylindrical coordinates,
\begin{equation*}
		x_1 = r \cos\theta, \quad x_2 = r \sin\theta, \quad y = y \in \mathbb{R}^n,
\end{equation*}
the operator $\mathcal{L}_a$ can be rewritten as
\begin{equation*}
\mathcal{L}_a=-\partial _{r}^{2}-\frac{1}{r}\partial _r+\frac{-\partial _{\theta}^{2}+\frac{a}{2}}{r^2}-\Delta_y,
\end{equation*}
We choose a complete orthonormal basis $\{\varphi_k\}_{k\in\mathbb{Z}}$ in  $L^2(\mathbb{S}^1)$, where 
\begin{equation*}
\varphi_k(\theta) = \frac{e^{ik\theta}}{\sqrt{2\pi}},
\end{equation*}
which satisfies
\begin{equation*}
\left(-\partial_\theta^2 + \frac{a}{2}\right)\varphi_k(\theta) = \left(\frac{a}{2} + k^2\right)\varphi_k(\theta)=\nu^2 \varphi_k(\theta),
\end{equation*}
with $\nu = \nu(k)= \sqrt{\frac{a}{2} + k^2}$ for any $k \in \mathbb{Z}$. Then for any $f\in L^2(\mathbb{R}^{2+n})$, we can expand $f$ into Fourier series of the form
\begin{equation*}
f(x,y) =f( r,\theta ,y)=\sum_{k\in\mathbb{Z}}a_k( r,y)\varphi_k(\theta) 
\end{equation*}
where 
\begin{equation*}
a_k(r,y)=\int_0^{2\pi}f( r,\theta ,y) \overline{\varphi _k(\theta) }d\theta.
\end{equation*}

For any $f\in L^2(\mathbb{R}^{2+n})$, the Hankel transform of order $\nu\geq0$ is defined by
\begin{equation}\label{6.5}
	(\mathcal{H}_\nu f)(\rho, \theta, y) = \int_0^\infty J_\nu(r\rho) f(r,\theta,y) r dr, 
\end{equation}
where $J_\nu$ is the Bessel function defined by
\begin{equation*}
	 J_\nu(r) = \frac{(r/2)^\nu}{\Gamma(\nu + 1/2)\Gamma(1/2)} \int_{-1}^1 e^{isr} (1 - s^2)^{\nu - 1/2} ds,
\end{equation*}
and the distorted Fourier transform associated with the operator $\mathcal{L}_a$ is defined by
\begin{equation}\label{Distorted}
	\mathcal{F}(f)(\rho, \theta, \eta) = \tilde{f}(\rho, \theta, \eta) = \sum_{k \in \mathbb{Z}} \left( \mathcal{H}_{\nu(k)} \hat{a}_k \right)(\rho, \eta) \varphi_k(\theta),
\end{equation}
where 
\begin{equation*}
\hat{a}_k(r,\eta) =\frac{1}{(2\pi)^{n/2}} \int_{\mathbb{R}^n} e^{-iy \cdot \eta} a_k(r,y) \, dy,
\end{equation*}
is the classical partial Fourier transform with respect to the $y$-variable. 

We need to list the properties of the Hankel transform (see \cite{burq2003}).
\begin{lemma}[Hankel transform]\label{L6.1}
 Let $\mathcal{H}_\nu$ be the Hankel transform in \eqref{6.5} and $A_\nu =-\partial_r^2 - \frac{1}{r}\partial_r + \frac{\nu^2}{r^2}$. Then
\begin{itemize}
\item[(1)] $\mathcal{H}_{\nu} = \mathcal{H}_{\nu}^{-1}$;
\item[(2)] $\mathcal{H}_{\nu}$ is self-adjoint, i.e.,  $\mathcal{H}_{\nu} = \mathcal{H}_{\nu}^{*}$;
\item[(3)] $\mathcal{H}_{\nu}$ is an $L^2$-isometry, i.e., $\| \mathcal{H}_{\nu} f \|_{L^2(\mathbb{R}^{2+n})} = \| f \|_{L^2(\mathbb{R}^{2+n})}$;
\item[(4)] $\mathcal{H}_{\nu}(A_{\nu}f)(\rho,\theta,y) = \rho^2(\mathcal{H}_{\nu}f)(\rho,\theta,y)$, for any $f \in L^2(\mathbb{R}^{2+n})$.
\end{itemize}
\end{lemma}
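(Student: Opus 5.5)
The plan is to prove the four properties by working one angular mode and one $y$-frequency at a time. For fixed $\theta$ and $y$, $\mathcal{H}_\nu$ acts only in the radial variable $r$ with measure $r\,dr$. Hence everything reduces to the classical one-dimensional Hankel transform on $L^2((0,\infty), r\,dr)$. The $\mathbb{R}^{2+n}$ statements then follow from Fubini in $(\theta,y)$, since $dx\,dy = r\,dr\,d\theta\,dy$.

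The key input for (1) is the Hankel inversion formula, equivalently the completeness and orthogonality relation
\begin{equation*}
\int_0^\infty J_\nu(r\rho)J_\nu(r\rho')\,r\,dr=\frac{\delta(\rho-\rho')}{\rho},\qquad \nu\ge 0,
\end{equation*}
understood distributionally. I would justify it either via the Weber--Sonine-type formula
\begin{equation*}
\int_0^\infty e^{-\epsilon r^2}J_\nu(r\rho)J_\nu(r\rho')\,r\,dr=\frac{1}{2\epsilon}e^{-(\rho^2+\rho'^2)/4\epsilon}I_\nu\!\left(\frac{\rho\rho'}{2\epsilon}\right)
\end{equation*}
followed by $\epsilon\to0$ using $I_\nu(z)\sim e^z/\sqrt{2\pi z}$, or simply by citing the standard theory in \cite{burq2003}. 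Applying this relation to Schwartz functions vanishing near $r=0$ gives $\mathcal{H}_\nu\mathcal{H}_\nu f=f$, which is (1).

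Property (2) is immediate from the symmetry of the kernel: $J_\nu(r\rho)$ is real and symmetric in $(r,\rho)$, so
\begin{equation*}
\langle \mathcal{H}_\nu f,g\rangle=\iint J_\nu(r\rho)f(r)\overline{g(\rho)}\,r\,dr\,\rho\,d\rho=\langle f,\mathcal{H}_\nu g\rangle.
\end{equation*}
Combining (1) and (2) gives $\|\mathcal{H}_\nu f\|^2=\langle \mathcal{H}_\nu^*\mathcal{H}_\nu f,f\rangle=\|f\|^2$ on a dense class. Extending by density yields (3), and then (1) and (2) on all of $L^2$.

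For (4), I use the Bessel equation: $g(r)=J_\nu(r\rho)$ satisfies $A_\nu g=\rho^2 g$. For $f$ smooth and compactly supported in $(0,\infty)$ in $r$, integrate by parts twice in
\begin{equation*}
\int_0^\infty J_\nu(r\rho)\Big(-f''-\tfrac1r f'+\tfrac{\nu^2}{r^2}f\Big)r\,dr .
\end{equation*}
This works because $-(rf')'=-rf''-f'$ makes the operator symmetric for $r\,dr$. The result moves $A_\nu$ onto $J_\nu(r\rho)$ with no boundary terms, giving $\rho^2\mathcal{H}_\nu f$. The identity then extends to the domain of $A_\nu$ (the Friedrichs extension) by closure, using (3). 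The assertion "for any $f\in L^2$" should be read in this domain or distributional sense.

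The main obstacle is the inversion formula in (1) together with the density argument near $r=0$. For small $\nu$, in particular $0<\nu<1$, which can occur here since $\nu=\sqrt{a/2}$ when $k=0$, one has to check that the core $C_c^\infty(0,\infty)$ produces the correct self-adjoint realization. In practice I would rely on the classical reference for this step.
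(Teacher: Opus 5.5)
Your proof is correct, but it does not follow the paper, which never proves this lemma: it lists (1)--(4) as classical facts with a pointer to \cite{burq2003}. You instead reduce, by Fubini in $(\theta,y)$, to the transform on $L^2((0,\infty),r\,dr)$, and derive everything from two ingredients: Bessel's equation for (4), and Weber's second exponential integral for (1). Weber's integral is the same identity the paper uses a few lines later for the heat kernel. So your inversion step amounts to showing that the heat kernel of $A_\nu$ is an approximate identity as $\epsilon\to0$; after that, (2) and (3) are formal. The citation buys brevity. Your route is self-contained and exposes the domain issue behind the phrase ``for any $f\in L^2$'' in (4), which, as you say, must be read on the operator domain.

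Two refinements would tighten the sketch.

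\emph{Order of proof.} Prove (4) on $C_c^\infty(0,\infty)$ before (1); it does not use (1). From $\rho^{2m}\mathcal{H}_\nu f=\mathcal{H}_\nu(A_\nu^m f)$ and $|J_\nu|\le 1$ you get $\mathcal{H}_\nu f\in L^1(\rho\,d\rho)$. This is what you need for dominated convergence in $\mathcal{H}_\nu(e^{-\epsilon r^2}\mathcal{H}_\nu f)\to\mathcal{H}_\nu\mathcal{H}_\nu f$. It is also what the Fubini step in (3) needs, where you apply (2) with $g=\mathcal{H}_\nu f$. In addition, $I_\nu(z)\sim e^z/\sqrt{2\pi z}$ holds only for large $z$. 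So treat the region $|\rho-\rho'|\ge\delta$ using $0\le I_\nu(z)\lesssim e^{z}$, which bounds the kernel by a constant times $\epsilon^{-1}e^{-(\rho-\rho')^2/4\epsilon}$.

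\emph{The case $0<\nu<1$.} The obstacle you flag can be settled without citation. Let $T=\mathcal{H}_\nu M_{\rho^2}\mathcal{H}_\nu$, with $M_{\rho^2}$ multiplication by $\rho^2$. Then (4) on $C_c^\infty(0,\infty)$ shows that $T$ is a self-adjoint extension of $A_\nu|_{C_c^\infty(0,\infty)}$. Now take $f=\mathcal{H}_\nu g$ with $(1+\rho^2)g\in L^2(\rho\,d\rho)$. Splitting the integral at $\rho=1/r$ and using $|J_\nu(s)|\lesssim\min(s^\nu,s^{-1/2})$ gives $|f(r)|\lesssim r^\nu$ as $r\to0$. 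Hence $D(T)$ excludes the $r^{-\nu}$ branch, and $T$ is the Friedrichs extension of $A_\nu$.
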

\noindent From Lemma \ref{L6.1}, for any $g(\rho, \theta, \eta) = \sum\limits_{k \in \mathbb{Z}} b_k(\rho, \eta) \varphi_k(\theta)$, we have the inverse distorted Fourier transform
\begin{equation}
	\mathcal{F}^{-1}(g)(r, \theta, y) = \sum_{k \in \mathbb{Z}} \left( \mathcal{H}_{\nu(k)} \check{b}_k \right)(r, y) \varphi_k(\theta),
\end{equation}
where 
\begin{equation*}
\check{b}_k(\rho, y) =\frac{1}{(2\pi)^{n/2}} \int_{\mathbb{R}^n} e^{iy \cdot \eta} b_k(\rho,\eta) \, d\eta,
\end{equation*}
is the classical partial inverse Fourier transform with respect to the $\eta$-variable. We also have the distorted Plancherel formula
\begin{equation}\label{Plancherel}
\|f\|_{L^2(\mathbb{R}^{2+n})}=\|\mathcal{F}(f)\|_{L^2(\mathbb{R}^{2+n})}. 
\end{equation}

From Lemma \ref{L6.1}, we have
\begin{equation*}
	\mathcal{F}(\mathcal{L}_a f)(\rho, \theta, \eta) = (\rho^2 + |\eta|^2)\mathcal{F}(f)(\rho, \theta, \eta).
\end{equation*}
Therefore, for any Borel measure function $F$, we obtain the functional calculus
\begin{align*}
	F(\mathcal{L} _a)f&=\mathcal{F} ^{-1}\bigl( F(\rho ^2+|\eta |^2)\mathcal{F} (f) \bigr)\\
	&=\int_0^{\infty}{\int_0^{2\pi}{\int_{\mathbb{R} ^n}{K(r,\theta ,y;r^{\prime},\theta ^{\prime},y^{\prime})f(r^{\prime},\theta ^{\prime},y^{\prime})r^{\prime}dr^{\prime}d\theta ^{\prime}dy^{\prime}}}}, 
\end{align*}
where the kernel
\begin{align*}
		K(x, y; x', y')&=K(r, \theta, y; r', \theta', y')\\
		&=\frac{1}{(2\pi)^n}\sum_{k \in \mathbb{Z}} \varphi_k(\theta)\overline{\varphi_k(\theta')} \int_0^\infty \int_{\mathbb{R}^n} e^{i(y - y')\cdot\eta} F(\rho^2 + |\eta|^2) J_{\nu(k)}(r\rho) J_{\nu(k)}(r'\rho) \rho d\rho d\eta,
\end{align*}
with $x=(r\cos \theta,r\sin\theta)$ and $x'=(r'\cos \theta',r'\sin\theta')$.

In particular, take \( F(\tau) = e^{-z\tau} \) with \( \text{Re}(z) > 0 \). Using 
\begin{equation*}
\int_{\mathbb{R} ^n}{e^{-ix\cdot y}e^{-a\left| x \right|^2}dx=\left( \frac{\pi}{a} \right) ^{\frac{n}{2}}e^{-\frac{\left| y \right|^2}{4a}}},\;a>0,
\end{equation*}
and the Weber identity (see \cite{taylor1996}): for $r,r'>0$ and $\nu\geq0$,
\begin{equation*}
\int_0^{\infty}e^{-z\rho ^2}J_\nu(r\rho )J_\nu(r^{\prime} \rho )\rho d\rho =\frac{1}{2z}e^{-\frac{r^2+{r^{\prime}}^2}{4z}}I_\nu\left( \frac{rr^{\prime}}{2z} \right), 
\end{equation*}
where the modified Bessel function $I_\nu(z)$ is given by
\begin{equation*}
	I_\nu(z) = \frac{1}{\pi} \int_0^\pi e^{z\cos s} \cos(\nu s) ds - \frac{\sin \nu \pi}{\pi} \int_0^\infty e^{-z\cosh s} e^{-sv} ds,
\end{equation*}
the corresponding kernel is
\begin{equation*}
K(z; x,y;x',y')=K(z; r, \theta, y; r^{\prime}, \theta^{\prime}, y^{\prime}) 
=\frac{1}{(4\pi z)^\frac{n+2}{2}}e^{-\frac{|y-y^{\prime}|^2}{4z}}e^{-\frac{r^2+{r^{\prime}}^2}{4z}}\sum_{k\in \mathbb{Z}}e^{ik(\theta -\theta ^{\prime})}I_{\nu (k)}\left( \frac{rr^{\prime}}{2z} \right).
\end{equation*}
Letting $z=t>0$, suppose that $e^{-t\mathcal{L}_{a}}(x,y;x',y')=e^{-t\mathcal{L}_{a}}(r,\theta,y;r',\theta',y')$ is the heat kernel of $e^{-t\mathcal{L}}$, then we have the Gaussian bounds (see [Proposition 3.1, \citenum{Zhang-Zhang}]) 
\begin{equation*}
|e^{-t\mathcal{L}_a}(x,y;x',y')|\leq C|t|^{-\frac{n+2}{2}}e^{-\frac{|(x,y)-(x',y')|^2}{8t}}.
\end{equation*}

\subsection{The Schr\"odinger kernel and spectral measure associated with the operator $\mathcal{L}_a$}
Zhang-Zhang [Proposition 3.5, \citenum{Zhang-Zhang}] constructed the representation of the Schrödinger kernel and derived the dispersive estimates for the Schr\"odinger propagator $e^{-it\mathcal{L}_a}$.
\begin{proposition}\label{Prop 6.2}
Let \( \mathcal{L}_a \) be the operator given in (\ref{6.2}) and let \( x, x^{\prime} \in \mathbb{R}^2 \) and \( y, y' \in \mathbb{R}^n \). Suppose \( K_a(t; x, y, x', y') \) is the kernel of \( e^{-it\mathcal{L}_a} \). Define
\begin{equation}\label{A-B}
\begin{aligned}
A_a(s; \theta, \theta') &=\sum_{k\in \mathbb{Z}}e^{ik(\theta -\theta ')}\left( \cos(\sqrt{k^2+c}\,s)-\cos(|k|s) \right)\\
                        &= \left( \cos(\sqrt{c} s) - 1 \right) + sG_1(s; \theta, \theta') + s^2G_2(s; \theta, \theta'), \\
B_a(s; \theta, \theta') &=\sum_{k \in \mathbb{Z}} e^{ik(\theta - \theta')} \sin(\sqrt{k^2 + c}\pi) e^{-s\sqrt{k^2 + c}}\\
                        &= \sin(\sqrt{c} \pi) \, e^{-s\sqrt{c}} + D_1(s; \theta, \theta') + D_2(s; \theta, \theta'),
\end{aligned}
\end{equation}
where $c=\frac{a}{2}$, $G_1,G_2$ satisfy
\begin{equation}\label{G1-G2}
| G_1(s; \theta, \theta') | + | G_2(s; \theta, \theta') | \leq C, \quad 0 \leq s \leq \pi, 
\end{equation}
and $D_1, D_2$ satisfy
\begin{equation}\label{D1-D2}
	|D_1(s; \theta, \theta')| + |D_2(s; \theta, \theta')| \leq C\Bigl( e^{-s/2} + \Bigl| \sum_{\substack{k \in \mathbb{Z} \\ k \neq 0}} \frac{e^{ik(\theta - \theta')}(-1)^{|k|}}{|k|} e^{-s|k|} \Bigr| \Bigr).
\end{equation}
Then we have
\begin{equation*}
K_a(t; x, y; x', y') = K_0(t; x, y; x', y') + E_a(t; x, y; x', y'),
\end{equation*}
where 
\begin{equation*}
K_0(t; x, y; x', y') = C_n|t|^{-\frac{2+n}{2}}e^{-\frac{|(x,y)-(x',y')|^2}{4it}},
\end{equation*}
with $C_n$ being a constant only depending on $n$ and 
\begin{equation*}
\begin{aligned}
E_a(t;x,y;x',y') &= \frac{C_n}{|t|^{\frac{2+n}{2}}} e^{-\frac{|y-y'|^2}{4it}} 
\Bigg( \int_{|\theta-\theta'|}^{\pi} e^{-\frac{r^2 + r'^2 - 2rr'\cos s}{4it}} A_a(s;\theta,\theta') \, ds \\
&\quad - \int_0^\infty e^{-\frac{r^2 + r'^2 - 2rr'\cosh s}{4it}} B_a(s;\theta,\theta') \, ds \Bigg).
\end{aligned}
\end{equation*}
Furthermore, there exists a positive constant $C$ such that
\begin{equation*}
|E_a(t; x, y; x', y')| \leq C |t|^{-\frac{2+n}{2}}.
\end{equation*}
\end{proposition}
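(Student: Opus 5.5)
The plan is to obtain the Schr\"odinger kernel by analytic continuation of the heat kernel computed above. I would take $z=\epsilon+it$ with $\epsilon>0$ and let $\epsilon\to0^+$. Start from
\[
K(z;r,\theta,y;r',\theta',y')=\frac{1}{(4\pi z)^{\frac{n+2}{2}}}e^{-\frac{|y-y'|^2}{4z}}e^{-\frac{r^2+r'^2}{4z}}\sum_{k\in\mathbb{Z}}e^{ik(\theta-\theta')}I_{\nu(k)}\Bigl(\frac{rr'}{2z}\Bigr),
\]
and insert the integral representation of $I_\nu$ recalled earlier. This splits the kernel into two pieces:
\begin{itemize}
\item a \emph{compact} piece, $\frac1\pi\int_0^\pi e^{w\cos s}\cos(\nu s)\,ds$;
\item a \emph{tail} piece, $-\frac{\sin\nu\pi}{\pi}\int_0^\infty e^{-w\cosh s-\nu s}\,ds$,
\end{itemize}
with $w=\frac{rr'}{2z}$ and $\nu=\nu(k)=\sqrt{k^2+c}$.

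For the compact piece I would compare with the free case $a=0$, where $\nu=|k|$ and $\sin(|k|\pi)=0$. There the tail piece vanishes and, by Poisson summation,
\[
\sum_k e^{ik\omega}\cos(|k|s)=\pi\bigl(\delta(s-\omega)+\delta(s+\omega)\bigr)
\]
as a distribution on $[0,\pi]$ (modulo $2\pi$). The compact integral then collapses to $e^{w\cos(\theta-\theta')}$, and with the prefactors this reproduces the Euclidean heat kernel, hence $K_0$ after continuation. Writing
\[
\cos(\nu s)=\cos(|k|s)+\bigl(\cos(\nu s)-\cos(|k|s)\bigr)
\]
isolates the perturbation $A_a$. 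The tail piece summed over $k$ gives exactly $B_a$. This yields the formula for $E_a$, except that the $s$-integral for $A_a$ should run over $[|\theta-\theta'|,\pi]$ rather than $[0,\pi]$.

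To justify that lower limit, I would view $\sum_k e^{ik\omega}\cos(\nu(k)s)$ as the kernel of $\cos\bigl(s\sqrt{-\partial_\theta^2+c}\bigr)$ on $\mathbb{S}^1$. By finite propagation speed it is supported in $|\omega|\le s$. The same holds for the free kernel, and the two singular parts coincide, so their difference vanishes for $s<|\omega|$.

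Next come the expansions \eqref{A-B}. For $A_a$, I would separate the $k=0$ term, which gives $\cos(\sqrt c s)-1$. For $k\neq0$, expand
\[
\sqrt{k^2+c}=|k|+\frac{c}{2|k|}+O(|k|^{-3})
\]
and Taylor expand $\cos(\nu s)-\cos(|k|s)$. The term linear in $\frac{c\,s}{2|k|}$ produces $s\,G_1$ with $G_1$ a multiple of $\sum_{k\ne0}\frac{\sin(|k|s)e^{ik\omega}}{|k|}$, which is bounded (a conjugate-Fourier/sawtooth series). The remainder is $O(s^2k^{-2})$ plus $O(sk^{-3})$, giving $s^2G_2$ with an absolutely convergent, hence bounded, sum. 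This gives \eqref{G1-G2}.

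For $B_a$, the $k=0$ term is $\sin(\sqrt c\pi)e^{-s\sqrt c}$. For $k\neq0$, expand
\[
\sin(\nu\pi)=(-1)^{|k|}\sin\Bigl(\frac{c\pi}{2|k|}+O(|k|^{-3})\Bigr),\qquad e^{-s\nu}=e^{-s|k|}\bigl(1+O(s/|k|)\bigr).
\]
The leading part gives the series in \eqref{D1-D2}. The remainders are absolutely summable and controlled by $e^{-s/2}$ for $s\ge1$, and by a bounded quantity for small $s$.

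For the pointwise bound, at $z=it$ all the Gaussian-type exponentials are unimodular. Hence
\[
|E_a|\lesssim|t|^{-\frac{n+2}{2}}\Bigl(\int_0^\pi|A_a|\,ds+\int_0^\infty|B_a|\,ds\Bigr).
\]
The first integral is finite by \eqref{G1-G2}. For the second, $e^{-s\sqrt c}$ and $e^{-s/2}$ are integrable, and the remaining series sums in closed form:
\[
\sum_{k\neq0}\frac{(-1)^{|k|}e^{ik\omega}e^{-s|k|}}{|k|}=-2\,\mathrm{Re}\log\bigl(1+e^{-s+i\omega}\bigr).
\]
This is bounded by $C e^{-s}$ for $s\ge1$ and has at most a logarithmic singularity $|\log(s+|\omega-\pi|)|$ as $s\to0$, which is integrable in $s$ uniformly in $\omega$.

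I expect the main obstacle to be rigorously justifying the limit $\epsilon\to0$. This means interchanging the $k$-sum with the $s$-integrals, since the sums defining $A_a$ converge only distributionally, and verifying the support property that produces the lower limit $|\theta-\theta'|$. I would handle this by testing against smooth functions of $\theta$ and using the decay $\nu(k)-|k|=O(1/|k|)$ to obtain dominated convergence.
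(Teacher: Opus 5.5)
Your proposal is correct. The paper does not prove this proposition itself. It imports it from Zhang--Zhang (their Proposition 3.5), right after recalling exactly the heat-kernel formula and the integral representation of $I_\nu$ that you start from. Your route is the standard way this representation is obtained:
\begin{itemize}
\item continue from $z=\epsilon+it$ to $z=it$;
\item subtract the case $a=0$ via $\sum_k e^{ik\omega}I_{|k|}(w)=e^{w\cos\omega}$;
\item use finite propagation speed of $\cos(s\sqrt{-\partial_\theta^2+c})$ on $\mathbb{S}^1$ to get the lower limit $|\theta-\theta'|$;
\item expand $\sqrt{k^2+c}$ to obtain \eqref{G1-G2}, \eqref{D1-D2} and hence \eqref{6.20}.
\end{itemize}

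First remark: in the $B_a$-integral your computation gives $e^{-\frac{r^2+r'^2}{4z}}e^{-\frac{rr'}{2z}\cosh s}=e^{-\frac{r^2+r'^2+2rr'\cosh s}{4z}}$. This is not the $-2rr'\cosh s$ printed in $E_a$. Your sign is the right one: it matches $|\vec{n}_s^2|^2=r^2+r'^2+2rr'\cosh s+|y-y'|^2$ in Proposition~\ref{Prop 6.3}. The printed sign is a misprint, and the modulus bound is unaffected.

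Second remark: because the angular operator has constant coefficients, you can replace the finite-speed argument and the $G_1,G_2$ bookkeeping by an explicit formula. The sum $\frac{1}{2\pi}\sum_k e^{ik\omega}\cos(\nu(k)s)$ is the $2\pi$-periodization of the one-dimensional Klein--Gordon kernel of $\cos(s\sqrt{-\partial^2+c})$. Subtracting the $c=0$ delta terms gives, for $0\le s\le\pi$ and $\omega\in(-\pi,\pi]$ representing $\theta-\theta'$,
\[
A_a(s;\theta,\theta')=-\pi\sqrt{c}\,s\,\frac{J_1\bigl(\sqrt{c}\sqrt{s^2-\omega^2}\bigr)}{\sqrt{s^2-\omega^2}}\,\mathbf{1}_{\{|\omega|<s\}}.
\]
From this, $A_a=0$ for $s<|\omega|$ and $|A_a|\le\frac{\pi c}{2}s$ follow in one step, and \eqref{A-B}--\eqref{G1-G2} follow trivially. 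Your expansion-based argument reaches the same conclusions, just less directly.
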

\begin{remark}
It can be derived from \eqref{A-B}, \eqref{G1-G2} and \eqref{D1-D2} that
\begin{equation}\label{6.20}
\int_{0}^{\pi} |A_a(s; \theta, \theta')| \, ds + \int_{0}^{\infty} |B_a(\theta, \theta', s)| \, ds \leq C,
\end{equation}
\end{remark}
Since the half-wave kernel can not be written in an explicit form, establishing Strichartz estimates for the wave equation turns out to be much more complicated. To resolve this obstacle, Zhang-Zhang \cite{Zhang-Zhang} constructed an explicit representation for the spectral measure associated with the operator $\mathcal{L}_a$.
\begin{proposition}\label{Prop 6.3}
Let $\mathcal{L}_a$ be the operator given in \eqref{6.2} and let $x,x'\in \mathbb{R}^2$ and $y, y' \in \mathbb{R}^n$. Let $dE_{\sqrt{\mathcal{L}_a}}(\lambda; x, y; x', y')$ be the kernel of the spectral measure, then
\begin{equation}\label{spectral}
\begin{aligned}
dE_{\sqrt{\mathcal{L}_a}}(\lambda; x, y; x', y') &= \frac{\lambda^{n+1}}{\pi^{n+1}} \sum_{\pm} a_{\pm}(\lambda|(x, y) - (x', y')|) e^{\pm i\lambda|(x, y) - (x', y')|}  \\
 &+ \frac{\lambda^{n+1}}{\pi^{n+1}} \int_{|\theta - \theta'|}^{\pi} \sum_{\pm} a_{\pm}(\lambda|\vec{n}_s^1|) e^{\pm i\lambda|\vec{n}_s^1|} \times A_a(s; \theta, \theta') ds  \\
&- \frac{\lambda^{n+1}}{\pi^{n+1}} \int_0^\infty \sum_{\pm} a_{\pm}(\lambda|\vec{n}_s^2|) e^{\pm i\lambda|\vec{n}_s^2|} \times B_a(s, \theta, \theta') ds, 
\end{aligned}
\end{equation}
where $a_{\pm}(r)$ satisfies
\begin{equation*}
|\partial_\lambda^k a_\pm(r)| \leq C_k (1 + r)^{-\frac{n+1}{2}-k}, \;\forall k\in\mathbb{N},
\end{equation*}
and $A_a(s, \theta, \theta'), B_a(s, \theta, \theta')$ are as \eqref{A-B},
and
\begin{align*}
\vec{n}_s^1 &= (r - r', \sqrt{2rr'(1-\cos s)}, y - y'), \\
\vec{n}_s^2 &= (r + r', \sqrt{2rr'(\cosh s - 1)}, y - y').
\end{align*}
\end{proposition}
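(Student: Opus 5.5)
The approach is to read the spectral measure off the Schr\"odinger kernel of Proposition \ref{Prop 6.2}, using the fact that every term there is a free Schr\"odinger kernel in dimension $2+n$ evaluated at a modified distance. The starting point is the identity
\begin{equation*}
dE_{\sqrt{\mathcal{L}_a}}(\lambda)=\frac{\lambda}{\pi}\int_{\mathbb{R}}e^{it\lambda^2}e^{-it\mathcal{L}_a}\,dt ,\qquad \lambda>0,
\end{equation*}
understood in the sense of distributions in $\lambda$. It follows from the functional calculus (the distorted Fourier transform \eqref{Distorted} and Lemma \ref{L6.1}): for $F\in C_c^\infty(0,\infty)$, write $F(\mathcal{L}_a)=\int \widehat F(t)e^{-it\mathcal{L}_a}dt$ and change variables $\tau=\lambda^2$.

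First I would insert the decomposition $K_a=K_0+E_a$ of Proposition \ref{Prop 6.2} and observe the elementary identities
\begin{equation*}
r^2+r'^2-2rr'\cos s+|y-y'|^2=|\vec n^1_s|^2,\qquad r^2+r'^2-2rr'\cosh s+|y-y'|^2=|\vec n^2_s|^2-4rr'(\cosh s-1)+\cdots
\end{equation*}
The second must be handled with care. Since $(r+r')^2+2rr'(\cosh s-1)=r^2+r'^2+2rr'\cosh s$, the $B_a$ term should be matched with the phase $-(r^2+r'^2+2rr'\cosh s)/4it$. The sign in the displayed formula for $E_a$ comes from the analytic continuation $s\mapsto \pi+is$ of the cosine term, and I would check it against Zhang-Zhang's derivation. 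With this matching, each integrand is $K_0$ evaluated at $|\vec n^j_s|$ in place of $|(x,y)-(x',y')|$, so that
\begin{equation*}
e^{-it\mathcal{L}_a}(x,y;x',y')=K_0^{\mathbb{R}^{2+n}}\bigl(t;|(x,y)-(x',y')|\bigr)+\int_{|\theta-\theta'|}^{\pi}A_a\,K_0^{\mathbb{R}^{2+n}}\bigl(t;|\vec n^1_s|\bigr)ds-\int_0^\infty B_a\,K_0^{\mathbb{R}^{2+n}}\bigl(t;|\vec n^2_s|\bigr)ds .
\end{equation*}

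Second, I would apply the $t$-Fourier transform above to each term and interchange the $t$- and $s$-integrals. For the free kernel, $\frac{\lambda}{\pi}\int e^{it\lambda^2}K_0^{\mathbb{R}^{2+n}}(t;\rho)\,dt$ is the free spectral measure of $\sqrt{-\Delta}$ on $\mathbb{R}^{2+n}$, namely $c\,\lambda^{n+1}(\lambda\rho)^{-n/2}J_{n/2}(\lambda\rho)$. The standard asymptotics of Bessel functions give
\begin{equation*}
(\lambda\rho)^{-n/2}J_{n/2}(\lambda\rho)=\sum_\pm a_\pm(\lambda\rho)e^{\pm i\lambda\rho},\qquad |\partial^k a_\pm(r)|\le C_k(1+r)^{-\frac{n+1}{2}-k}.
\end{equation*}
This uses the smooth bounded behaviour near $0$, where the splitting is made with a cutoff, and the symbol expansion at infinity. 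Substituting $\rho=|(x,y)-(x',y')|$, $|\vec n^1_s|$ and $|\vec n^2_s|$ yields exactly \eqref{spectral}.

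The main obstacle is justifying the interchange of the $t$- and $s$-integrations, since the $t$-integral is only oscillatory and not absolutely convergent. I would first regularize, replacing $e^{-it\mathcal{L}_a}$ by $e^{-(\varepsilon+it)\mathcal{L}_a}$ using the complex-time heat kernel $K(z;\cdot)$, or pair with a test function $F(\lambda)$. After this step all integrals converge absolutely: the $s$-integrals are controlled by \eqref{6.20}, namely $\int_0^\pi|A_a|\,ds+\int_0^\infty|B_a|\,ds\le C$, and the $a_\pm$ are bounded. Fubini then applies, and I would let $\varepsilon\to0$ by dominated convergence, using the uniform bound $|a_\pm|\le C$ together with \eqref{6.20}.
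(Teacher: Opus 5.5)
Your argument is correct and follows the standard derivation of this formula in Zhang--Zhang; the paper itself only quotes the result. That derivation writes $dE_{\sqrt{\mathcal{L}_a}}(\lambda)=\frac{\lambda}{\pi}\int_{\mathbb{R}}e^{it\lambda^2}e^{-it\mathcal{L}_a}\,dt$, treats every piece of the Schr\"odinger kernel as the free $(2+n)$-dimensional kernel evaluated at $|(x,y)-(x',y')|$, $|\vec{n}_s^1|$ or $|\vec{n}_s^2|$, and reads off the free spectral measure $c\,\lambda^{n+1}\sum_{\pm}a_{\pm}(\lambda\rho)e^{\pm i\lambda\rho}$.

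Your sign correction is right, though for a simpler reason than analytic continuation. The $e^{-w\cosh s}$ term in the integral representation of $I_\nu$ produces the phase $r^2+r'^2+2rr'\cosh s=|\vec{n}_s^2|^2-|y-y'|^2$, so the minus sign in the displayed $E_a$ is a misprint.

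Of your two regularizations, only the complex-time one makes the $t$-integral absolutely convergent; pairing with a test function does not, because the kernel has size $|t|^{-\frac{n+2}{2}}$ near $t=0$. That regularization needs the kernel decomposition at $z=\varepsilon+it$, which follows from $K(z;\cdot)$ by the same computation. It then gives the identity exactly, with a common factor $e^{-\varepsilon\lambda^2}$ on both sides, so you do not need the limit $\varepsilon\to0$.
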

\begin{remark}
The functions $a_\pm(r)$ satisfy
\begin{equation*}
\int_{\mathbb{S}^{n+1}} e^{-ix \cdot \omega} \, d\sigma(\omega) = \sum_{\pm} a_{\pm}(|x|) e^{\pm i|x|}.
\end{equation*}
By the vanishing property of $a_\pm(r)$ at the origin and infinity, we have
\begin{equation}\label{6.19}
|\partial_\lambda^k(a_\pm(\lambda r))| \leq C_k\min\{\lambda^{-k} (1 + \lambda r)^{-\frac{n+1}{2}}, r^k\}, \;\forall \lambda>0 \text{ and } k\in\mathbb{N}.
\end{equation}
\end{remark}

\subsection{Technical Lemmas} 
From the spectral measure associated with the operator $\mathcal{L}_a$ and Young's inequality, the $L^1(\mathbb{R}^{2+n})\to L^\infty(\mathbb{R}^{2+n})$ decay estimates in Theorem \ref{ResultTime} are reduced to a list of oscillatory integrals. In order to estimate the oscillatory integrals, we introduce the stationary phase lemma.
\begin{lemma}[van der Corput lemma \cite{S1993}]\label{phase} Let $g\in C^\infty([a,b])$ be real-valued such that $|g''(x)|\geq \delta $ for any $x\in[a,b]$ with $\delta >0$. Then for any function $\psi \in C^\infty([a,b])$, there exists a constant $C$ (does not depend on $\delta, a, b, g$ or $\psi$)  such that
	\begin{equation*}
	\left|\int_a^b e^{{ ig(x)}}\psi(x)\,dx\right|\leq C{ \delta ^{-1/2}}\left(\|\psi\|_\infty+\|\psi'\|_1\right).
	\end{equation*}
 \end{lemma}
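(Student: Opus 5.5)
The statement immediately above is the van der Corput lemma, which the paper quotes from \cite{S1993}. I would prove it in two steps: first the case $\psi\equiv 1$, then the general amplitude by integration by parts. The reduction to $\psi\equiv1$ is routine. The real content is the second-derivative bound for the pure oscillatory integral, whose proof rests on a first-derivative estimate with monotone $g'$.

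\textbf{First-derivative estimate.} Suppose $g'$ is monotone and $|g'|\ge\lambda>0$ on an interval $[c,d]$. Write
\[
e^{ig}=\frac{1}{ig'}\frac{d}{dx}e^{ig}
\]
and integrate by parts. The boundary terms are bounded by $2/\lambda$. The remaining term is
\[
\int_c^d e^{ig}\,\frac{d}{dx}\Big(\frac{1}{g'}\Big)\,dx .
\]
Since $1/g'$ is monotone, $\frac{d}{dx}(1/g')$ has one sign, so this term is at most
\[
\Big|\frac{1}{g'(d)}-\frac{1}{g'(c)}\Big|\le \frac{1}{\lambda}.
\]
Altogether $\left|\int_c^d e^{ig}\,dx\right|\le 3/\lambda$.

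\textbf{The case $\psi\equiv1$ under $|g''|\ge\delta$.} Because $g''$ is continuous and never zero on $[a,b]$, it has constant sign, say $g''\ge\delta$, so $g'$ is increasing. Hence there is at most one point $x_0\in[a,b]$ at which $|g'|$ is minimal, and $|g'(x)|\ge\delta|x-x_0|$ for all $x\in[a,b]$. Fix $\epsilon>0$ and split $[a,b]$ into three parts:
\begin{itemize}
\item on $[x_0-\epsilon,x_0+\epsilon]\cap[a,b]$, bound the integral trivially by $2\epsilon$;
\item on each of the at most two remaining subintervals, $g'$ is monotone with $|g'|\ge\delta\epsilon$, so the first-derivative estimate gives at most $3/(\delta\epsilon)$ each.
\end{itemize}
This yields
\[
\left|\int_a^b e^{ig}\,dx\right|\le 2\epsilon+\frac{6}{\delta\epsilon}.
\]
Choosing $\epsilon=\delta^{-1/2}$ gives the bound $C\delta^{-1/2}$ with an absolute constant.

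\textbf{General amplitude $\psi$.} Set
\[
F(x)=\int_a^x e^{ig(s)}\,ds .
\]
Each $F(x)$ is an integral over a subinterval $[a,x]$, where the hypothesis $|g''|\ge\delta$ still holds, so the previous step gives $|F(x)|\le C\delta^{-1/2}$ uniformly in $x$. Integrating by parts,
\[
\int_a^b e^{ig}\psi\,dx=F(b)\psi(b)-\int_a^b F(x)\psi'(x)\,dx .
\]
The absolute value of the right-hand side is at most
\[
C\delta^{-1/2}\big(\|\psi\|_\infty+\|\psi'\|_1\big),
\]
which is the claimed inequality.

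The only delicate point is the first-derivative estimate. Its total-variation argument needs $g'$ monotone on each piece, and this is exactly what the constant sign of $g''$ supplies. The constant produced this way is independent of $a$, $b$, $g$, $\psi$ and $\delta$, as the statement requires.
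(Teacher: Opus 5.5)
Your proof is correct and is essentially the standard argument from Stein's book, which is what the paper relies on: the paper gives no proof of its own and simply cites \cite{S1993}. The argument runs through the same steps: the $3/\lambda$ bound for monotone $g'$, the split around the minimum point of $|g'|$ with $\epsilon=\delta^{-1/2}$ (yielding the absolute constant $8$), and integration by parts against $F(x)=\int_a^x e^{ig(s)}\,ds$ to handle the amplitude $\psi$.
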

We also exploit the following estimates, which can be easily proved by comparing the sums with the corresponding integrals.
\begin{lemma}\label{Sum}
Fix $\beta>0$. There exists $C_\beta>0$ such that for any $A>0$, we have
\begin{align*}
\sum_{j\in\mathbb{Z}, 2^j\leq A}2^{j\beta}&\leq C_\beta A^\beta,\\
\sum_{j\in\mathbb{Z}, 2^j>A}2^{-j\beta}&\leq C_\beta A^{-\beta}.
\end{align*}
\end{lemma}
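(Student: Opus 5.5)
The plan is to compare each sum with a geometric series whose largest term is controlled by $A$. This gives the explicit constant $C_\beta=(1-2^{-\beta})^{-1}$, which depends only on $\beta$ and not on $A$.

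For the first inequality, fix $A>0$ and let $j_0=\lfloor \log_2 A\rfloor\in\mathbb{Z}$. Then $\{j\in\mathbb{Z}: 2^j\leq A\}=\{j\leq j_0\}$ and $2^{j_0}\leq A$. Substituting $j=j_0-l$ with $l\geq 0$, the sum becomes
\begin{equation*}
\sum_{j\leq j_0}2^{j\beta}=2^{j_0\beta}\sum_{l=0}^{\infty}2^{-l\beta}=\frac{2^{j_0\beta}}{1-2^{-\beta}}\leq \frac{A^\beta}{1-2^{-\beta}}.
\end{equation*}
The series converges because $\beta>0$. The last step uses $2^{j_0}\leq A$ and the fact that $t\mapsto t^\beta$ is increasing.

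For the second inequality, let $j_1$ be the least integer with $2^{j_1}>A$; it exists since $2^j\to 0$ as $j\to-\infty$ and $2^j\to\infty$ as $j\to\infty$. Then $\{j\in\mathbb{Z}: 2^j>A\}=\{j\geq j_1\}$. Substituting $j=j_1+l$ with $l\geq 0$, the sum becomes
\begin{equation*}
\sum_{j\geq j_1}2^{-j\beta}=2^{-j_1\beta}\sum_{l=0}^\infty 2^{-l\beta}=\frac{2^{-j_1\beta}}{1-2^{-\beta}}\leq\frac{A^{-\beta}}{1-2^{-\beta}}.
\end{equation*}
The last step uses $2^{j_1}>A$, which gives $2^{-j_1\beta}<A^{-\beta}$.

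There is no genuine obstacle. The only point needing care is that the index sets are exactly the half-lines $\{j\leq j_0\}$ and $\{j\geq j_1\}$, and this holds because $j\mapsto 2^j$ is strictly increasing on $\mathbb{Z}$.
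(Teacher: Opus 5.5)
Your proof is correct. Because $j\mapsto 2^j$ is strictly increasing, the index sets are exactly $\{j\le j_0\}$ and $\{j\ge j_0+1\}$, and summing the geometric series gives both bounds with the explicit constant $C_\beta=(1-2^{-\beta})^{-1}$, uniform in $A$.

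The paper gives no detailed proof. It only remarks that the estimates follow by comparing the sums with the corresponding integrals, for instance via $2^{j\beta}=\frac{\beta}{2^\beta-1}\int_{2^j}^{2^{j+1}}s^{\beta-1}\,ds$ summed over $j\le j_0$. Your exact geometric-series computation is a more direct route. The dyadic integral comparison reproduces exactly your constant, so your version loses nothing and simply makes the constant explicit.
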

Finally, we apply the following duality arguments (see \cite{GV}) to prove the Strichartz estimates for some concrete equations related to the operator $\mathcal{L}_a$.
\begin{lemma}\label{equ} Let $H$ be a Hilbert space, $X$ be a Banach space, $X^*$ be the dual of $X$, and $D$ be a dense vector space contained in $X$. Let $A\in \mathcal{L}_a(D,H)$ and   $A^*\in \mathcal{L}_a(H,D_a^*)$ be its adjoint operator, defined by
	\begin{equation*}
	\langle A^*v,f \rangle_D=\langle v,Af \rangle_H, \quad \forall f\in D, \quad \forall v \in H,
	\end{equation*}
where $\mathcal{L}_a(Y,Z)$ is the space of linear maps from a vector space $Y$ to a vector space $Z$, $D^*_a$ is the algebraic dual of $D$, $\langle \varphi,f\rangle_D$ is the pairing between $D^*_a$ and $D$, and $\langle\cdot,\cdot\rangle_H$ is the scalar product in $H$. Then the following three conditions are equivalent:
\begin{enumerate}
    \item There exists  $0\leqslant a < \infty$ such that for all $f \in D$,
	\begin{equation*}
	\|Af\| \leqslant a\|f\|_X.
	\end{equation*}
 \item  $\mathfrak{R}(A^*)\subset X^*$, and there exists $0\leqslant a <\infty$, such that for all $v \in H$,
	\begin{equation*}
	\|A^*v\|_{X^*} \leqslant a\|v\|.
	\end{equation*}
 \item $\mathfrak{R}(A^*A)\subset X^*$, and there exists $a$, $0\leqslant a < \infty$, such that for all $f \in D$,
	\begin{equation*}
	\|A^*Af\|_{X^*} \leqslant a^2\|f\|_X,
	\end{equation*}
\end{enumerate}
where $\|\cdot\|$ denote the norm in $H$ and the constant $a$ is the same in all three parts. If one of those conditions is satisfied, the operators $A$ and $A^*A$ extend by continuity to bounded operators from $X$ to $H$ and from $X$ to $X^*$, respectively.
\end{lemma}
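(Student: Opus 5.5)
The statement is the abstract duality principle of Ginibre--Velo, and I would prove it by the cycle (1)$\Rightarrow$(2)$\Rightarrow$(1), then (1)$\wedge$(2)$\Rightarrow$(3), then (3)$\Rightarrow$(1). The only tools are the definition of $A^*$ through the pairing $\langle A^*v,f\rangle_D=\langle v,Af\rangle_H$, the density of $D$ in $X$, and Cauchy--Schwarz in $H$. Throughout, the inclusion ``$\mathfrak{R}(A^*)\subset X^*$'' is understood to mean the following: for each $v$, the algebraic functional $A^*v$ on $D$ is continuous for the $X$-norm, and is therefore identified with its unique continuous extension to $X$.

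For (1)$\Rightarrow$(2), fix $v\in H$. For every $f\in D$ we have
\[
|\langle A^*v,f\rangle_D|=|\langle v,Af\rangle_H|\le \|v\|\,\|Af\|\le a\|v\|\,\|f\|_X .
\]
Thus $A^*v$ is a linear functional on the dense subspace $D$ that is bounded in the $X$-norm. It therefore extends uniquely to an element of $X^*$ with $\|A^*v\|_{X^*}\le a\|v\|$.

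For (2)$\Rightarrow$(1), take $f\in D$ and use the fact that the norm in a Hilbert space is attained by testing against its unit ball:
\[
\|Af\|=\sup_{\|v\|\le1}|\langle v,Af\rangle_H|=\sup_{\|v\|\le1}|\langle A^*v,f\rangle_D|\le \sup_{\|v\|\le1}\|A^*v\|_{X^*}\|f\|_X\le a\|f\|_X .
\]
Given (1) and (2) together, composing the two bounds yields $A^*Af\in X^*$ and $\|A^*Af\|_{X^*}\le a\|Af\|\le a^2\|f\|_X$, which is (3).

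The step that carries the content is (3)$\Rightarrow$(1), and it is the $TT^*$ trick. For $f\in D$ one writes
\[
\|Af\|^2=\langle Af,Af\rangle_H=\langle A^*Af,f\rangle_D\le \|A^*Af\|_{X^*}\|f\|_X\le a^2\|f\|_X^2 .
\]
The inequality in the middle is legitimate precisely because of the hypothesis $\mathfrak{R}(A^*A)\subset X^*$: the pairing on $D$ coincides with the $X^*$--$X$ pairing on $D$. Taking square roots gives (1) with the same constant $a$. I expect this step to be the main (if mild) obstacle, since it is the only place where one must check carefully that the algebraic pairing is compatible with the $X^*$--$X$ pairing. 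The rest is bookkeeping. Finally, once (1) holds, $A$ is uniformly continuous on the dense subspace $D$, so it extends to a bounded operator $X\to H$; likewise, by (3), $A^*A$ extends to a bounded operator $X\to X^*$.
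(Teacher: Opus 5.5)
Your proof is correct and is the standard Ginibre--Velo argument: (1)$\Leftrightarrow$(2) by density of $D$ and testing against the unit ball of $H$, (1)$\wedge$(2)$\Rightarrow$(3) by composition, and (3)$\Rightarrow$(1) via $\|Af\|^2=\langle A^*Af,f\rangle_D\le\|A^*Af\|_{X^*}\|f\|_X$, with the same constant $a$ kept at each step. The paper gives no proof of its own; it quotes the lemma from \cite{GV}, whose argument yours reproduces.
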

\begin{lemma} \label{DuilatyXX} Let $H$, $D$ and two triplets $(X_i,A_i,a_i), i=1,2$, satisfy any of the conditions of Lemma \ref{equ}. Then for all choices of $i,j=1,2$, $\mathfrak{R}(A_i^*A_j) \subset X_i^*$, and for all $f \in D$, we have
	\begin{equation*}
	\|A_i^* A_jf\|_{X_i^*} \leqslant a_i a_j\|f\|_{X_j}.
	\end{equation*}
\end{lemma}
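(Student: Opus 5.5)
The plan is to compose the two bounds already provided by Lemma \ref{equ}, with no further analysis. Fix $i,j\in\{1,2\}$ and $f\in D$. Since the triplet $(X_j,A_j,a_j)$ satisfies the conditions of Lemma \ref{equ}, condition (1) holds for it. Hence $v:=A_jf$ is a well-defined element of the Hilbert space $H$, and
\begin{equation*}
\|A_jf\|\leq a_j\|f\|_{X_j}.
\end{equation*}

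Next we use the triplet $(X_i,A_i,a_i)$. By Lemma \ref{equ} it also satisfies condition (2). So $A_i^*$ maps all of $H$ into $X_i^*$: for every $w\in H$, the algebraic functional $A_i^*w\in D_a^*$, given by $f'\mapsto\langle w,A_if'\rangle_H$, is continuous for the $X_i$-norm on the dense subspace $D$. It therefore extends uniquely to an element of $X_i^*$, and
\begin{equation*}
\|A_i^*w\|_{X_i^*}\leq a_i\|w\|.
\end{equation*}
Taking $w=v=A_jf\in H$ gives $A_i^*A_jf\in X_i^*$, which is the range inclusion $\mathfrak{R}(A_i^*A_j)\subset X_i^*$. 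Chaining the two inequalities gives
\begin{equation*}
\|A_i^*A_jf\|_{X_i^*}\leq a_i\|A_jf\|\leq a_ia_j\|f\|_{X_j}.
\end{equation*}

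The only point needing care, and it is mild, is the identification of $A_i^*A_jf$, a priori an element of the algebraic dual $D_a^*$, with an element of $X_i^*$. The pairing $\langle A_i^*A_jf,g\rangle_D=\langle A_jf,A_ig\rangle_H$ for $g\in D$ is bounded by $a_ia_j\|f\|_{X_j}\|g\|_{X_i}$, by Cauchy--Schwarz together with condition (1) for both triplets. So this functional extends by density of $D$ in $X_i$, and its norm in $X_i^*$ equals its norm as a functional on $D$. This computation alone already gives the estimate, so we may present it directly. Finally, if desired, the operator $A_i^*A_j$ then extends by continuity from $D$ to a bounded operator from $X_j$ to $X_i^*$, exactly as in the last assertion of Lemma \ref{equ}.
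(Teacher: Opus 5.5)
Your proposal is correct and is essentially the standard argument. The paper gives no proof of its own and only cites Ginibre--Velo \cite{GV}, where the lemma follows exactly as you do it: chain condition (2) for $(X_i,A_i,a_i)$ with condition (1) for $(X_j,A_j,a_j)$ to get $\|A_i^*A_jf\|_{X_i^*}\leq a_i\|A_jf\|\leq a_ia_j\|f\|_{X_j}$. Your direct variant via the bound $|\langle A_jf,A_ig\rangle_H|\leq a_ia_j\|f\|_{X_j}\|g\|_{X_i}$ on the dense subspace $D$ is also valid, and it needs only condition (1) for both triplets.
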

\begin{lemma}\label{L1LW}  Let $H$ be a Hilbert space, $I$ be an interval of $\mathbb{R}$,    $X \subset \mathcal{S}'(I \times \mathbb{R}^n)$ be a Banach space which is stable under time restriction, and let $X$ and $A$ satisfy (any of) the conditions of Lemma \ref{equ}. Then the operator $A^*A$ is a bounded operator from $L_t^1(I,H)$ to $X^*$ and from $X$ to $L_t^\infty(I,H)$.
\end{lemma}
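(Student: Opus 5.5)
We interpret the lemma in the concrete Ginibre--Velo framework in which it will be applied. There is a unitary group $U(t)$ on $H$, and the operator $A$ acts on space--time functions by
\begin{equation*}
Af=\int_I U(-s)f(s)\,ds .
\end{equation*}
Its adjoint is then $(A^*v)(t)=U(t)v$, so that
\begin{equation*}
(A^*Af)(t)=\int_I U(t)U(-s)f(s)\,ds=\int_I U(t-s)f(s)\,ds .
\end{equation*}
The strategy is to obtain both bounds by factoring $A^*A$ through $H$. In each case one factor is handled by the hypothesis (conditions (1)--(2) of Lemma \ref{equ}) and the other by the unitarity of $U$ together with Minkowski's integral inequality.

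\textbf{Bound $L^1_t(I,H)\to X^*$.} Take $f\in L^1_t(I,H)$ and first consider $Af=\int_I U(-s)f(s)\,ds$ as an $H$-valued Bochner integral. By Minkowski's inequality and unitarity,
\begin{equation*}
\|Af\|_H\le \int_I \|U(-s)f(s)\|_H\,ds=\|f\|_{L^1_t(I,H)} .
\end{equation*}
Condition (2) of Lemma \ref{equ} gives $\|A^*v\|_{X^*}\le a\|v\|_H$. Applying this with $v=Af$ yields
\begin{equation*}
\|A^*Af\|_{X^*}\le a\|f\|_{L^1_t(I,H)} .
\end{equation*}
Strictly speaking, $Af$ here is the extension of $A$ to $L^1_t(I,H)$ given by the integral formula. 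One checks on a dense class, for instance simple functions $\sum_j \chi_{I_j}(t)h_j$, that this extension agrees with the abstract $A$, and then passes to the limit using the displayed $L^1$ bound.

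\textbf{Bound $X\to L^\infty_t(I,H)$.} Take $f\in D$ and fix $t\in I$. Then $(A^*Af)(t)=U(t)(Af)$, so
\begin{equation*}
\|(A^*Af)(t)\|_H=\|Af\|_H\le a\|f\|_X
\end{equation*}
by condition (1) of Lemma \ref{equ}. Taking the supremum over $t$ gives $\|A^*Af\|_{L^\infty_t(I,H)}\le a\|f\|_X$, and density of $D$ extends the estimate to all of $X$.

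\textbf{Role of stability under time restriction, and the main obstacle.} The main technical point is making these identifications rigorous, which is where stability of $X$ under time restriction enters. Since $\|\chi_J f\|_X\le\|f\|_X$ for subintervals $J\subset I$, the restricted operator $A\chi_J$ inherits the bound $a$. This lets us approximate $f$ and pass from $D$ to general elements of $X$, and also to the truncated operators $\int_{s<t}U(t-s)f(s)\,ds$ needed later for inhomogeneous estimates. The duality pairing $\langle A^*v,f\rangle_D=\langle v,Af\rangle_H$ then justifies interchanging the $t$-integration with the pairing via Fubini. I expect this measure-theoretic bookkeeping, and not the estimates themselves, to require the most care; the norm bounds above are immediate once the representation of $A^*A$ is justified.
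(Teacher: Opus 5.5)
Your argument is correct and is the standard Ginibre--Velo argument. The paper gives no proof of its own and simply quotes the lemma from \cite{GV}. There the two bounds follow, as in your proposal, from factoring $A^*A=A^*\circ A$ through $H$, using $\|Af\|_H\le\|f\|_{L^1_t(I,H)}$ and $\|(A^*v)(t)\|_H=\|v\|_H$. Your explicit choice $Af=\int_I U(-s)f(s)\,ds$ is exactly the setting the paper's abbreviated statement leaves implicit, and without it the lemma would not even make sense.

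The only inaccuracy is in your last paragraph. Stability of $X$ under time restriction plays no role in the two bounds stated here, since density of $D$ already gives the extension to $X$. In \cite{GV} that hypothesis is needed only for the retarded operator $\int_{s<t}U(t-s)f(s)\,ds$, which this version of the lemma omits.
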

\section{Proof of main results}\label{Proof of Main}
This section is devoted to proving Theorem \ref{ResultTime}, i.e., the decay estimate for a class of
dispersive semigroups $U_k(t)=e^{it\phi(\sqrt{\mathcal{L}_a})}\psi(2^{-k}\sqrt{\mathcal{L}_a})$ on $\mathbb{R}^{2+n}$.
 \begin{proof}[Proof of Theorem \ref{ResultTime}]
We will use the stationary phase argument to prove Theorem \ref{ResultTime}. Above all, we prove part \textbf{(B)}. Note that 
\begin{equation*}
U_k(t)f(x,y)=\int_{\mathbb{R}^{2+n}}\int_0^\infty e^{it\phi(\lambda)} \psi(2^{-k}\lambda)dE_{\sqrt{\mathcal{L}_a}}(\lambda; x,y; x',y')f(x',y')dx'dy',
\end{equation*}
whose kernel $I_k(x,y;x',y')$ is denoted by 
\begin{equation*}
I_k(x,y;x',y')=\int_0^\infty e^{it\phi(\lambda)} \psi(2^{-k}\lambda)dE_{\sqrt{\mathcal{L}_a}}(\lambda; x,y; x',y'),
\end{equation*}
and it suffices to prove the following statements: for $k \leq 0$, assume that $\phi$ satisfies (C2), then
	\begin{equation*}
	\|I_k\|_{L^\infty(\mathbb{R}^{2+n}\times\mathbb{R}^{2+n})} \lesssim |t|^{-\theta}2^{k\left(n+2-m_2\theta\right)},\,0 \leq \theta \leq \frac{n+1}{2};
	\end{equation*}
 In addition, if $\phi$ satisfies (C4), then
	\begin{equation*}
	\|I_k\|_{L^\infty(\mathbb{R}^{2+n}\times\mathbb{R}^{2+n})} \lesssim |t|^{-\frac{n+1+\theta}{2}}2^{k\left(n+2-\frac{m_2(n+1+\theta)}{2}-\frac{\theta(\alpha_2-m_2)}{2}\right)},\,   0 \leq \theta \leq 1.
	\end{equation*}
Indeed, by \eqref{spectral}, let us divide the kernel $I_k$ into three parts by
\begin{equation*}
I_k(x,y;x',y')=\frac{1}{\pi^{n+1}}\sum_{\pm}\left(I_k^{\pm,1}(x-x',y-y')+I_k^{\pm,2}(x,y;x',y')+I_k^{\pm,3}(x,y;x',y')\right), 
\end{equation*}
where
\begin{equation*}
I_k^{\pm,1}(x,y)=\int_0^\infty e^{it\phi(\lambda)} \psi(2^{-k}\lambda)\lambda^{n+1}a_{\pm}(\lambda|(x,y)|)e^{\pm i\lambda|(x,y)|}d\lambda,
\end{equation*}
\begin{equation*}
I_k^{\pm,2}(x,y;x',y')=\int_{|\theta-\theta'|}^\pi \int_0^\infty  e^{it\phi(\lambda)} \psi(2^{-k}\lambda)\lambda^{n+1}a_{\pm}(\lambda|\overset{\rightarrow}{n}_s^1|)e^{\pm i\lambda|\overset{\rightarrow}{n}_s^1|}d\lambda\; A_a(s;\theta,\theta')ds,
\end{equation*}
and
\begin{equation*}
I_k^{\pm,3}(x,y;x',y')=\int_0^\infty \int_0^\infty  e^{it\phi(\lambda)} \psi(2^{-k}\lambda)\lambda^{n+1}a_{\pm}(\lambda|\overset{\rightarrow}{n}_s^2|)e^{\pm i\lambda|\overset{\rightarrow}{n}_s^2|}d\lambda\; B_a(s;\theta,\theta')ds.
\end{equation*}
Then our aim becomes to prove the following statements: for $k \leq 0$, assume that $\phi$ satisfies (C2), then for $\ell=2,3$ and $0 \leq \theta \leq \frac{n+1}{2}$
	\begin{align}
	\|I^{\pm,1}_k\|_{L^\infty(\mathbb{R}^{2+n})} &\lesssim |t|^{-\theta}2^{k\left(n+2-m_2\theta\right)},\label{1a}\\
    \|I^{\pm,\ell}_k\|_{L^\infty(\mathbb{R}^{2+n}\times\mathbb{R}^{2+n})} &\lesssim |t|^{-\theta}2^{k\left(n+2-m_2\theta\right)}. \label{ia}
	\end{align}
 In addition, if $\phi$ satisfies (C4), then for $\ell=2,3$ and $0 \leq \theta \leq 1$
 \begin{align}
	\|I^{\pm,1}_k\|_{L^\infty(\mathbb{R}^{2+n})} &\lesssim |t|^{-\frac{n+1+\theta}{2}}2^{k\left(n+2-\frac{m_2(n+1+\theta)}{2}-\frac{\theta(\alpha_2-m_2)}{2}\right)},\label{1aa}\\
    \|I^{\pm,\ell}_k\|_{L^\infty(\mathbb{R}^{2+n}\times\mathbb{R}^{2+n})} &\lesssim |t|^{-\frac{n+1+\theta}{2}}2^{k\left(n+2-\frac{m_2(n+1+\theta)}{2}-\frac{\theta(\alpha_2-m_2)}{2}\right)}. \label{iaa}
	\end{align}
Next, it suffices to prove \eqref{1a} and \eqref{1aa}. Indeed, once these two estimates are established, \eqref{ia} and \eqref{iaa} follow by replacing $|(x,y)|$ by $|\overset{\rightarrow}{n}_s^\ell|, \ell=2,3$ and invoking \eqref{6.20}, together with arguments analogous to those used for \eqref{1a} and \eqref{1aa}. To prove  \eqref{1a} and \eqref{1aa},
we rewrite the term $I_k^{\pm,1}$ by
\begin{equation*}
I_k^{\pm,1}(x,y)=2^{k(n+2)}\mathcal{I}_k^{\pm,1}(x,y)
\end{equation*}
where
\begin{equation*}
\mathcal{I}_k^{\pm,1}(x,y)=\int_0^\infty e^{it\phi(2^k\lambda)} \psi(\lambda)\lambda^{n+1}a_{\pm}(2^k\lambda|(x,y)|)e^{\pm i2^k\lambda|(x,y)|}d\lambda,
\end{equation*}
and it is equivalent to prove the following statements: 
for $k \leq 0$, assume that $\phi$ satisfies (C2), then for $0 \leq \theta \leq \frac{n+1}{2}$
	\begin{equation}
	\|\mathcal{I}^{\pm,1}_k\|_{L^\infty(\mathbb{R}^{2+n})}\lesssim \left(2^{km_2}|t|\right)^{-\theta}\label{11a},
	\end{equation}
 In addition, if $\phi$ satisfies (C4), then $0 \leq \theta \leq 1$
 \begin{equation}
	\|\mathcal{I}^{\pm,1}_k\|_{L^\infty(\mathbb{R}^{2+n})} \lesssim  \left(2^{km_2}|t|\right)^{-\frac{n+1+\theta}{2}}2^{\frac{k\theta(m_2-\alpha_2)}{2}}.\label{11aa}
	\end{equation}
We will prove the above statements in two cases, where we use the vanishing property of $a_\pm$ at the origin and the infinity, respectively.

\textbf{\underline{Case 1.} $2^k|(x,y)|\leq 1$:} In this case, we shall use the vanishing property of $a_\pm$ at the origin. 
By denoting $D_\lambda=\frac{1}{i2^kt\phi'(2^k\lambda)}\frac{d}{d\lambda}$, we notice that
\begin{equation*}
D_\lambda\left(e^{it\phi(2^k\lambda)}\right)=e^{it\phi(2^k\lambda)}, \; D_\lambda^*f=\frac{i}{2^kt}\frac{d}{d\lambda}\left(\frac{1}{\phi'(2^k\lambda)}f\right).
\end{equation*}
For $j \in\mathbb{N}$ and $\lambda\in [1/2,1]$, it follows from the assumption (C2) that
\begin{equation}\label{phase-prime}
 \frac{d^j}{d\lambda^j}\left(\frac{1}{\phi'(2^k\lambda)}\right)\leq C_j 2^{-k(m_2-1)}.
\end{equation}
For any $Q\in \mathbb N$, using integration by part $Q$-times, we obtain
 \begin{equation*}
	\begin{aligned}
	\mathcal{I}_k^{\pm,1}(x,y)&=\int_0^\infty e^{it\phi(2^k\lambda)} \psi(\lambda)\lambda^{n+1}a_{\pm}(2^k\lambda|(x,y)|)e^{\pm i2^k\lambda|(x,y)|}d\lambda\\
    &=\int_0^\infty D^Q_\lambda\left(e^{it\phi(2^k\lambda)}\right)\psi(\lambda)\lambda^{n+1}a_{\pm}(2^k\lambda|(x,y)|)e^{\pm i2^k\lambda|(x,y)|}d\lambda\\
    &=\int_0^\infty e^{it\phi(2^k\lambda)}\left(D_\lambda^*\right)^Q\left(\psi(\lambda)\lambda^{n+1}a_{\pm}(2^k\lambda|(x,y)|)e^{\pm i2^k\lambda|(x,y)|}\right)d\lambda\\
    &=\left(\frac{i}{2^kt}\right)^{ Q}\sum_{j=0}^Q\sum_{\beta\in \Lambda_Q^j} C_{Q,j}\int_0^\infty  e^{it\phi(2^k\lambda)}\\
     &\qquad \qquad\times \prod_{i=1}^Q\frac{d^{\beta_i}}{d\lambda^{\beta_i}}\left(\frac{1}{\phi'(2^k\lambda)}\right)\frac{d^{Q-j}}{d\lambda^{Q-j}}\left(\psi(\lambda)\lambda^{n+1}a_{\pm}(2^k\lambda|(x,y)|)e^{\pm i2^k\lambda|(x,y)|} \right)\,d\lambda, 
    \end{aligned}
	\end{equation*}
 where $\Lambda_Q^j=\{\beta=(\beta_1,\beta_2,\cdots,\beta_Q)\in\{0,1,2,\cdots,j\}^Q: \sum\limits_{i=1}^Q\beta_i=j\}$. Now using the estimates \eqref{6.19} and \eqref{phase-prime}
 \begin{equation}\label{small}
|\mathcal{I}_k^{\pm,1}(x,y)|\leq C_Q \left(2^k|t|\right)^{-Q}2^{-kQ(m_2-1)}=C_Q\left(2^{km_2}|t|\right)^{-Q}, 
 \end{equation} for any $Q\in \mathbb N$. Using the Riesz-Thorin interpolation between different $Q\in\mathbb{N}$ in \eqref{small}, for any $\theta\geq0$, we have
\begin{equation}\label{sharp-s-small}
|\mathcal{I}_k^{\pm,1}(x,y)|\leq C_\theta\left(2^{km_2}|t|\right)^{-\theta}.
 \end{equation}  
 This completes the proof of \eqref{11a} in Case 1.   
 
\textbf{\underline{Case 2.}} $2^k|(x,y)|>1$. In this case, we apply the decay property of $a_\pm$ at the infinity. We can rewrite
\begin{align*}
\mathcal{I}_k^{\pm,1}(x,y)&=\int_0^\infty e^{it\phi(2^k\lambda)} \psi(\lambda)\lambda^{n+1}a_{\pm}(2^k\lambda|(x,y)|)e^{\pm i2^k\lambda|(x,y)|}d\lambda\\
   &=\int_0^\infty  e^{i\left(t\phi(2^k\lambda)\pm 2^k\lambda|(x,y)|\right)}\psi(\lambda)\lambda^{n+1}a_{\pm}(2^k\lambda|(x,y)|)d\lambda\\
   &=\int_0^\infty   e^{it\phi_\pm(2^k\lambda)}\psi(\lambda)\lambda^{n+1}a_{\pm}(2^k\lambda|(x,y)|)d\lambda,
\end{align*}
where $\phi_\pm(\lambda)=\phi(\lambda)\pm \frac{\lambda |(x,y)|}{t}$.
Without loss of generality, we can assume that $t>0$ and $\phi'(r)>0$. Now we will estimate $\mathcal{I}_k^{+,1}(x,y)$ and $\mathcal{I}_k^{-,1}(x,y)$ separately.

For $\mathcal{I}_k^{+,1}(x,y)$, note that $\phi'_+(\lambda)\geq \phi'(\lambda)$ and \eqref{phase-prime} also holds true if we replace $\phi$ by $\phi_+$. Then, proceeding similarly as Case 1, combined with \eqref{6.19},  for any 
 $\theta\geq0,$ we obtain
\begin{equation}\label{sharp-s-big-plus}
|\mathcal{I}_k^{+,1}(x,y)|\leq C_\theta\left(2^{km_2}|t|\right)^{-\theta}.
 \end{equation}
 For $\mathcal{I}_k^{-,1}(x,y)$, $\phi'_-(2^k\lambda)=\phi'(2^k\lambda)-\frac{|(x,y)|}{t}$. Noticing that if $|(x,y)|=t\phi'(2^k\lambda)$, then $\phi'_-(2^k\lambda)=0$. We continue our discussion by considering the following three sub-cases:
\begin{itemize}
\item When $|(x,y)|\geq 2t\sup\limits_{\lambda\in[1/2,2]}\phi'(2^k\lambda)$.
 In this case, we have $|\phi'_-(2^k\lambda)|=\frac{1}{t}\left(|(x,y)|-t\phi'(2^k\lambda)\right)\geq \phi'(2^k\lambda)$ and  the estimate 
 \eqref{phase-prime} also holds true if we replace $\phi$ by $\phi_-$. Then, analogous to $\mathcal{I}_k^{+,1}(x,y)$, for any $\theta\geq0$, we obtain
\begin{equation}\label{sharp-s-big-minus1}
|\mathcal{I}_k^{-,1}(x,y)|\leq C_\theta\left(2^{km_2}|t|\right)^{-\theta}.
 \end{equation}
\item  When $|(x,y)|\leq \frac{1}{2}t\inf\limits_{\lambda\in[1/2,2]}\phi'(2^k\lambda)$. In this case, we have $\phi'_-(2^k\lambda)=\frac{1}{t}\left(t\phi'(2^k\lambda)-|(x,y)|\right)\geq \frac{1}{2}\phi'(2^k\lambda)$ and the estimate \eqref{phase-prime} also holds true if we replace $\phi$ by $\phi_-$. Analogous to $\mathcal{I}_k^{+,1}(x,y)$, for any $\theta\geq0$, we obtain
\begin{equation}\label{sharp-s-big-minus2}
|\mathcal{I}_k^{-,1}(x,y)|\leq C_\theta\left(2^{km_2}|t|\right)^{-\theta}.
 \end{equation}
\item When $\frac{t}{2}\inf\limits_{\lambda\in[1/2,2]}\phi'(2^k\lambda)\leq |(x,y)|\leq 2t\sup\limits_{\lambda\in[1/2,2]}\phi'(2^k\lambda)$. In this case, we see that $|(x,y)|\sim t2^{k(m_2-1)}$. Moreover, it derives from \eqref{6.19} that
 \begin{equation}\label{sharp-s-big-minus3}
  |\mathcal{I}_k^{-,1}(x,y)|\lesssim \min\left(1,\left(2^k|(x,y)|\right)^{-(n+1)/2}\right)\lesssim \min\left(1,\left(2^{km_2}|t|\right)^{-(n+1)/2}\right).
 \end{equation}
Hence, for all $0\leq\theta\leq(n+1)/2$, we get
\begin{equation}\label{sharp}
|\mathcal{I}_k^{-,1}(x,y)|\lesssim \left(2^{km_2}|t|\right)^{-\theta}
 \end{equation}
 \end{itemize}
Combining \eqref{sharp-s-big-plus}, \eqref{sharp-s-big-minus1}, \eqref{sharp-s-big-minus2} and \eqref{sharp}, we complete the proof of \eqref{11a} in Case 2.

If (C4) holds in addition, we have $\left|\frac{d^2}{d\lambda^2}\left(\phi_-(2^k\lambda)\right)\right|=2^{2k}|\phi''(2^k\lambda)|\sim 2^{k\alpha_2}$ in the last sub-case discussed above. It follows from Lemma \ref{phase} and \eqref{6.19} that
\begin{equation}\label{better-sharp-s-big-minus3}
\begin{aligned}
|\mathcal{I}_k^{-,1}(x,y)|&\lesssim |t2^{k\alpha_2}|^{-1/2}\left(2^k|(x,y)|\right)^{-(n+1)/2}\\
&\lesssim |t2^{k\alpha_2}|^{-1/2}\left(2^{km_2}|t|\right)^{-(n+1)/2}\\
&\lesssim \left(2^{km_2}|t|\right)^{-(n+2)/2}2^{k(m_2-\alpha_2)/2}.
\end{aligned}
 \end{equation}
For any $0\leq \theta\leq 1$,   since $(n+1+\theta)/2=(1-\theta)(n+1)/2+\theta (n+2)/2$, by an interpolation between \eqref{sharp} (with $\theta=(n+1)/2$) and \eqref{better-sharp-s-big-minus3}, we obtain
\begin{equation}\label{Better-sharp-s-big-minus3}
|\mathcal{I}_k^{-,1}(x,y)|\lesssim \left(2^{km_2}|t|\right)^{-(n+1+\theta)/2}2^{k\theta(m_2-\alpha_2)/2}.
 \end{equation}
Noting that $\alpha_2\geq m_2$, from \eqref{sharp-s-small}, \eqref{sharp-s-big-plus}, \eqref{sharp-s-big-minus1}, \eqref{sharp-s-big-minus2} and \eqref{Better-sharp-s-big-minus3}, we have
\begin{equation}\label{better-sharp}
|\mathcal{I}_k^{\pm,1}(x,y)|\lesssim \left(2^{km_2}|t|\right)^{-(n+1+\theta)/2}2^{k\theta(m_2-\alpha_2)/2},
 \end{equation}
 for all $0\leq\theta\leq 1$, which completes the proof of \eqref{11aa}.\\

Next, we just need to prove part \textbf{(C)}, since the proof of part \textbf{(A)} is similar to part \textbf{(B)}.

From the results in part \textbf{(B)}, it is easy to see that
\begin{equation*}
\|U^{low}(t)f\|_{L^\infty(\mathbb{R}^{2+n})}\leq \sum_{k\leq 0}\left\|U_k(t)f\right\|_{L^\infty(\mathbb{R}^{2+n})}\lesssim \sum_{k\leq 0}2^{k(2+n)}\left\|f\right\|_{L^1(\mathbb{R}^{2+n})}\lesssim \left\|f\right\|_{L^1(\mathbb{R}^{2+n})}.
\end{equation*}
Therefore, if (C2) holds, to prove \eqref{res-sum1} in part \textbf{(C)}, it is equivalent to get
\begin{equation*}
\|U^{low}(t)f\|_{L^\infty(\mathbb{R}^{2+n})}\lesssim |t|^{-\theta}
\left\|f\right\|_{L^1(\mathbb{R}^{2+n})},\quad \theta=\min\left(\frac{2+n}{m_2},\frac{1+n}{2}\right).
\end{equation*}
First, we consider the case  when $\theta=\frac{1+n}{2}<\frac{2+n}{m_2}$, i.e., $2+n-\frac{(1+n)m_2}{2}>0$. Now applying part \textbf{(B)}, we obtain
 \begin{equation*}
\begin{aligned}
\|U^{low}(t)(t)f\|_{L^\infty(\mathbb{R}^{2+n})}&\leq \sum_{k\leq 0}\left\|U_k(t)f\right\|_{L^\infty(\mathbb{R}^{2+n})}\\
&\lesssim\sum_{k\leq  0}|t|^{-\frac{1+n}{2}}2^{k\left(2+n-\frac{(1+n)m_2}{2}\right)}\left\|f\right\|_{L^1(\mathbb{R}^{2+n})}\\
&\lesssim |t|^{-\frac{1+n}{2}}  \left\|f\right\|_{L^1(\mathbb{R}^{2+n})}.
\end{aligned}
\end{equation*}
Therefore, it suffices to consider the case $\theta=\frac{2+n}{m_2}\leq\frac{1+n}{2}$, i.e., $2+n-m_2(1+n)/2\leq 0$. From the proof of of part \textbf{(B)}, 
\begin{align*}\label{U2}
U^{low}(t)f(x,y)&=\sum_{k\leq 0}U_k(t)f(x,y)\\
&=\int_{\mathbb{R}^{2+n}}I(x,y;x',y')f(x',y')dx'dy',
\end{align*}
with the kernel
\begin{align*}
I(x,y;x',y')&=\sum_{k\leq 0}I_k(x,y;x',y')\\
            &=\frac{1}{\pi^{n+1}}\sum_{k\leq 0}\sum_{\pm}\left(I_k^{\pm,1}(x-x',y-y')+I_k^{\pm,2}(x,y;x',y')+I_k^{\pm,3}(x,y;x',y')\right)\\
            &=\frac{1}{\pi^{n+1}}\sum_{\pm}\left(I^{\pm,1}(x-x',y-y')+I^{\pm,2}(x,y;x',y')+I^{\pm,3}(x,y;x',y')\right),
\end{align*}
where $I^{\pm,\ell}=\sum_{k\leq 0}I_k^{\pm,\ell}$, $\ell=1,2,3$. By Young's inequality, it suffices to prove
\begin{equation*}
|I(x,y;x',y')|\lesssim |t|^{-\frac{2+n}{m_2}},\quad \forall (x,y),(x,y')\in \mathbb{R}^{2+n}.
\end{equation*}
Argued similarly as part {(B)}, we only need to prove
\begin{equation*}
|I^{\pm,1}(x,y)|\lesssim |t|^{-\frac{2+n}{m_2}},\quad \forall (x,y)\in \mathbb{R}^{2+n}.
\end{equation*}
Indeed, we write
\begin{equation*}
I^{\pm,1}(x,y)=\sum_{k\leq0}I_k^{\pm,1}(x,y)=\sum_{k\leq0}2^{k(2+n)}\mathcal{I}_k^{\pm,1}(x,y).
\end{equation*}
From the proof of of part \textbf{(B)}, we know that if $0\geq k_0\in\mathbb{Z} $ such that $2^{k_0}|(x,y)|>1$ and $|(x,y)|\sim t2^{k_0(m_2-1)}$, then
\begin{equation}\label{sim}
 \begin{aligned}
  2^{k_0(2+n)}\left|\mathcal{I}_k^{\pm,1}(x,y)\right|
  &\lesssim 2^{k_0(2+n)}\left(2^{k_0m_2}|t|\right)^{-(1+n)/2}=\\
  &=|t|^{-(1+n)/2}2^{k_0\left(2+n-m_2(1+n)/2\right)}\\
  &\lesssim |t|^{-(1+n)/2}\left(\frac{2^{k_0}|(x,y)|}{|t|}\right)^{\left(2+n-m_2(1+n)/2\right)\frac{1}{m_2}}\\
  &\lesssim |t|^{-\frac{n+2}{m_2}}.
  \end{aligned}
 \end{equation}
When $|k-k_0|>C\gg1$, it holds true that
 \begin{equation}\label{gg}
\left|\mathcal{I}_k^{\pm,1}(x,y)\right|\lesssim\left(2^{km_2}|t|\right)^{-\alpha},\;\forall \alpha\geq0.
 \end{equation}
Hence, by \eqref{sim} and \eqref{gg}, we get
 \begin{equation}\label{last}
 \begin{aligned}
  &\quad \left|I^{\pm,1}(x,y)\right|\\
&\leq\sum_{k\leq0}2^{k(2+n)}\left|\mathcal{I}_k^{\pm,1}(x,y)\right|\\
  &\leq \left(\sum_{|k-k_0|\leq C}+\sum_{|k-k_0|>C,\;2^k\leq|t|^{-\frac{1}{m_2}}}+\sum_{|k-k_0|>C,\;2^k>|t|^{-\frac{1}{m_2}}}\right)2^{k(2+n)}\left|\mathcal{I}_k^{\pm,1}(x,y)\right|\\
  &\lesssim |t|^{-\frac{2+n}{m_2}}+\sum_{2^k\leq|t|^{-\frac{1}{m_2}}}2^{k(2+n)}+\sum_{2^k>|t|^{-\frac{1}{m_2}}}|t|^{-\alpha}2^{k(2+n-m_2\alpha)}\\
  &\lesssim |t|^{-\frac{2+n}{m_2}},
  \end{aligned}
  \end{equation}
 where the last inequality is obtained by applying Lemma \ref{Sum} and choosing $\alpha>\frac{2+n}{m_2}$. Hence, we complete the proof of \eqref{res-sum1} in part \textbf{(C)}.

 In addition, if (C4) holds, we can argue in a similar way as previously.
 \end{proof}

\section{On the boundary Strichartz estimates for the fractional Schr\"odinger equation}\label{boundary section}
This section is devoted to boundary Strichartz estimates for the
fractional Schr\"odinger operator $e^{it\mathcal{L}_a^\nu}$. More precisely, we investigate estimates on the mixed time-space function space $L_t^q(\mathbb{R},L_{x,y}^\infty(\mathbb{R}^{2+n}))$. For convenience, we denote $n_\nu=\begin{cases}1,\nu=\frac{1}{2},\\
0,\nu\neq\frac{1}{2}.\end{cases}$
\begin{theorem}\label{boundary}
Let $\nu>0$, $n\geq 1$ and $2<q<\infty$. Then we have
\begin{equation}\label{boundary estimate}
\|e^{it\mathcal{L}_a^\nu} f\|_{L_t^q(\mathbb{R},L_{x,y}^\infty(\mathbb{R}^{2+n}))}\lesssim \|f\|_{\dot{H}_a^{\frac{n+2}{2}-\frac{2\nu}{q}}},
\end{equation}
if either of the following conditions holds
\begin{itemize}
\item $\nu=\frac{1}{2}$ and $\frac{2}{q}<\frac{n+1}{2}$.
\item $\nu\not=\frac{1}{2}$ and $\frac{2}{q}\leq\frac{n+2}{2}$.
\end{itemize}
\end{theorem}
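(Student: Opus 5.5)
Following the approach of \cite{GLNY}, the plan is to obtain \eqref{boundary estimate} by a $TT^*$ argument on frequency-localized pieces, feeding in the decay bounds of Theorem \ref{ResultTime} with $\phi(r)=r^{2\nu}$. This phase is homogeneous, so (C1)--(C4) hold with $m_1=m_2=\alpha_1=\alpha_2=2\nu$ whenever $\nu\neq\frac12$. For $\nu=\frac12$ we have $\phi''\equiv0$, so only (C1)--(C2) are available with $m=1$.

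\textbf{Step 1: the localized estimate.} By scaling it suffices to prove, for each $k\in\mathbb{Z}$,
\begin{equation*}
\|e^{it\mathcal{L}_a^\nu}\psi(2^{-k}\sqrt{\mathcal{L}_a})f\|_{L^q_tL^\infty_{x,y}}\lesssim 2^{k(\frac{n+2}{2}-\frac{2\nu}{q})}\|f\|_{L^2}.
\end{equation*}
We apply Lemma \ref{equ} with $A=U_k(\cdot)^*$ and use a slightly fattened cutoff $\tilde\psi$. The operator $T T^*$ then has kernel $U_k(t-s)$, acting from $L^{q'}_tL^1$ to $L^q_tL^\infty$. Theorem \ref{ResultTime}(A),(B) gives the two bounds
\begin{equation*}
\|U_k(t)\|_{L^1\to L^\infty}\lesssim \min\bigl(2^{k(n+2)},\,|t|^{-\sigma}2^{k(n+2-2\nu\sigma)}\bigr).
\end{equation*}
Here $\sigma=\frac{n+2}{2}$ when $\nu\neq\frac12$ (take $\theta=1$ in the (C3)/(C4) bound; since $\alpha=m$ the extra factor disappears), and $\sigma$ is any value $\le\frac{n+1}{2}$ when $\nu=\frac12$. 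We choose $\sigma>2/q$; this is possible in the wave case because $\frac2q<\frac{n+1}{2}$.

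Combining the two bounds, the kernel is dominated by $2^{k(n+2)}(1+2^{2\nu k}|t|)^{-\sigma}$. This function lies in $L^{q/2}_t$ with norm $2^{k(n+2)}2^{-2\nu k\cdot 2/q}$, provided $\sigma q/2>1$, which holds since $\sigma>2/q$. Young's inequality in $t$, valid because $1+\frac1q=\frac1{q'}+\frac{2}{q}$, then yields $\|TT^*\|\lesssim 2^{k(n+2-4\nu/q)}$. Taking the square root gives Step 1.

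The endpoint $\frac2q=\frac{n+2}{2}$ in the case $\nu\neq\frac12$ is critical and needs more care. When $n\ge1$ and $q>2$ we have $\frac2q<1<\frac{n+2}{2}$, so this endpoint never actually occurs. The only real case split is therefore strict versus nonstrict inequality for the wave equation.

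\textbf{Step 2: summing over $k$, the main obstacle.} Littlewood--Paley square-function arguments fail in $L^\infty$, so the pieces cannot simply be summed. Summing the localized bounds directly with the triangle inequality would only give a Besov norm $\dot B^{s}_{2,1,a}$, not $\dot H^s_a$.

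Following \cite{GLNY}, I would instead run the $TT^*$ argument on the full operator $e^{it\mathcal{L}_a^\nu}\mathcal{L}_a^{-s/2}$, with $s=\frac{n+2}{2}-\frac{2\nu}{q}$. Its $TT^*$ has kernel
\begin{equation*}
K(t)=\sum_k 2^{-2ks}\,U_k(t)
\end{equation*}
(up to cutoffs). Using the two bounds from Step 1, we estimate
\begin{equation*}
\|K(t)\|_{L^1\to L^\infty}\lesssim \sum_k 2^{k(n+2-2s)}\min\bigl(1,(2^{2\nu k}|t|)^{-\sigma}\bigr)\sim |t|^{-\frac{n+2-2s}{2\nu}}=|t|^{-2/q}.
\end{equation*}
The geometric sums converge on both sides by Lemma \ref{Sum}: the small-$k$ side needs $n+2-2s>0$, which holds, and the large-$k$ side needs $2\nu\sigma>n+2-2s=4\nu/q$, which is again $\sigma>2/q$.

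Finally, the Hardy--Littlewood--Sobolev inequality in time, applied to $|t|^{-2/q}$ with $0<\frac2q<1$ (true since $q>2$), bounds $TT^*$ from $L^{q'}_tL^1$ to $L^q_tL^\infty$. This avoids summing in $k$ altogether and yields \eqref{boundary estimate} via Lemma \ref{equ}.
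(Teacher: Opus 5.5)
Your proposal is correct and follows essentially the paper's route. Step 2 is exactly the paper's Lemma \ref{key lemma}: you sum the dyadic decay bounds of Theorem \ref{ResultTime} with $\theta=0$ at low frequency and $\theta=\frac{n+2-n_\nu}{2}$ (resp.\ $\sigma\in(\frac2q,\frac{n+1}{2}]$ for $\nu=\frac12$) at high frequency to get the $|t|^{-2/q}$ bound for $e^{it\mathcal{L}_a^\nu}\mathcal{L}_a^{-s}$, and then apply $TT^*$ with Hardy--Littlewood--Sobolev, so your Step 1 is not actually needed. Your observation that $\frac2q<1\le\frac{n+1}{2}$ for $q>2$, $n\ge1$ is right: it shows the paper's separate endpoint analysis for $q=\frac{4}{n+2}$ is vacuous, and in fact both hypotheses on $q$ are automatic, so there is no genuine case split even for $\nu=\frac12$.
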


The proof of Theorem \ref{boundary} rests on the dispersive estimates in the following key lemma.
\begin{lemma}\label{key lemma}
Let $\nu>0$, $n\geq 1$ and $2<q<\infty$. Then we have
\begin{equation}
\|e^{it\mathcal{L}_a^\nu} \mathcal{L}_a^{\frac{2\nu}{q}-\frac{n+2}{2}}f\|_{L_{x,y}^\infty(\mathbb{R}^{2+n})}\lesssim |t|^{-\frac{2}{q}}\|f\|_{L_{x,y}^1(\mathbb{R}^{2+n})},
\end{equation}
if either of the following conditions holds
\begin{itemize}
\item $\nu=\frac{1}{2}$ and $\frac{2}{q}<\frac{n+1}{2}$.
\item $\nu\not=\frac{1}{2}$ and $\frac{2}{q}\leq\frac{n+2}{2}$.
\end{itemize}
\end{lemma}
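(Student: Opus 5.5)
The plan is to reduce the lemma to the frequency-localized decay estimates of Theorem \ref{ResultTime}, applied with the homogeneous phase $\phi(\lambda)=\lambda^{2\nu}$, and then to sum over dyadic frequencies. Since $\mathcal{L}_a^\nu=\phi(\sqrt{\mathcal{L}_a})$, this phase satisfies (C1) and (C2) with $m_1=m_2=2\nu$. When $\nu\neq\frac12$ we have $\phi''(\lambda)=2\nu(2\nu-1)\lambda^{2\nu-2}\neq0$, so (C3) and (C4) also hold, with $\alpha_1=\alpha_2=2\nu$. The operator $\mathcal{L}_a^{\frac{2\nu}{q}-\frac{n+2}{2}}$ has spectral symbol $\lambda^{\beta}$ with $\beta=\frac{4\nu}{q}-(n+2)$. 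We write
\[
e^{it\mathcal{L}_a^\nu}\mathcal{L}_a^{\frac{2\nu}{q}-\frac{n+2}{2}}=\sum_{k\in\mathbb{Z}}2^{k\beta}\,e^{it\phi(\sqrt{\mathcal{L}_a})}\tilde\psi(2^{-k}\sqrt{\mathcal{L}_a}),\qquad \tilde\psi(\lambda)=\lambda^\beta\psi(\lambda).
\]
The new cutoff $\tilde\psi$ is still smooth and supported in $[1/2,1]$. The proof of Theorem \ref{ResultTime} uses only these two properties of $\psi$, namely in the integrations by parts and in van der Corput's Lemma \ref{phase}. Hence all its kernel bounds hold for $\tilde U_k(t)=e^{it\phi(\sqrt{\mathcal{L}_a})}\tilde\psi(2^{-k}\sqrt{\mathcal{L}_a})$, for every $k\in\mathbb{Z}$ (parts (A) and (B) coincide here because $\phi$ is homogeneous). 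By Young's inequality it then suffices to bound the kernel in $L^\infty$. As in the proof of Theorem \ref{ResultTime}, we reduce to the first piece $I^{\pm,1}$ of the spectral measure \eqref{spectral}; the pieces $I^{\pm,2}$ and $I^{\pm,3}$ follow by replacing $|(x,y)|$ with $|\vec n^1_s|$ or $|\vec n^2_s|$ and integrating in $s$ using \eqref{6.20}.

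For $\nu\neq\frac12$ the combined estimates give $\|\tilde U_k(t)\|_{L^1\to L^\infty}\lesssim |t|^{-\sigma}2^{k(n+2-2\nu\sigma)}$ for every $\sigma\in[0,\frac{n+2}{2}]$. The first estimate of (B) covers $\sigma\le\frac{n+1}{2}$, and the second, with $\theta=1$ and $\alpha_2=m_2$, covers $\sigma=\frac{n+2}{2}$; we interpolate between them. Multiplying by $2^{k\beta}$, the $k$-th term is bounded by $|t|^{-\sigma}2^{2\nu k(\frac2q-\sigma)}$.

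In the interior case $0<\frac2q<\frac{n+2}{2}$, we split the sum at $2^{k}\sim|t|^{-1/(2\nu)}$. For the low frequencies we use some $\sigma_1<\frac2q$, for the high frequencies some $\sigma_2>\frac2q$, with both in the admissible range. Lemma \ref{Sum} then yields $|t|^{-\sigma_1}\big(|t|^{-1/(2\nu)}\big)^{2\nu(\frac2q-\sigma_1)}=|t|^{-2/q}$, and the same for $\sigma_2$. The case $\nu=\frac12$ is handled identically using only the first estimates of (A) and (B), which give the range $\sigma\le\frac{n+1}{2}$; this is exactly why the strict condition $\frac2q<\frac{n+1}{2}$ appears there.

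The main obstacle is the endpoint $\frac2q=\frac{n+2}{2}$ for $\nu\neq\frac12$. There no $\sigma_2>\frac2q$ is available, and the bound with $\sigma=\frac{n+2}{2}$ gives a constant $|t|^{-2/q}$ for every $k$, which does not sum. For this case we follow the pointwise argument of part (C), i.e.\ \eqref{sim}--\eqref{last}. Fix $(x,y)$. Only the indices with $|k-k_0|\le C$ can be near a stationary point, where $k_0$ is determined by $|(x,y)|\sim t2^{k_0(2\nu-1)}$. For these boundedly many $k$ we use the van der Corput bound \eqref{better-sharp-s-big-minus3} (the case $\theta=1$ of \eqref{better-sharp}), which gives $2^{k\beta}2^{k(n+2)}(2^{2\nu k}|t|)^{-\frac{n+2}{2}}=|t|^{-2/q}$. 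For all other $k$, the integration by parts in Case 1 and in the nonstationary subcases of Case 2 gives $|\mathcal I_k^{\pm,1}|\lesssim (2^{2\nu k}|t|)^{-N}$ for every $N$. We take $N=0$ when $2^k\le|t|^{-1/(2\nu)}$, which leaves the sum of $2^{k\cdot 4\nu/q}$ and is fine since $\frac{4\nu}{q}>0$. We take $N$ large when $2^k>|t|^{-1/(2\nu)}$. Both tails are then $\lesssim|t|^{-2/q}$ by Lemma \ref{Sum}. Finally, we must check that this localization around $k_0$ is uniform for the $A_a$ and $B_a$ parts after integrating in $s$. Since those parts have the same structure with $|(x,y)|$ replaced by $|\vec n^\ell_s|$, and \eqref{6.20} gives integrability in $s$, we expect this to go through.
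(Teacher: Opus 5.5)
Your proposal is correct and follows essentially the same route as the paper. The non-endpoint range comes from summing the dyadic bounds of Theorem \ref{ResultTime} split at $2^k\sim|t|^{-1/(2\nu)}$, and the endpoint $\frac2q=\frac{n+2}{2}$, $\nu\neq\frac12$, comes from a pointwise kernel analysis: only the boundedly many scales with $|(x,y)|\sim t\,2^{k(2\nu-1)}$ (finitely many precisely because $2\nu-1\neq0$) are treated by van der Corput, and all others by integration by parts. The only cosmetic difference is that the paper first rescales the cutoff to $\psi(2^{-k}t^{\frac{1}{2\nu}}\sqrt{\mathcal{L}_a})$, reducing to $t=1$ with the split at $k=0$, whereas you keep $t$ and reuse the part \textbf{(C)} bookkeeping directly.
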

\begin{proof}
By Theorem \ref{ResultTime}, we have for $0\leq \theta\leq \frac{n+2-n_\nu}{2}$,
\begin{equation*}
\|e^{it\mathcal{L}_a^\nu} \psi(2^{-k}\sqrt{\mathcal{L}_a})f\|_{L_{x,y}^\infty(\mathbb{R}^{2+n})}\lesssim |t|^{-\theta}2^{k(n+2-2\nu\theta)}\|f\|_{L_{x,y}^1(\mathbb{R}^{2+n})},
\end{equation*}
and then
\begin{equation}\label{decay-local}
\|e^{it\mathcal{L}_a^\nu} \psi(2^{-k}\sqrt{\mathcal{L}_a})\mathcal{L}_a^{\frac{2\nu}{q}-\frac{n+2}{2}}f\|_{L_{x,y}^\infty(\mathbb{R}^{2+n})}\lesssim |t|^{-\theta}2^{2k\nu(\frac{2}{q}-\theta)}\|f\|_{L_{x,y}^1(\mathbb{R}^{2+n})}.
\end{equation}
If $\nu=\frac{1}{2}$ and $\frac{2}{q}<\frac{n+1}{2}$, or if $\nu\not=\frac{1}{2}$ and $\frac{2}{q}<\frac{n+2}{2}$, i.e., $\frac{2}{q}<\frac{n+2-n_\nu}{2}$, we get
\begin{align*}
&\quad \|e^{it\mathcal{L}_a^\nu} \mathcal{L}_a^{\frac{2\nu}{q}-\frac{n+2}{2}}f\|_{L_{x,y}^\infty(\mathbb{R}^{2+n})}\\
&\leq \sum_{k\in\mathbb{Z}} \|e^{it\mathcal{L}_a^\nu} \psi(2^{-k}\sqrt{\mathcal{L}_a})\mathcal{L}_a^{\frac{2\nu}{q}-\frac{n+2}{2}}f\|_{L_{x,y}^\infty(\mathbb{R}^{2+n})}\\
&\lesssim \left(\sum_{2^k\leq |t|^{-\frac{1}{2\nu}}}2^{\frac{4k\nu}{q}}+\sum_{2^k\geq|t|^{-\frac{1}{2\nu}}}|t|^{-\frac{n+2-n_\nu}{2}}2^{2k\nu(\frac{2}{q}-\frac{n+2-n_\nu}{2})}\right)\|f\|_{L_{x,y}^1(\mathbb{R}^{2+n})}\\
&\lesssim |t|^{-\frac{2}{q}}\|f\|_{L_{x,y}^1(\mathbb{R}^{2+n})},
\end{align*}
where the second inequality comes from \eqref{decay-local} and the last inequality is obtained by applying Lemma \ref{Sum}. 

Therefore, it remains to prove the case: $\nu\not=\frac{1}{2}$ and $q=\frac{4}{n+2}$. Without loss of generality, we assume $t>0$.
We rewrite
\begin{equation*}
e^{it\mathcal{L}_a^\nu} \mathcal{L}_a^{\frac{2\nu}{q}-\frac{n+2}{2}}f(x,y)=\sum_{k\in\mathbb{Z}}e^{it\mathcal{L}_a^\nu} \psi(2^{-k}t^\frac{1}{2\nu}\sqrt{\mathcal{L}_a})\mathcal{L}_a^{\frac{2\nu}{q}-\frac{n+2}{2}}f(x,y),
\end{equation*}
where 
\begin{align*}
&\quad e^{it\mathcal{L}_a^\nu} \psi(2^{-k}t^\frac{1}{2\nu}\sqrt{\mathcal{L}_a})\mathcal{L}_a^{\frac{2\nu}{q}-\frac{n+2}{2}}f(x,y)\\
&=\int_{\mathbb{R}^{2+n}}\int_0^\infty e^{it\lambda^{2\nu}} \psi(2^{-k}t^\frac{1}{2\nu}\lambda)\lambda^{\frac{4\nu}{q}-(n+2)}dE_{\sqrt{\mathcal{L}_a}}(\lambda; x,y; x',y')f(x',y')dx'dy',
\end{align*}
with the kernel
\begin{equation*}
J_k(x,y;x',y')=\int_0^\infty e^{it\lambda^{2\nu}} \psi(2^{-k}t^\frac{1}{2\nu}\lambda)\lambda^{\frac{4\nu}{q}-(n+2)}dE_{\sqrt{\mathcal{L}_a}}(\lambda; x,y; x',y').
\end{equation*}
By the triangle inequality, it suffices to prove
\begin{equation}\label{key kernel}
\sum_{k\in\mathbb{Z}}|J_k(x,y;x',y')|\lesssim |t|^{-\frac{2}{q}}, \quad\forall (x,y),(x',y')\in\mathbb{R}^{2+n}.
\end{equation}
By \eqref{spectral}, let us divide the kernel $J_k$ into three parts by
\begin{equation*}
J_k(x,y;x',y')=\frac{1}{\pi^{n+1}}\sum_{\pm}\left(J_k^{\pm,1}(x-x',y-y')+J_k^{\pm,2}(x,y;x',y')+J_k^{\pm,3}(x,y;x',y')\right), 
\end{equation*}
where
\begin{equation*}
J_k^{\pm,1}(x,y)=\int_0^\infty  e^{it\lambda^{2\nu}} \psi(2^{-k}t^\frac{1}{2\nu}\lambda)\lambda^{\frac{4\nu}{q}-1}a_{\pm}(\lambda|(x,y)|)e^{\pm i\lambda|(x,y)|}d\lambda,
\end{equation*}
\begin{equation*}
J_k^{\pm,2}(x,y;x',y')=\int_{|\theta-\theta'|}^\pi \int_0^\infty   e^{it\lambda^{2\nu}} \psi(2^{-k}t^\frac{1}{2\nu}\lambda)\lambda^{\frac{4\nu}{q}-1}a_{\pm}(\lambda|\overset{\rightarrow}{n}_s^1|)e^{\pm i\lambda|\overset{\rightarrow}{n}_s^1|}d\lambda A_a(s;\theta,\theta')ds,
\end{equation*}
and
\begin{equation*}
J_k^{\pm,3}(x,y;x',y')=\int_0^\infty \int_0^\infty   e^{it\lambda^{2\nu}} \psi(2^{-k}t^\frac{1}{2\nu}\lambda)\lambda^{\frac{4\nu}{q}-1}a_{\pm}(\lambda|\overset{\rightarrow}{n}_s^2|)e^{\pm i\lambda|\overset{\rightarrow}{n}_s^2|}d\lambda B_a(s;\theta,\theta')ds.
\end{equation*}
Our aim \eqref{key kernel} becomes to prove the following estimates
\begin{align}
\sum_{k\in\mathbb{Z}}|J_k^{\pm,1}(x,y)|&\lesssim |t|^{-\frac{2}{q}},\label{1}\\
\sum_{k\in\mathbb{Z}}|J_k^{\pm,2}(x,y;x',y')|&\lesssim |t|^{-\frac{2}{q}},\label{2}\\
\sum_{k\in\mathbb{Z}}|J_k^{\pm,3}(x,y;x',y')|&\lesssim |t|^{-\frac{2}{q}},\label{3}
\end{align}
for any $(x,y),(x',y')\in\mathbb{R}^{2+n}$. We just prove \eqref{1}. In fact, replacing $|(x,y)|$ by $|\overset{\rightarrow}{n}_s^\ell|, \ell=2,3$ and arguing similarly as \eqref{1}, we can prove \eqref{2} and \eqref{3}. We rewrite $J_k^{\pm,1}$ by
\begin{equation*}
J_k^{\pm,1}(x,y)=|t|^{-\frac{2}{q}}\mathcal{J}_k^{\pm,1}(t^{-\frac{1}{2\nu}}x,t^{-\frac{1}{2\nu}}y)
\end{equation*}
where
\begin{align*}
\mathcal{J}_k^{\pm,1}(x,y)&=2^\frac{4k\nu}{q}\int_0^\infty  e^{i2^{2k\nu}\lambda^{2\nu}} \psi(\lambda)\lambda^{\frac{4\nu}{q}-1}a_{\pm}(2^k\lambda|(x,y)|)e^{\pm i2^k\lambda|(x,y)|}d\lambda\\
&=2^\frac{4k\nu}{q}\int_0^\infty  e^{i2^{2k\nu}\lambda^{2\nu}} \widetilde{\psi}(\lambda)a_{\pm}(2^k\lambda|(x,y)|)e^{\pm i2^k\lambda|(x,y)|}d\lambda,
\end{align*}
with $\widetilde{\psi}(\lambda)=\psi(\lambda)\lambda^{\frac{4\nu}{q}-1}$ such that $\supp \widetilde{\psi}\subseteq [1/2,1]$.

Therefore, the estimate \eqref{1} is equivalent to
\begin{equation}\label{J}
\sum_{k\in\mathbb{Z}}|\mathcal{J}_k^{\pm,1}(x,y)|\lesssim 1,\quad \forall (x,y)\in\mathbb{R}^{2+n}.
\end{equation}
As the proof of part \textbf{(A)} in Theorem \ref{ResultTime}, we shall discuss the above statement in two cases.

\textbf{\underline{Case 1.}} $2^k|(x,y)|\leq 1$. For any $Q\in\mathbb{N}$, using integration by part $Q$-times, we have
\begin{align*}
|\mathcal{J}_k^{\pm,1}(x,y)|&=\left|2^\frac{4k\nu}{q}\int_0^\infty  \left[\left(\frac{\partial_\lambda}{i2\nu 2^{2k\nu}\lambda^{2\nu-1}}\right)^Q e^{i2^{2k\nu}\lambda^{2\nu}}\right] \widetilde{\psi}(\lambda)a_{\pm}(2^k\lambda|(x,y)|)e^{\pm i2^k\lambda|(x,y)|}d\lambda\right|\\
&\lesssim 2^\frac{4k\nu}{q}2^{-2kQ\nu}.
\end{align*}
Hence, choosing $Q$ sufficiently large and applying Lemma \ref{Sum}, it yields
\begin{equation}\label{Case 1}
\begin{aligned}
\sum_{2^k|(x,y)|\leq 1}|\mathcal{J}_k^{\pm,1}(x,y)|&=\sum_{\substack{k\geq 1\\ 2^k|(x,y)|\leq 1}}|\mathcal{J}_k^{\pm,1}(x,y)|+\sum_{\substack{k\leq 0\\ 2^k|(x,y)|\leq 1}}|\mathcal{J}_k^{\pm,1}(x,y)|\\
&\lesssim \sum_{k\geq1}2^\frac{4k\nu}{q}2^{-2kQ\nu}+\sum_{k\leq 0}2^\frac{4k\nu}{q}\\
&\lesssim 1.
\end{aligned}
\end{equation}
\textbf{\underline{Case 2.}} $2^k|(x,y)|>1$. We rewrite $\mathcal{J}_k^{\pm,1}(x,y)$ by
\begin{align*}
\mathcal{J}_k^{\pm,1}(x,y)
&=2^\frac{4k\nu}{q}\int_0^\infty  e^{i\Phi^\pm_k(\lambda;x,y)}\widetilde{\psi}(\lambda)a_{\pm}(2^k\lambda|(x,y)|)d\lambda,
\end{align*} 
with $\Phi^\pm_k(\lambda;x,y)=2^{2k\nu}\lambda^{2\nu} \pm 2^k\lambda|(x,y)|$. We continue our discussion by considering the following three sub-cases:
\begin{itemize}
\item $2^{k(2\nu-1)}\sim |(x,y)|>2^{-k}$. In this case, $2^k|(x,y)|\sim 2^{2k\nu}$ and $|\partial_\lambda^2\Phi^\pm_k(\lambda;x,y)|\sim 2^{2k\nu}$. By the Van der Corput lemma, it indicates that
\begin{equation}
|\mathcal{J}_k^{\pm,1}(x,y)|\lesssim 2^\frac{4k\nu}{q} \left(2^{2k\nu}\right)^{-\frac{1}{2}}\left(2^k|(x,y)|\right)^{-\frac{n+1}{2}}\sim 2^{k\nu(\frac{4}{q}-(n+2))}=1.
\end{equation}
Hence, we have
\begin{equation}\label{Subcase 1}
\sum_{\substack{2^{k(2\nu-1)}\sim |(x,y)|\\ 2^k|(x,y)|>1}}|\mathcal{J}_k^{\pm,1}(x,y)|\lesssim 1.
\end{equation}
\item $|(x,y)|\gg 2^{k(2\nu-1)}$. In this case, $|\partial_\lambda\Phi^\pm_k(\lambda;x,y)|\gtrsim 2^k|(x,y)|\gg 2^{2k\nu}$. For any $Q\in\mathbb{N}$, using integration by part $Q$-times, we have for any $Q\in\mathbb{N}$,
\begin{equation}
\begin{aligned}
|\mathcal{J}_k^{\pm,1}(x,y)|&=\left|2^\frac{4k\nu}{q}\int_0^\infty  \left[\left(\frac{\partial_\lambda}{i\partial_\lambda\Phi^\pm_k(\lambda;x,y)}\right)^Q e^{i\Phi^\pm_k(\lambda;x,y)}\right] \widetilde{\psi}(\lambda)a_{\pm}(2^k\lambda|(x,y)|)d\lambda\right|\\
&\lesssim 2^\frac{4k\nu}{q}\left(2^k|(x,y)|\right)^{-Q-\frac{n+1}{2}}\lesssim 2^\frac{4k\nu}{q}2^{-2k\nu(Q+\frac{n+1}{2})}.
\end{aligned}
\end{equation}
Hence, argued as \eqref{Case 1}, we have
\begin{equation}\label{Subcase 2}
\sum_{\substack{|(x,y)|\gg 2^{k(2\nu-1)}\\ 2^k|(x,y)|>1}}|\mathcal{J}_k^{\pm,1}(x,y)|\lesssim 1.
\end{equation}
\item $|(x,y)|\ll 2^{k(2\nu-1)}$. In this case, $1<2^k|(x,y)|\ll 2^{2k\nu}$ yields $k\geq 0$. Besides, $|\partial_\lambda\Phi^\pm_k(\lambda;x,y)|\gtrsim 2^{2k\nu}$. For any $Q\in\mathbb{N}$, using integration by part $Q$-times, we have for any $Q\in\mathbb{N}$
\begin{equation}
\begin{aligned}
|\mathcal{J}_k^{\pm,1}(x,y)|&=\left|2^\frac{4k\nu}{q}\int_0^\infty  \left[\left(\frac{\partial_\lambda}{i\partial_\lambda\Phi^\pm_k(\lambda;x,y)}\right)^Q e^{i\Phi^\pm_k(\lambda;x,y)}\right] \widetilde{\psi}(\lambda)a_{\pm}(2^k\lambda|(x,y)|)d\lambda\right|\\
&\lesssim 2^\frac{4k\nu}{q}\left(2^{2k\nu}\right)^{-Q}\left(2^k|(x,y)|\right)^{-\frac{n+1}{2}}\lesssim 2^\frac{4k\nu}{q}\left(2^{2k\nu}\right)^{-Q}.
\end{aligned}
\end{equation}
Hence, argued as \eqref{Case 1}, we have
\begin{equation}\label{Subcase 3}
\sum_{\substack{|(x,y)|\ll 2^{k(2\nu-1)}\\ 2^k|(x,y)|>1}}|\mathcal{J}_k^{\pm,1}(x,y)|\lesssim 1.
\end{equation}
\end{itemize}
Therefore, by \eqref{Subcase 1}, \eqref{Subcase 2} and \eqref{Subcase 3}, we obtain
\begin{equation}\label{Case 2}
\begin{aligned}
&\quad\sum_{2^k|(x,y)|>1}|\mathcal{J}_k^{\pm,1}(x,y)|\\
&=\sum_{\substack{2^{k(2\nu-1)}\sim |(x,y)|\\ 2^k|(x,y)|>1}}|\mathcal{J}_k^{\pm,1}(x,y)|+\sum_{\substack{|(x,y)|\gg 2^{k(2\nu-1)}\\ 2^k|(x,y)|>1}}|\mathcal{J}_k^{\pm,1}(x,y)|+\sum_{\substack{|(x,y)|\ll 2^{k(2\nu-1)}\\ 2^k|(x,y)|>1}}|\mathcal{J}_k^{\pm,1}(x,y)|\\
&\lesssim 1.
\end{aligned}
\end{equation}
The estimate \eqref{J} comes immediately from \eqref{Case 1} and \eqref{Case 2}, which completes our proof.\\
\end{proof}

\begin{proof}[Proof of Theorem \ref{boundary}] Note that we can rewrite  \eqref{boundary estimate} by
\begin{equation*}
\|e^{it\mathcal{L}_a^\nu} \mathcal{L}_a^{\frac{2\nu}{q}-\frac{n+2}{4}} f\|_{L_t^q(\mathbb{R},L_{x,y}^\infty(\mathbb{R}^{2+n}))}\lesssim \|f\|_{L_{x,y}^2(\mathbb{R}^{2+n})},
\end{equation*}
By $TT^*$-method, the above estimate is equivalent to
\begin{equation*}
\left\|\int_\mathbb{R} e^{i(t-s)\mathcal{L}_a^\nu}\mathcal{L}_a^{\frac{2\nu}{q}-\frac{n+2}{2}} F(s,\cdot)ds\right\|_{L_t^q(\mathbb{R},L_{x,y}^\infty(\mathbb{R}^{2+n}))}\lesssim \|F\|_{L_t^{q'}(\mathbb{R},L_{x,y}^1(\mathbb{R}^{2+n}))}.
\end{equation*}
Indeed, it follows from the dispersive estimate in Lemma \ref{key lemma} and the Hardy-Littlewood-Sobolev inequality that
\begin{align*}
\left\|\int_\mathbb{R} e^{i(t-s)\mathcal{L}_a^\nu}\mathcal{L}_a^{\frac{2\nu}{q}-\frac{n+2}{2}} F(s,\cdot)ds\right\|_{L_t^q(\mathbb{R},L_{x,y}^\infty(\mathbb{R}^{2+n}))}&\leq \left\|\int_\mathbb{R} \|e^{i(t-s)\mathcal{L}_a^\nu}\mathcal{L}_a^{\frac{2\nu}{q}-\frac{n+2}{2}} F(s,\cdot)\|_{L_{x,y}^\infty(\mathbb{R}^{2+n})}ds\right\|_{L_t^q(\mathbb{R})}\\
&\lesssim \left\|\int_\mathbb{R} |t-s|^{-\frac{2}{q}}\|F(s,\cdot)\|_{L_{x,y}^1(\mathbb{R}^{2+n})}ds\right\|_{L_t^q(\mathbb{R})}\\
&\lesssim \|F\|_{L_t^{q'}(\mathbb{R},L_{x,y}^1(\mathbb{R}^{2+n}))},
\end{align*}
which completes the proof. 
\end{proof}

\section{Applications}\label{sec5}
In this section, we provide several applications of Theorem \ref{ResultTime} by considering some particular smooth function $\phi$. In particular, we prove Strichartz estimates for some concrete wave equations related to the operator $\mathcal{L}_a$ by using Theorem \ref{ResultTime}. We start this section by recalling a variety of the abstract Keel-Tao's Strichartz estimates theorem proved in Zhang \cite{Zhang2019}. This is an analogue of the semiclassical Strichartz estimates for Schr\"odinger in \cite{Koch-Tataru-Zworski, Zworski}.
\begin{proposition}[\cite{Zhang2019}]\label{Keel-Tao argument} Let $(X,\mathcal{M},\nu)$ be a $\sigma$-finite measured space and $U: \mathbb{R}\rightarrow B(L^2(X,\mathcal{M},\mu))$ be a weakly measurable map satisfying, for some constants $C$, $\alpha\in\mathbb{R}$, $\sigma,h>0$,
\begin{align}
\|U(t)\|_{L^2\rightarrow L^2}&\leq C,\quad t\in\mathbb{R},\label{energy estimate}\\
\|U(t)U(s)^*f\|_{L^\infty}&\leq Ch^{-\alpha}(h+|t-s|)^{-\sigma}\|f\|_{L^1}.\label{1 to infty}
\end{align}
Then for every pair $q,r\in[1,\infty]$ such that $(q,r,\sigma)\neq (2,\infty,1)$ and
\begin{equation*}
\frac{1}{q}+\frac{\sigma}{r}\leq\frac{\sigma}{2}, \quad q\geq2,
\end{equation*}
there exists a constant $\widetilde{C}$ only depending on $C,\sigma, q$ and $r$ such that
\begin{equation*}
\left(\int_\mathbb{R} \|U(t)u_0\|^q_{L^r}dt\right)^\frac{1}{q}\leq \widetilde{C}\Lambda(h)\|u_0\|_{L^2},
\end{equation*}
where $\Lambda(h)=h^{-(\alpha+\sigma)(\frac{1}{2}-\frac{1}{r})+\frac{1}{q}}$.
\end{proposition}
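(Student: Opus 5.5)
This is the abstract Keel--Tao type theorem of \cite{Zhang2019}, so I would prove it by rescaling time to reduce to the standard Keel--Tao machinery. The key idea is that \eqref{1 to infty} is a dispersive bound with a built-in time scale $h$: for $|t-s|\le h$ it reads $h^{-\alpha-\sigma}$, and for $|t-s|\ge h$ it reads $h^{-\alpha}|t-s|^{-\sigma}$.

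\textbf{Step 1: rescaling.} Define $\widetilde U(\tau)=U(h\tau)$. Then \eqref{energy estimate} still gives $\|\widetilde U(\tau)\|_{L^2\to L^2}\le C$. The dispersive bound becomes
\[
\|\widetilde U(\tau)\widetilde U(\tau')^*f\|_{L^\infty}\le C h^{-\alpha-\sigma}(1+|\tau-\tau'|)^{-\sigma}\|f\|_{L^1}.
\]
Setting $V(\tau)=h^{(\alpha+\sigma)/2}\widetilde U(\tau)$, we obtain $\|V(\tau)\|_{L^2\to L^2}\le Ch^{(\alpha+\sigma)/2}$ and $\|V(\tau)V(\tau')^*\|_{L^1\to L^\infty}\le C(1+|\tau-\tau'|)^{-\sigma}$.

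\textbf{Step 2: interpolation.} Riesz--Thorin interpolation between these two bounds gives
\[
\|V(\tau)V(\tau')^*\|_{L^{r'}\to L^r}\lesssim h^{(\alpha+\sigma)(1-2/r)/2\cdot 2\cdot\frac12}\cdots
\]
It is cleaner to bound $\widetilde U$ directly. Interpolating $\|\widetilde U\widetilde U^*\|_{2\to2}\le C^2$ with the $L^1\to L^\infty$ bound gives
\[
\|\widetilde U(\tau)\widetilde U(\tau')^*\|_{r'\to r}\lesssim h^{-(\alpha+\sigma)(1-2/r)}(1+|\tau-\tau'|)^{-\sigma(1-2/r)}.
\]

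\textbf{Step 3: $TT^*$ in time.} Apply the $TT^*$ argument (Lemma~\ref{equ}). We need
\[
\Big\|\int \widetilde U(\tau)\widetilde U(\tau')^*F(\tau')\,d\tau'\Big\|_{L^q_\tau L^r}\lesssim h^{-(\alpha+\sigma)(1-2/r)}\|F\|_{L^{q'}_\tau L^{r'}}.
\]
The time kernel is $(1+|\tau|)^{-\sigma(1-2/r)}$. Since $\frac1q\le\frac\sigma2-\frac\sigma r$, this kernel is bounded by $|\tau|^{-2/q}$, so when $\frac2q=\sigma(1-\frac2r)$ with $q>2$, Hardy--Littlewood--Sobolev gives the bound. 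In the non-sharp case $\frac2q<\sigma(1-\frac2r)$, the kernel lies in $L^{q/2}$, and Young's inequality gives the bound. The sharp endpoint $q=2$, $\sigma\neq1$ or $r<\infty$, requires the Keel--Tao bilinear dyadic decomposition argument \cite{KT}. That argument uses only the energy bound and the truncated decay $(1+|\tau|)^{-\sigma}$, which is weaker than $|\tau|^{-\sigma}$, so it carries over verbatim after rescaling. This endpoint is the main obstacle; I would quote Keel--Tao's proof rather than redo it, and this is precisely why the excluded triple $(2,\infty,1)$ appears.

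\textbf{Step 4: undoing the rescaling.} From Step 3, $\|\widetilde U(\tau)u_0\|_{L^q_\tau L^r}\lesssim h^{-(\alpha+\sigma)(\frac12-\frac1r)}\|u_0\|_{L^2}$. Changing variables back via $t=h\tau$ gives
\[
\|U(t)u_0\|_{L^q_tL^r}=h^{1/q}\|\widetilde U(\tau)u_0\|_{L^q_\tau L^r}.
\]
This yields exactly $\Lambda(h)=h^{-(\alpha+\sigma)(\frac12-\frac1r)+\frac1q}$, and the constant depends only on $C,\sigma,q,r$.
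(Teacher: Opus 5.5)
The paper gives no proof of this proposition; it is quoted from \cite{Zhang2019}. Your route is the standard way this semiclassical form of Keel--Tao is obtained: set $t=h\tau$, treat $\widetilde U(\tau)=U(h\tau)$ with Keel--Tao, then scale back to pick up the factor $h^{1/q}$. Your non-endpoint argument is complete, and it tracks the $h$-dependence explicitly. It consists of Riesz--Thorin, which gives $\|\widetilde U(\tau)\widetilde U(\tau')^*\|_{L^{r'}\to L^r}\lesssim h^{-(\alpha+\sigma)(1-\frac2r)}(1+|\tau-\tau'|)^{-\sigma(1-\frac2r)}$, followed by Hardy--Littlewood--Sobolev in the sharp case with $q>2$ and Young's inequality in the non-sharp case. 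This covers every admissible triple except the Keel--Tao endpoint.

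The gap is that endpoint, $q=2$, $r=\frac{2\sigma}{\sigma-1}$, $\sigma>1$. Keel--Tao's theorem assumes $h$-independent constants in the energy and decay bounds, but your decay constant is $Ch^{-(\alpha+\sigma)}$. Quoting their proof ``verbatim'' therefore does not, by itself, yield the power $h^{-(\alpha+\sigma)(\frac12-\frac1r)}$. Your attempted normalization $V=h^{(\alpha+\sigma)/2}\widetilde U$ does not help, because it only moves the $h$-dependence into the energy bound.

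A clean fix is to rescale the measure rather than the operator. Put $\widetilde\mu=h^{-(\alpha+\sigma)}\mu$. Adjoints, $L^2\to L^2$ operator norms and $L^\infty$ norms are unchanged, and $\|f\|_{L^1(\mu)}=h^{\alpha+\sigma}\|f\|_{L^1(\widetilde\mu)}$. So on $L^2(X,\widetilde\mu)$ the family $\widetilde U$ satisfies the energy bound $C$ and the decay bound $C(1+|\tau-\tau'|)^{-\sigma}\le C|\tau-\tau'|^{-\sigma}$, both with $h$-free constants. Keel--Tao then gives $\|\widetilde U u_0\|_{L^q_\tau L^r(\widetilde\mu)}\le \widetilde C\|u_0\|_{L^2(\widetilde\mu)}$. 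Converting back with $\|g\|_{L^p(\widetilde\mu)}=h^{-(\alpha+\sigma)/p}\|g\|_{L^p(\mu)}$ produces exactly $\widetilde C\, h^{-(\alpha+\sigma)(\frac12-\frac1r)}\|u_0\|_{L^2(\mu)}$. This works for all admissible $(q,r)$ at once, so your Steps 2--3 become optional.

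Relatedly, your remark that the truncated decay is ``weaker'' than $|\tau|^{-\sigma}$ is backwards. It is the stronger hypothesis, and that is exactly why the untruncated Keel--Tao endpoint theorem applies directly after this normalization.
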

We also need to introduce the Littlewood-Paley square function inequality associated with the operator $\mathcal{L}_a$, see \cite{Zhang-Zhang}.
\begin{proposition}[LP square function inequality]\label{LP inequality} For $1<p<\infty$, there exists constants $c_p$ and $C_p$ depending on $p$ such that
\begin{equation*}
c_p\|f\|_{L^p(\mathbb{R}^{2+n})}\leq \left\|\left(\sum_{k\in\mathbb{Z}}|\psi(2^{-k}\sqrt{\mathcal{L}_a})f|^2\right)^\frac{1}{2}\right\|_{L^p(\mathbb{R}^{2+n})}\leq C_p\|f\|_{L^p(\mathbb{R}^{2+n})}.
\end{equation*}
\end{proposition}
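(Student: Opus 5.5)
The plan is to deduce the square function inequality from a spectral multiplier theorem of Hörmander type for $\mathcal{L}_a$, and then apply the usual Rademacher randomization and duality argument. The key input is the Gaussian upper bound
$$|e^{-t\mathcal{L}_a}(x,y;x',y')|\leq C t^{-\frac{n+2}{2}}e^{-\frac{|(x,y)-(x',y')|^2}{8t}}$$
recalled in Section \ref{sec2}. The setting is simple: $\mathbb{R}^{2+n}$ with Lebesgue measure is a doubling space of homogeneous dimension $n+2$, and $\mathcal{L}_a$ is a nonnegative self-adjoint operator. For such operators with Gaussian heat kernel bounds, the spectral multiplier theorem of Duong--Ouhabaz--Sikora (in the spirit of Alexopoulos and Hebisch) applies. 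I will invoke it rather than reprove it: if $F$ is bounded and $\sup_{t>0}\|\eta F(t\,\cdot)\|_{W^{s,\infty}}<\infty$ for some $s>\frac{n+2}{2}$ and a fixed cutoff $\eta\in C_c^\infty(0,\infty)$, then $F(\sqrt{\mathcal{L}_a})$ is of weak type $(1,1)$ and bounded on $L^p$ for $1<p<\infty$. Its norm is controlled by that Hörmander-type quantity.

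\textbf{Upper bound.} Fix signs $\varepsilon=(\varepsilon_k)_{k\in\mathbb{Z}}\in\{\pm1\}^{\mathbb{Z}}$ and set $F_\varepsilon(\lambda)=\sum_k\varepsilon_k\psi(2^{-k}\lambda)$. Since $\supp\psi\subset[1/2,1]$, at each $\lambda$ at most two terms are nonzero, and the same holds on each dilated piece $\eta(\lambda)F_\varepsilon(t\lambda)$. Hence the Hörmander norm of $F_\varepsilon$ is bounded independently of $\varepsilon$, and so
$$\Big\|\sum_k\varepsilon_k\psi(2^{-k}\sqrt{\mathcal{L}_a})f\Big\|_{L^p}\leq C_p\|f\|_{L^p}$$
uniformly in $\varepsilon$. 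I will first prove this for finite sums and then pass to the limit. Next I replace $\varepsilon_k$ by independent Rademacher variables, raise to the power $p$ and take expectations. Fubini together with Khintchine's inequality gives
$$\mathbb{E}\Big|\sum_k\varepsilon_k\psi_kf\Big|^p\sim\Big(\sum_k|\psi_kf|^2\Big)^{p/2},$$
pointwise in $(x,y)$. Integrating yields the right-hand inequality.

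\textbf{Lower bound.} Choose $\tilde\psi\in C_c^\infty((1/4,2))$ with $\tilde\psi\psi=\psi$. Because $\mathcal{L}_a$ has trivial kernel, $\sum_k\psi(2^{-k}\sqrt{\mathcal{L}_a})=I$ strongly on $L^2$. So for $f\in L^2\cap L^p$ and $g\in L^2\cap L^{p'}$,
$$\langle f,g\rangle=\sum_k\langle\psi(2^{-k}\sqrt{\mathcal{L}_a})f,\tilde\psi(2^{-k}\sqrt{\mathcal{L}_a})g\rangle .$$
By Cauchy--Schwarz in $k$ and Hölder in space, this is at most
$$\Big\|\Big(\sum_k|\psi_kf|^2\Big)^{1/2}\Big\|_{L^p}\Big\|\Big(\sum_k|\tilde\psi_kg|^2\Big)^{1/2}\Big\|_{L^{p'}}.$$
The second factor is bounded by $C\|g\|_{L^{p'}}$ by the upper bound already proved, applied with $\tilde\psi$ (the bounded-overlap argument still works, now with a fixed finite number of overlapping terms). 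Taking the supremum over $g$ and using density gives the left-hand inequality.

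The main obstacle is the multiplier theorem itself, i.e.\ passing from Gaussian heat bounds to $L^p$ boundedness of $F_\varepsilon(\sqrt{\mathcal{L}_a})$ uniformly in $\varepsilon$. I would cite the Duong--Ouhabaz--Sikora theorem, whose hypotheses are met here exactly because of the Gaussian bound. The only things to check carefully are the uniform Hörmander norm of $F_\varepsilon$ and the density and convergence issues for $\sum_k\psi_k=I$ on $L^2\cap L^p$.
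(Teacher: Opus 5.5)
Your proof is correct. The paper gives no argument for this proposition and simply quotes it from Zhang--Zhang. Your derivation is the standard argument behind that citation: the Gaussian heat-kernel bound, then a H\"ormander-type spectral multiplier theorem applied uniformly in the signs to $\sum_k\varepsilon_k\psi(2^{-k}\cdot)$, then Khintchine for the upper bound and duality with $\tilde\psi$ for the lower bound.

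One small adjustment is inherited from the paper's normalization. The condition $\supp\psi\subset[1/2,1]$ is incompatible with $\sum_k\psi(2^{-k}\lambda)=1$, since the sum would vanish at every $\lambda=2^j$. You should read it as $\supp\psi\subset[1/2,2]$ and take $\tilde\psi\in C_c^\infty((1/4,4))$; your bounded-overlap and duality steps are unchanged.
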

 
\subsection{Wave equation}
First, we consider the wave equation as
\begin{equation}\label{Wave-equ}
	\begin{cases}
	\partial_t^2u+\mathcal{L}_a u=0,\;t\in\mathbb{R},\; (x,y)\in\mathbb{R}^{2+n},\\
	u(0,x,y)=f(x,y),\; \partial_tu(0,x,y)=g(x,y).
	\end{cases}
	\end{equation}
By Duhamel's principle, the solution is formally given by
\begin{equation*}
	u(t,x,y)=\frac{e^{it\sqrt{\mathcal{L}_a}}+e^{-it\sqrt{\mathcal{L}_a}}}{2}f(x,y)+\frac{e^{it\sqrt{\mathcal{L}_a}}-e^{-it\sqrt{\mathcal{L}_a}}}{2i\sqrt{\mathcal{L}_a}}g(x,y).
	\end{equation*}
This reduces to the wave semigroup $\{e^{it\sqrt{\mathcal{L}_a}}\}_{t\in\mathbb{R}}$, which corresponds to the case $\phi(\lambda)=\lambda$. Hence, $\phi$ satisfies (C1) and (C2) with $m_1=m_2=1$.  Using Theorem \ref{ResultTime}, we obtain the  following Strichartz estimate for the wave operator $e^{it\sqrt{\mathcal{L}_a}}$. 
\begin{theorem}\label{Intermediate-wave}  Let $n\geq1$, $s\geq0$ and $q,r\in [2,\infty]$ satisfy
\begin{equation*}
\frac{2}{q}+\frac{n+1}{r}\leq \frac{n+1}{2},\quad s=(n+2)\left(\frac{1}{2}-\frac{1}{r}\right)-\frac{1}{q}, \quad r<\infty
\end{equation*}
or $2<q<\infty$, $r=\infty$, $s=\frac{n+2}{2}-\frac{1}{q}$. Then we have
\begin{equation*}
\|e^{it\sqrt{\mathcal{L}_a}}f\|_{L_t^q(\mathbb{R},L^r_{x,y}(\mathbb{R}^{n+2}))}\lesssim \|f\|_{\dot{H}^s_a(\mathbb{R}^{n+2})}.
\end{equation*}
\end{theorem}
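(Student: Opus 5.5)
The boundary case $r=\infty$ is Theorem \ref{boundary} with $\nu=\frac12$: there $\frac{2\nu}{q}=\frac1q$ and the condition $\frac{2}{q}<\frac{n+1}{2}$ holds for every $2<q<\infty$ since $n\geq1$. So we only treat $r<\infty$. We take $\phi(\lambda)=\lambda$, which satisfies (C1) and (C2) with $m_1=m_2=1$. Part (A) and part (B) of Theorem \ref{ResultTime} with $\theta=\frac{n+1}{2}$ then give, for every $k\in\mathbb{Z}$,
\begin{equation*}
\|e^{it\sqrt{\mathcal{L}_a}}\psi(2^{-k}\sqrt{\mathcal{L}_a})f\|_{L^\infty}\lesssim |t|^{-\frac{n+1}{2}}2^{k\frac{n+3}{2}}\|f\|_{L^1}.
\end{equation*}
This estimate is the input for the rescaled Keel--Tao machinery of Proposition \ref{Keel-Tao argument}.

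\textbf{Step 1: the unit-frequency estimate.} Let $\tilde\psi$ be a slightly fattened cutoff with $\tilde\psi\psi=\psi$. Set $U_k(t)=e^{it\sqrt{\mathcal{L}_a}}\tilde\psi(2^{-k}\sqrt{\mathcal{L}_a})$, so that $U_k(t)U_k(s)^*=e^{i(t-s)\sqrt{\mathcal{L}_a}}\tilde\psi^2(2^{-k}\sqrt{\mathcal{L}_a})$. Part (A)/(B) with $\theta=0$ bounds this operator by $2^{k(n+2)}$. Interpolating with the $\theta=\frac{n+1}{2}$ bound gives
\begin{equation*}
\|U_k(t)U_k(s)^*\|_{L^1\to L^\infty}\lesssim 2^{k(n+2)}(1+2^k|t-s|)^{-\frac{n+1}{2}}.
\end{equation*}
The $L^2$ bound \eqref{energy estimate} follows from Plancherel \eqref{Plancherel}. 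Proposition \ref{Keel-Tao argument} applies with $h=2^{-k}$, $\sigma=\frac{n+1}{2}$, and $\alpha=n+2-\sigma$, after writing $2^{k(n+2)}(1+2^k|\tau|)^{-\sigma}=h^{-(n+2)+\sigma}(h+|\tau|)^{-\sigma}$. It gives
\begin{equation*}
\|U_k(t)f\|_{L^q_tL^r}\lesssim 2^{k s}\|f\|_{L^2},\qquad s=(n+2)\Bigl(\frac12-\frac1r\Bigr)-\frac1q,
\end{equation*}
for all admissible pairs with $\frac{2}{q}+\frac{n+1}{r}\le\frac{n+1}{2}$, $q\geq2$. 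The endpoint $(q,r,\sigma)=(2,\infty,1)$ is excluded, but it does not occur here since $r<\infty$. A scaling-free alternative is to apply the Keel--Tao estimate directly at $k=0$ and use a dilation argument; this is unavailable because $\mathcal{L}_a$ is only scale-invariant in a way compatible with the potential. Since $\frac a2|x|^{-2}$ is scaling critical, the dilation route actually works too, but the $h$-parameter form avoids checking it.

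\textbf{Step 2: summing the pieces.} Write $f=\sum_k\psi(2^{-k}\sqrt{\mathcal{L}_a})f$. For $2\le r<\infty$, apply the square function inequality of Proposition \ref{LP inequality} in $L^r_{x,y}$. Then use Minkowski's inequality, which is legitimate since $q,r\ge2$, to get
\begin{equation*}
\|e^{it\sqrt{\mathcal{L}_a}}f\|_{L^q_tL^r}\lesssim\Bigl(\sum_k\|e^{it\sqrt{\mathcal{L}_a}}\psi(2^{-k}\sqrt{\mathcal{L}_a})f\|_{L^q_tL^r}^2\Bigr)^{1/2}.
\end{equation*}
Applying Step 1 to $\psi(2^{-k}\sqrt{\mathcal{L}_a})f=\tilde\psi(2^{-k}\sqrt{\mathcal{L}_a})\psi(2^{-k}\sqrt{\mathcal{L}_a})f$ bounds each summand by $2^{ks}\|\psi(2^{-k}\sqrt{\mathcal{L}_a})f\|_{L^2}$. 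The resulting sum is $\|f\|_{\dot B^s_{2,2,a}}=\|f\|_{\dot H^s_a}$. This step needs $r<\infty$ for the square function inequality, which is why the case $r=\infty$ was handled separately. The case $r=2$ is trivial, with $q=\infty$ and $s=0$.

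\textbf{Main obstacle.} The delicate point is Step 1: matching the frequency-localized decay from Theorem \ref{ResultTime} to the exact form \eqref{1 to infty}, uniformly in $k$, for both $k\ge1$ (via (A)) and $k\le0$ (via (B)). One must also check that the fattened cutoff $\tilde\psi^2$ obeys the same bounds as $\psi$. It does, because the proof of Theorem \ref{ResultTime} uses only smoothness and compact support in $(0,\infty)$ of the cutoff. Once this is in place, the exponent bookkeeping $-(\alpha+\sigma)(\frac12-\frac1r)+\frac1q$ gives exactly $-s$ in powers of $h$, that is, the factor $2^{ks}$.
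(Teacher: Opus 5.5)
Your proposal is correct and follows the paper's proof essentially step by step. You feed the dyadic decay from Theorem \ref{ResultTime} into Proposition \ref{Keel-Tao argument} with $h=2^{-k}$, $\sigma=\frac{n+1}{2}$, $\alpha=\frac{n+3}{2}$, sum the pieces with the square function inequality and Minkowski, and invoke Theorem \ref{boundary} with $\nu=\frac12$ for $r=\infty$. Your fattened cutoff $\tilde\psi$ usefully tidies a point the paper glosses over, namely that $U_k(t)U_k(s)^*$ carries $\psi^2$ rather than $\psi$. Your aside about the dilation argument contradicts itself (first unavailable, then working) and should be deleted.
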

\begin{proof} Let $U_k(t)=e^{it\sqrt{\mathcal{L}_a}}\psi(2^{-k}\sqrt{\mathcal{L}_a})$, for any $k\in\mathbb{Z}$.
It follows from Theorem \ref{ResultTime} that
\begin{equation*}
\|U_k(t)f\|_{L^\infty_{x,y}(\mathbb{R}^{n+2})}\lesssim |t|^{-\theta}2^{k(n+2-\theta)}\|f\|_{L^1_{x,y}(\mathbb{R}^{n+2})},\quad \forall 0\leq \theta\leq \frac{n+1}{2},\; \forall k\in\mathbb{Z},
\end{equation*}
which implies that
\begin{equation*}
\|U_k(t)f\|_{L^\infty_{x,y}(\mathbb{R}^{n+2})}\lesssim 2^{\frac{n+3}{2}k}(2^{-k}+|t|)^{-\frac{n+1}{2}}\|f\|_{L^1_{x,y}(\mathbb{R}^{n+2})},\quad \forall k\in\mathbb{Z}.
\end{equation*}
Then we have
\begin{equation*}
\|U_k(t)U_k(s)^*\|_{L^1_{x,y}(\mathbb{R}^{n+2})\rightarrow L^\infty_{x,y}(\mathbb{R}^{n+2})}\lesssim 2^{\frac{n+3}{2}k}(2^{-k}+|t-s|)^{-\frac{n+1}{2}},\quad \forall k\in\mathbb{Z},
\end{equation*}
which satisfies the estimates \eqref{1 to infty} in Proposition \ref{Keel-Tao argument} for $\alpha=\frac{n+3}{2}$, $\sigma=\frac{n+1}{2}$ and $h=2^{-k}$. 
On the other hand, by Plancherel's formula {\color{red} \eqref{Plancherel}}, we get the energy estimate
\begin{equation*}
\|U_k(t)\|_{L^2_{x,y}(\mathbb{R}^{n+2})\rightarrow L^2_{x,y}(\mathbb{R}^{n+2})}\lesssim 1,\quad \forall k\in\mathbb{Z}.
\end{equation*}
Using Keel-Tao's argument in Proposition \ref{Keel-Tao argument}, it gives that
\begin{equation*}
\|U_k(t)f\|_{L_t^q(\mathbb{R},L^r_{x,y}(\mathbb{R}^{2+n}))}\lesssim 2^{k[(n+2)(\frac{1}{2}-\frac{1}{r})-\frac{1}{q}]}\|f\|_{L^2(\mathbb{R}^{2+n})}.
\end{equation*}
for $q\in[2,\infty]$, $r\in [2,\infty)$ such that $\frac{2}{q}+\frac{n+1}{r}\leq \frac{n+1}{2}$. Notice that
\begin{equation*}
e^{it\sqrt{\mathcal{L}_a}}f=\sum_{k\in\mathbb{Z}}U_k(t)f=\sum_{j\in\mathbb{Z}}\sum_{k\in\mathbb{Z}}U_k(t)\psi(2^{-j}\sqrt{\mathcal{L}_a})f,
\end{equation*}
and $U_k(t)\psi(2^{-j}\sqrt{\mathcal{L}_a})f$ vanishes if $|j-k|\geq3$. Hence, using the Littlewood-Paley square inequality in Proposition \ref{LP inequality} and the Minkowski inequality, we have
\begin{align*}
\|e^{it\sqrt{\mathcal{L}_a}}f\|_{L_t^q(\mathbb{R},L^r_{x,y}(\mathbb{R}^{n+2}))}&\lesssim \left\|\left(\sum_{j\in\mathbb{Z}}|\sum_{k\in\mathbb{Z}}U_k(t)\psi(2^{-j}\sqrt{\mathcal{L}_a})f|^2\right)^\frac{1}{2}\right\|_{L_t^q(\mathbb{R},L^r_{x,y}(\mathbb{R}^{n+2}))}\\
&\lesssim \left\|\left(\sum_{j\in\mathbb{Z}}\sum_{|k-j|\leq 2}|U_k(t)\psi(2^{-j}\sqrt{\mathcal{L}_a})f|^2\right)^\frac{1}{2}\right\|_{L_t^q(\mathbb{R},L^r_{x,y}(\mathbb{R}^{n+2}))}\\
&\lesssim \left(\sum_{j\in\mathbb{Z}}\sum_{|k-j|\leq 2}\left\|U_k(t)\psi(2^{-j}\sqrt{\mathcal{L}_a})f\right\|_{L_t^q(\mathbb{R},L^r_{x,y}(\mathbb{R}^{n+2}))}^2\right)^\frac{1}{2}\\
&\lesssim \left(\sum_{j\in\mathbb{Z}}\sum_{|k-j|\leq 2}2^{2k[(n+2)(\frac{1}{2}-\frac{1}{r})-\frac{1}{q}]}\|\psi(2^{-j}\sqrt{\mathcal{L}_a})f\|^2_{L^2(\mathbb{R}^{2+n})}\right)^\frac{1}{2}\\
&\lesssim \left(\sum_{j\in\mathbb{Z}}2^{2j[(n+2)(\frac{1}{2}-\frac{1}{r})-\frac{1}{q}]}\|\psi(2^{-j}\sqrt{\mathcal{L}_a})f\|^2_{L^2(\mathbb{R}^{2+n})}\right)^\frac{1}{2}\\
&=\|f\|_{\dot{H}_a^{(n+2)(\frac{1}{2}-\frac{1}{r})-\frac{1}{q}}}.
\end{align*}
If $2<q<\infty$ and $r=\infty$, it follows directly from Theorem \ref{boundary} when $\nu=\frac{1}{2}$ that
\begin{equation*}
\|e^{it\sqrt{\mathcal{L}_a}}f\|_{L_t^q(\mathbb{R},L^r_{x,y}(\mathbb{R}^{n+2}))}\lesssim \|f\|_{\dot{H}^{\frac{n+2}{2}-\frac{1}{q}}_a(\mathbb{R}^{n+2})}.
\end{equation*}
\end{proof}
It immediately yields the Strichartz inequality for the solution of the wave equation \eqref{Wave-equ}. 
\begin{corollary}
Under the same hypotheses as in Theorem \ref{Intermediate-wave}, the solution $u$ of  the wave equation \eqref{Wave-equ} satisfies the following estimate
	\begin{equation*}
	\|u\|_{L_t^q(\mathbb{R},L^r_{x,y}(\mathbb{R}^{2+n}))}\lesssim \|f\|_{\dot{H}^s_a(\mathbb{R}^{2+n})}+\|g\|_{\dot{H}^{s-1}_a(\mathbb{R}^{2+n})}.
	\end{equation*}
\end{corollary}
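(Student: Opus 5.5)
The corollary follows from Theorem \ref{Intermediate-wave} applied to the explicit solution formula. Write
\[
u(t)=\tfrac12\bigl(e^{it\sqrt{\mathcal{L}_a}}+e^{-it\sqrt{\mathcal{L}_a}}\bigr)f+\tfrac{1}{2i}\bigl(e^{it\sqrt{\mathcal{L}_a}}-e^{-it\sqrt{\mathcal{L}_a}}\bigr)\mathcal{L}_a^{-1/2}g .
\]
By the triangle inequality in $L^q_tL^r_{x,y}$, it suffices to bound each of the four terms $e^{\pm it\sqrt{\mathcal{L}_a}}f$ and $e^{\pm it\sqrt{\mathcal{L}_a}}\mathcal{L}_a^{-1/2}g$.

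For the terms with $+$ we apply Theorem \ref{Intermediate-wave} directly. For $e^{-it\sqrt{\mathcal{L}_a}}$ we use the change of variables $t\mapsto -t$, which leaves the $L^q_t(\mathbb{R})$ norm invariant and turns $e^{-it\sqrt{\mathcal{L}_a}}$ into $e^{it\sqrt{\mathcal{L}_a}}$. This gives
\[
\|e^{\pm it\sqrt{\mathcal{L}_a}}f\|_{L^q_tL^r_{x,y}}\lesssim\|f\|_{\dot H^s_a}.
\]
Alternatively, the same bound follows by rerunning the proof with $\phi(\lambda)=-\lambda$, since Theorem \ref{ResultTime} only uses $|\phi'|$ and $|\phi''|$, and Theorem \ref{boundary} treats $e^{-it\mathcal{L}_a^{1/2}}$ identically.

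For the $g$-terms, apply the same estimate to $h=\mathcal{L}_a^{-1/2}g$:
\[
\|e^{\pm it\sqrt{\mathcal{L}_a}}\mathcal{L}_a^{-1/2}g\|_{L^q_tL^r_{x,y}}\lesssim\|\mathcal{L}_a^{-1/2}g\|_{\dot H^s_a}=\|\mathcal{L}_a^{\frac{s-1}{2}}g\|_{L^2}=\|g\|_{\dot H^{s-1}_a}.
\]
The identity $\|\mathcal{L}_a^{s/2}\mathcal{L}_a^{-1/2}g\|_{L^2}=\|\mathcal{L}_a^{(s-1)/2}g\|_{L^2}$ follows from the functional calculus via the distorted Fourier transform \eqref{Distorted}, together with the Plancherel formula \eqref{Plancherel}. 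Summing the four bounds gives the claim.

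The only point needing care is justifying the functional calculus when $s-1<0$. Here $\mathcal{L}_a^{-1/2}$ is defined spectrally through the multiplier $(\rho^2+|\eta|^2)^{-1/2}$. The argument is first run for $g$ in the dense class whose distorted Fourier transform is compactly supported away from $\rho^2+|\eta|^2=0$, and then extended by density. Everything else is routine.
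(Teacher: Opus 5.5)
Your argument is correct and is exactly what the paper means when it says the corollary follows immediately from Theorem~\ref{Intermediate-wave}: you split the Duhamel formula into the four half-wave pieces, reduce $e^{-it\sqrt{\mathcal{L}_a}}$ to $e^{it\sqrt{\mathcal{L}_a}}$ by $t\mapsto -t$, and apply the theorem to $f$ and to $\mathcal{L}_a^{-1/2}g$ using $\|\mathcal{L}_a^{-1/2}g\|_{\dot{H}^s_a}=\|g\|_{\dot{H}^{s-1}_a}$. Your density remark for the case $s-1<0$ is extra care that the paper leaves implicit.
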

\begin{remark} In addition to proving Theorem \ref{wave-Strichartz} established by Zhang-Zhang \cite{Zhang-Zhang}, we further derive boundary Strichartz estimates for the wave equation \eqref{Wave-equ}.
\end{remark} 
\subsection{Fractional Schr\"{o}dinger equation} Consider the fractional Schr\"{o}dinger equation ($0<\nu\neq \frac{1}{2}$)
    \begin{equation}\label{FSchrEqu}
	\begin{cases}
	i\partial_tu+\mathcal{L}_a^\nu u=0,\;t\in\mathbb{R},\; (x,y)\in\mathbb{R}^{2+n},\\
	u(0,x,y)=f(x,y).
	\end{cases}
	\end{equation}
By Duhamel's principle, the solution is formally given by
	\begin{equation*}\label{solution1}
	u(t,x,y)=e^{it\mathcal{L}_a^\nu}f(x,y),
	\end{equation*}
and it corresponds to the case when $\phi(\lambda)=\lambda^{2\nu}$. By a simple calculation, we see that 
	\begin{equation*}
	\phi'(\lambda)=2\nu \lambda^{2\nu-1}, \quad
	\phi''(\lambda)=2\nu(2\nu-1)\lambda^{2\nu-2}.
	\end{equation*}
This shows that $\phi$ satisfies (C1)-(C4) with $m_1=\alpha_1=m_2=\alpha_2=2\nu$. Using Theorem \ref{ResultTime}, we obtain the following Strichartz estimate for the fractional Schr\"odinger operator $e^{it\mathcal{L}_a^\nu}$.
\begin{theorem}\label{Intermediate-fractional} For any $0<\nu\neq\frac{1}{2}$, let $s\in\mathbb{R}$ and $q,r\in [2,\infty]$ satisfy
\begin{equation*}
\frac{2}{q}+\frac{n+2}{r}\leq \frac{n+2}{2},\quad s=(n+2)\left(\frac{1}{2}-\frac{1}{r}\right)-\frac{2\nu}{q}, \quad r<\infty
\end{equation*}
or $2<q<\infty$, $r=\infty$, $s=\frac{n+2}{2}-\frac{2\nu}{q}$. Then we have
\begin{equation*}
\|e^{it\mathcal{L}_a^\nu}f\|_{L_t^q(\mathbb{R},L^r_{x,y}(\mathbb{R}^{n+2}))}\lesssim \|f\|_{\dot{H}^s_a(\mathbb{R}^{n+2})}.
\end{equation*}
\end{theorem}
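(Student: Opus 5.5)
The plan mirrors the proof of Theorem \ref{Intermediate-wave}: a frequency-localized Keel--Tao estimate, followed by Littlewood--Paley summation. The endpoint $r=\infty$ is handled separately by Theorem \ref{boundary}.

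\emph{Localized decay.} Fix $k\in\mathbb{Z}$ and set $U_k(t)=e^{it\mathcal{L}_a^\nu}\psi(2^{-k}\sqrt{\mathcal{L}_a})$, which corresponds to $\phi(\lambda)=\lambda^{2\nu}$. Here (C1)--(C4) hold with $m_1=\alpha_1=m_2=\alpha_2=2\nu$, so the $\theta$-dependent factor $\alpha_i-m_i$ in the second estimates of parts \textbf{(A)} and \textbf{(B)} of Theorem \ref{ResultTime} vanishes. Taking $\theta=1$ there, and $\theta=0$ in the first estimates, gives for every $k\in\mathbb{Z}$
\begin{equation*}
\|U_k(t)f\|_{L^\infty}\lesssim \min\Bigl(2^{k(n+2)},\;|t|^{-\frac{n+2}{2}}2^{k(n+2)-k\nu(n+2)}\Bigr)\|f\|_{L^1}.
\end{equation*}
With $h=2^{-2k\nu}$ this reads $\lesssim h^{-\frac{n+2}{2\nu}+\frac{n+2}{2}}\bigl(h+|t|\bigr)^{-\frac{n+2}{2}}\|f\|_{L^1}$. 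Since $U_k(t)U_k(s)^*=e^{i(t-s)\mathcal{L}_a^\nu}\psi^2(2^{-k}\sqrt{\mathcal{L}_a})$ and $\psi^2$ has the same support properties as $\psi$, the same bound holds with $t$ replaced by $t-s$; this requires only a remark that the proof of Theorem \ref{ResultTime} is insensitive to the choice of cutoff.

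\emph{Keel--Tao and summation.} The energy estimate \eqref{energy estimate} follows from the Plancherel formula \eqref{Plancherel}. Applying Proposition \ref{Keel-Tao argument} with $\sigma=\frac{n+2}{2}$ and $\alpha=\frac{n+2}{2\nu}-\frac{n+2}{2}$ gives
\begin{equation*}
\|U_k(t)f\|_{L^q_tL^r}\lesssim h^{-\frac{n+2}{2\nu}(\frac12-\frac1r)+\frac1q}\|f\|_{L^2}=2^{k[(n+2)(\frac12-\frac1r)-\frac{2\nu}{q}]}\|f\|_{L^2}.
\end{equation*}
This holds for $\frac{2}{q}+\frac{n+2}{r}\le\frac{n+2}{2}$ and $q\ge2$, excluding $(q,r,\sigma)=(2,\infty,1)$; that case is irrelevant since $r<\infty$ here. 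Next write $e^{it\mathcal{L}_a^\nu}f=\sum_j\sum_{|k-j|\le2}U_k(t)\psi(2^{-j}\sqrt{\mathcal{L}_a})f$. Then apply the square function inequality of Proposition \ref{LP inequality} in $L^r$, $2\le r<\infty$, and Minkowski's inequality, which is allowed because $q,r\ge2$. Summing the dyadic pieces in $\ell^2$ against $2^{2js}\|\psi(2^{-j}\sqrt{\mathcal{L}_a})f\|_{L^2}^2$ yields the $\dot H_a^s$ norm, exactly as in the wave case.

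\emph{Endpoint $r=\infty$.} For $2<q<\infty$, the admissibility condition with $r=\infty$ forces $\frac{2}{q}\le\frac{n+2}{2}$. Theorem \ref{boundary} in the case $\nu\ne\frac12$ then directly gives the estimate with $s=\frac{n+2}{2}-\frac{2\nu}{q}$.

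\emph{Main obstacle.} The step needing the most care is the bookkeeping of the exponents $\alpha,h$ so that $\Lambda(h)$ matches $2^{ks}$. A second point is justifying the $U_k(t)U_k(s)^*$ decay for the squared cutoff. Neither should be a genuine difficulty.
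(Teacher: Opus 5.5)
Your proposal is correct and follows the paper's proof essentially step for step. You rewrite the localized decay from Theorem \ref{ResultTime} as $h^{-\alpha}(h+|t|)^{-\frac{n+2}{2}}$ with $h=2^{-2k\nu}$, $\alpha=\frac{(n+2)(1-\nu)}{2\nu}$, $\sigma=\frac{n+2}{2}$, then apply Proposition \ref{Keel-Tao argument}, then sum with the Littlewood--Paley square function and Minkowski's inequality, and invoke Theorem \ref{boundary} for $r=\infty$. Your observation that $U_k(t)U_k(s)^*$ carries the cutoff $\psi^2$ is a detail the paper passes over silently; it is harmless, since the proof of Theorem \ref{ResultTime} works for any such cutoff.
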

\begin{proof} Denote $U_k(t)=e^{it\mathcal{L}_a^\nu}\psi(2^{-k}\sqrt{\mathcal{L}_a})$, for any $k\in\mathbb{Z}$.
It follows from Theorem \ref{ResultTime} that
\begin{equation*}
\|U_k(t)f\|_{L^\infty_{x,y}(\mathbb{R}^{n+2})}\lesssim |t|^{-\theta}2^{k(n+2-2\nu\theta)}\|f\|_{L^1_{x,y}(\mathbb{R}^{n+2})},\quad \forall 0\leq \theta\leq \frac{n+2}{2},\; \forall k\in\mathbb{Z},
\end{equation*}
which implies that
\begin{equation*}
\|U_k(t)f\|_{L^\infty_{x,y}(\mathbb{R}^{n+2})}\lesssim 2^{(n+2)(1-\nu)k}(2^{-2k\nu}+|t|)^{-\frac{n+2}{2}}\|f\|_{L^1_{x,y}(\mathbb{R}^{n+2})},\quad \forall k\in\mathbb{Z}.
\end{equation*}
Then we have
\begin{equation*}
\|U_k(t)U_k(s)^*\|_{L^1_{x,y}(\mathbb{R}^{n+2})\rightarrow L^\infty_{x,y}(\mathbb{R}^{n+2})}\lesssim 2^{(n+2)(1-\nu)k}(2^{-2k\nu}+|t-s|)^{-\frac{n+2}{2}},\quad \forall k\in\mathbb{Z},
\end{equation*}
which satisfies the estimates \eqref{1 to infty} in Proposition \ref{Keel-Tao argument} for $\alpha=\frac{(n+2)(1-\nu)}{2\nu}$, $\sigma=\frac{n+2}{2}$ and $h=2^{-2k\nu}$. 
On the other hand, by Plancherel's formula \eqref{Plancherel}, we get the energy estimate
\begin{equation*}
\|U_k(t)\|_{L^2_{x,y}(\mathbb{R}^{n+2})\rightarrow L^2_{x,y}(\mathbb{R}^{n+2})}\lesssim 1,\quad \forall k\in\mathbb{Z}.
\end{equation*}
Using Keel-Tao's argument in Proposition \ref{Keel-Tao argument}, it gives that
\begin{equation*}
\|U_k(t)f\|_{L_t^q(\mathbb{R},L^r_{x,y}(\mathbb{R}^{2+n}))}\lesssim 2^{k[(n+2)(\frac{1}{2}-\frac{1}{r})-\frac{2\nu}{q}]}\|f\|_{L^2(\mathbb{R}^{2+n})}.
\end{equation*}
for $q\in[2,\infty]$, $r\in [2,\infty)$ such that $\frac{2}{q}+\frac{n+2}{r}\leq \frac{n+2}{2}$. Notice that
\begin{equation*}
e^{it{\mathcal{L}_a^{\nu}}}f=\sum_{k\in\mathbb{Z}}U_k(t)f=\sum_{j\in\mathbb{Z}}\sum_{k\in\mathbb{Z}}U_k(t)\psi(2^{-j}\sqrt{\mathcal{L}_a})f,
\end{equation*}
and $U_k(t)\psi(2^{-j}\sqrt{\mathcal{L}_a})f$ vanishes if $|j-k|\geq3$. Hence, using the Littlewood-Paley square inequality in Proposition \ref{LP inequality} and the Minkowski inequality, we have
\begin{align*}
\|e^{it\mathcal{L}_a^\nu}f\|_{L_t^q(\mathbb{R},L^r_{x,y}(\mathbb{R}^{n+2}))}&\lesssim \left\|\left(\sum_{j\in\mathbb{Z}}|\sum_{k\in\mathbb{Z}}U_k(t)\psi(2^{-j}\sqrt{\mathcal{L}_a})f|^2\right)^\frac{1}{2}\right\|_{L_t^q(\mathbb{R},L^r_{x,y}(\mathbb{R}^{n+2}))}\\
&\lesssim \left\|\left(\sum_{j\in\mathbb{Z}}\sum_{|k-j|\leq 2}|U_k(t)\psi(2^{-j}\sqrt{\mathcal{L}_a})f|^2\right)^\frac{1}{2}\right\|_{L_t^q(\mathbb{R},L^r_{x,y}(\mathbb{R}^{n+2}))}\\
&\lesssim \left(\sum_{j\in\mathbb{Z}}\sum_{|k-j|\leq 2}\left\|U_k(t)\psi(2^{-j}\sqrt{\mathcal{L}_a})f\right\|_{L_t^q(\mathbb{R},L^r_{x,y}(\mathbb{R}^{n+2}))}^2\right)^\frac{1}{2}\\
&\lesssim \left(\sum_{j\in\mathbb{Z}}\sum_{|k-j|\leq 2}2^{2k[(n+2)(\frac{1}{2}-\frac{1}{r})-\frac{2\nu}{q}]}\|\psi(2^{-j}\sqrt{\mathcal{L}_a})f\|^2_{L^2(\mathbb{R}^{2+n})}\right)^\frac{1}{2}\\
&\lesssim \left(\sum_{j\in\mathbb{Z}}2^{2j[(n+2)(\frac{1}{2}-\frac{1}{r})-\frac{2\nu}{q}]}\|\psi(2^{-j}\sqrt{\mathcal{L}_a})f\|^2_{L^2(\mathbb{R}^{2+n})}\right)^\frac{1}{2}\\
&=\|f\|_{\dot{H}_a^{(n+2)(\frac{1}{2}-\frac{1}{r})-\frac{2\nu}{q}}}.
\end{align*}
If $2<q<\infty$ and $r=\infty$, it follows directly from Theorem \ref{boundary} that
\begin{equation*}
\|e^{it\mathcal{L}_a^\nu}f\|_{L_t^q(\mathbb{R},L^r_{x,y}(\mathbb{R}^{n+2}))}\lesssim \|f\|_{\dot{H}^{\frac{n+2}{2}-\frac{2\nu}{q}}_a(\mathbb{R}^{n+2})}.
\end{equation*}
\end{proof}
In particular, when $\nu=1$, the fractional Schr\"{o}dinger equation \eqref{FSchrEqu} reduces to the Schr\"odinger equation 
\begin{equation}\label{SchrEqu}
	\begin{cases}
	i\partial_tu+\mathcal{L}_au=0,\;t\in\mathbb{R},\; (x,y)\in\mathbb{R}^{2+n},\\
	u(0,x,y)=f(x,y).
	\end{cases}
	\end{equation}
As a corollary, we obtain Strichartz estimates for the solution to the Schr\"odinger equation \eqref{SchrEqu}.
\begin{corollary} Let $s\in\mathbb{R}$ and $q,r\in [2,\infty]$ satisfy
\begin{equation*}
\frac{2}{q}+\frac{n+2}{r}\leq \frac{n+2}{2},\quad s=(n+2)\left(\frac{1}{2}-\frac{1}{r}\right)-\frac{2}{q}, \quad r<\infty
\end{equation*}
or $2<q<\infty$, $r=\infty$, $s=\frac{n+2}{2}-\frac{2}{q}$. Then the solution of the Schr\"odinger equation \eqref{SchrEqu} satisfies
\begin{equation*}
\|u\|_{L_t^q(\mathbb{R},L^r_{x,y}(\mathbb{R}^{n+2}))}=\|e^{it\mathcal{L}_a}f\|_{L_t^q(\mathbb{R},L^r_{x,y}(\mathbb{R}^{n+2}))}\lesssim \|f\|_{\dot{H}^s_a(\mathbb{R}^{n+2})}.
\end{equation*}
\end{corollary}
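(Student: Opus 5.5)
The corollary is the special case $\nu=1$ of Theorem \ref{Intermediate-fractional}, so the plan is simply to specialize that theorem and check the hypotheses. First, $\nu=1$ is admissible there, since $0<1\neq\frac12$. For $\phi(\lambda)=\lambda^{2}$ we have $\phi'(\lambda)=2\lambda$ and $\phi''(\lambda)=2$. Hence (C1)--(C4) hold with $m_1=\alpha_1=m_2=\alpha_2=2$, which is the case $2\nu=2$ of the computation preceding Theorem \ref{Intermediate-fractional}. Moreover $e^{it\mathcal{L}_a^\nu}=e^{it\mathcal{L}_a}$ when $\nu=1$, and by Duhamel's principle this is exactly the solution operator of \eqref{SchrEqu}. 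This gives the identity $\|u\|_{L^q_tL^r_{x,y}}=\|e^{it\mathcal{L}_a}f\|_{L^q_tL^r_{x,y}}$.

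Second, I would substitute $\nu=1$ into the exponents of Theorem \ref{Intermediate-fractional}.

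\begin{itemize}
\item In the non-endpoint range, the admissibility condition $\frac{2}{q}+\frac{n+2}{r}\leq\frac{n+2}{2}$ with $r<\infty$ is unchanged. The regularity index $s=(n+2)(\frac12-\frac1r)-\frac{2\nu}{q}$ becomes $(n+2)(\frac12-\frac1r)-\frac2q$.
\item In the boundary range $2<q<\infty$, $r=\infty$, the index becomes $s=\frac{n+2}{2}-\frac{2}{q}$.
\end{itemize}

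For completeness I would also note where the boundary case comes from. It rests on Theorem \ref{boundary} with $\nu=1\neq\frac12$, whose hypothesis is $\frac{2}{q}\leq\frac{n+2}{2}$. Since $q>2$ gives $\frac2q<1$, and $n\geq1$ gives $\frac{n+2}{2}\geq\frac32$, this hypothesis holds automatically.

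There is no genuine obstacle here; the only point to check is that the endpoint hypotheses of Theorem \ref{boundary} are met, which was done above. If one wanted a self-contained argument, the steps of Theorem \ref{Intermediate-fractional} carry over verbatim with $h=2^{-2k}$, $\sigma=\frac{n+2}{2}$ and $\alpha=0$. One applies Proposition \ref{Keel-Tao argument} to the dyadic pieces and then sums them using Proposition \ref{LP inequality}.
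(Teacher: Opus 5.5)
Your proposal is correct and follows the paper's route: the corollary is exactly Theorem \ref{Intermediate-fractional} with $\nu=1$ (so $\phi(\lambda)=\lambda^2$, $h=2^{-2k}$, $\alpha=0$, $\sigma=\frac{n+2}{2}$), and the boundary case $r=\infty$ comes from Theorem \ref{boundary} with $\nu\neq\frac12$. Your check that $q>2$ and $n\geq1$ give $\frac{2}{q}<1<\frac{n+2}{2}$, so that the hypothesis of Theorem \ref{boundary} holds automatically, is valid and slightly more explicit than the paper.
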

\begin{remark} When $s=0$ and $r<\infty$, this reduces to Theorem \ref{Schrodinger-Strichartz} proved by Zhang-Zhang \cite{Zhang-Zhang}.
\end{remark}
\subsection{Fourth-order Schr\"{o}dinger equation}
Consider the fourth-order Schr\"{o}dinger equation
	\begin{equation}\label{SSchrEqu}
	\begin{cases}
	i\partial_tu+\mathcal{L}_a^2u+\mathcal{L}_a u=0,\;t\in\mathbb{R},\; (x,y)\in\mathbb{R}^{2+n},\\
	u(0,x,y)=f(x,y).
	\end{cases}
	\end{equation}
By Duhamel's principle, the solution is formally given by
	\begin{equation*}\label{solution3}
	u(t,x,y)=e^{it(\mathcal{L}_a^2+\mathcal{L}_a)}f(x,y),
	\end{equation*}
which corresponds to the case when $\phi(\lambda)=\lambda^4+\lambda^2$. By a simple calculation, we see that 
	\begin{equation*}
	\phi'(r)=4\lambda^3+2\lambda, \quad
	\phi''(r)=12\lambda^2+2.
	\end{equation*}
Thus, $\phi$ satisfies (C1)-(C4) with $m_1=\alpha_1=4$, $m_2=\alpha_2=2$.  Using  Theorem \ref{ResultTime}, we obtain the following Strichartz estimate for  $e^{it(\mathcal{L}_a^2+\mathcal{L}_a)}$. 
\begin{theorem}
Let $s\in\mathbb{R}$, $q\in [2,\infty]$ and  $r\in [2,\infty)$ satisfy
\begin{equation*}
\frac{2}{q}+\frac{n+2}{r}\leq \frac{n+2}{2},\quad s=(n+2)\left(\frac{1}{2}-\frac{1}{r}\right)-\frac{4}{q}.
\end{equation*}
Then the solution of the Schr\"odinger equation \eqref{SSchrEqu} satisfies
\begin{equation*}
\|u\|_{L_t^q(\mathbb{R},L^r_{x,y}(\mathbb{R}^{n+2}))}=\|e^{it(\mathcal{L}_a^2+\mathcal{L}_a)}f\|_{L_t^q(\mathbb{R},L^r_{x,y}(\mathbb{R}^{n+2}))}\lesssim \|f\|_{H^s_a(\mathbb{R}^{n+2})}.
\end{equation*}
\end{theorem}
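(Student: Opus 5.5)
The plan is to follow the scheme used for Theorems \ref{Intermediate-wave} and \ref{Intermediate-fractional}: localize dyadically, apply Proposition \ref{Keel-Tao argument} on each piece, and reassemble with Proposition \ref{LP inequality}. The new feature is that $\phi(\lambda)=\lambda^4+\lambda^2$ is inhomogeneous. So we treat high frequencies ($k\ge1$, where $m_1=\alpha_1=4$) and low frequencies ($k\le0$, where $m_2=\alpha_2=2$) separately. This is also why the inhomogeneous space $H^s_a$ appears.

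\textbf{Dyadic dispersive bounds.} Set $U_k(t)=e^{it(\mathcal{L}_a^2+\mathcal{L}_a)}\psi(2^{-k}\sqrt{\mathcal{L}_a})$.

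For $k\ge1$, part \textbf{(A)} of Theorem \ref{ResultTime} with $\theta=1$ in the (C3) estimate gives
\[
\|U_k(t)\|_{L^1\to L^\infty}\lesssim |t|^{-\frac{n+2}{2}}2^{k(n+2-2(n+2))}.
\]
The first estimate of \textbf{(A)} with $\theta=0$ gives $\|U_k(t)\|_{L^1\to L^\infty}\lesssim 2^{k(n+2)}$. Together these yield
\[
\|U_k(t)\|_{L^1\to L^\infty}\lesssim 2^{k(n+2)(1-2)}\bigl(2^{-4k}+|t|\bigr)^{-\frac{n+2}{2}}.
\]
This is \eqref{1 to infty} with $h=2^{-4k}$, $\sigma=\frac{n+2}{2}$ and $\alpha=-\frac{n+2}{4}$. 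Indeed, $h^{-\alpha}=2^{-k(n+2)}$ matches the prefactor $2^{k(n+2)(1-2)}$.

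For $k\le0$, part \textbf{(B)} gives the analogous bound with $h=2^{-2k}$, $\sigma=\frac{n+2}{2}$ and $\alpha=0$:
\[
\|U_k(t)\|_{L^1\to L^\infty}\lesssim \bigl(2^{-2k}+|t|\bigr)^{-\frac{n+2}{2}}.
\]

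Since $U_k(t)U_k(s)^*=U_k(t-s)\psi^2(2^{-k}\sqrt{\mathcal{L}_a})$, the same kernel analysis, with $\psi$ replaced by $\psi^2$, gives these bounds for $U_kU_k^*$. Energy bounds follow from \eqref{Plancherel}.

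\textbf{Dyadic Strichartz estimates.} Proposition \ref{Keel-Tao argument} then gives, for the stated $(q,r)$:
\begin{itemize}
\item for $k\ge1$, $\Lambda(h)=2^{k[(n+2)(\frac12-\frac1r)-\frac4q]}$, since $\alpha+\sigma=\frac{n+2}{4}$ and $h=2^{-4k}$;
\item for $k\le0$, $\Lambda(h)=2^{k[(n+2)(\frac12-\frac1r)-\frac2q]}$.
\end{itemize}
The endpoint exclusion $(q,r,\sigma)\ne(2,\infty,1)$ is harmless because $r<\infty$.

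\textbf{Summation.} As in the proof of Theorem \ref{Intermediate-wave}, we use Proposition \ref{LP inequality}, Minkowski's inequality (valid since $q,r\ge2$), and the almost-orthogonality $U_k\psi(2^{-j}\sqrt{\mathcal{L}_a})=0$ for $|j-k|\ge3$. This gives
\[
\|e^{it(\mathcal{L}_a^2+\mathcal{L}_a)}f\|_{L^q_tL^r}\lesssim\Bigl(\sum_{j\ge1}2^{2js}\|\psi_jf\|_2^2+\sum_{j\le0}2^{2j s'}\|\psi_jf\|_2^2\Bigr)^{1/2},
\]
where $\psi_j=\psi(2^{-j}\sqrt{\mathcal{L}_a})$, $s=(n+2)(\frac12-\frac1r)-\frac4q$ and $s'=(n+2)(\frac12-\frac1r)-\frac2q$.

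The admissibility condition $\frac2q+\frac{n+2}{r}\le\frac{n+2}2$ gives $s'\ge0$, so $2^{2js'}\le1$ for $j\le0$. The low-frequency part is therefore bounded by $\|f\|_{L^2}$. The high-frequency part is bounded by $\sum_{j\ge1}2^{2js}\|\psi_jf\|_2^2$. Together these are controlled by $\|f\|_{B^s_{2,2,a}}^2=\|f\|_{H^s_a}^2$. If $s<0$ one still needs $L^2$ control at low frequency; this is supplied by the $\varphi$ term of the inhomogeneous norm.

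\textbf{Main obstacle.} The key step is checking that the two regimes of Theorem \ref{ResultTime} really combine into the single form $h^{-\alpha}(h+|t|)^{-\sigma}$ with the correct $h$ on each side. At high frequency the scale is $h=2^{-4k}$ with $\alpha=-\frac{n+2}{4}$, which is negative but allowed in Proposition \ref{Keel-Tao argument}. It is also essential that the low-frequency exponent $s'$ is nonnegative. This is precisely what lets the homogeneous low-frequency loss be absorbed into $H^s_a$ rather than requiring $\dot H^s_a$.
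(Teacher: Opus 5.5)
Your proof is correct. For $k\ge1$ it coincides with the paper's argument: the same choice $h=2^{-4k}$, $\alpha=-\frac{n+2}{4}$, $\sigma=\frac{n+2}{2}$, followed by the square-function summation.

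The two proofs differ at low frequencies. The paper does not decompose $k\le0$ dyadically. It treats $U^{low}(t)=e^{it(\mathcal L_a^2+\mathcal L_a)}\varphi(\sqrt{\mathcal L_a})$ as a single block and takes the uniform decay $(1+|t|)^{-\frac{n+2}{2}}$ from part \textbf{(C)} of Theorem~\ref{ResultTime}, with $m_2=\alpha_2=2$. It then applies the standard Keel--Tao theorem to get $\|U^{low}f\|_{L^q_tL^r}\lesssim\|f\|_{L^2}$. The overlaps of $U^{low}$, $U_1$, $U_2$ with the pieces $\psi(2^{-j}\sqrt{\mathcal L_a})$ are handled by hand, and the square function is used only for $k\ge3$.

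You instead apply part \textbf{(B)} and Proposition~\ref{Keel-Tao argument} to every $k\le0$, with $h=2^{-2k}\ge1$ and $\alpha=0$. You then use $s'=(n+2)(\frac12-\frac1r)-\frac2q\ge0$ to sum the low-frequency pieces. This route has three advantages:
\begin{itemize}
\item It bypasses part \textbf{(C)}, whose proof needs the more delicate summation around the stationary scale $k_0$, and whose (C4) refinement the paper only sketches.
\item It treats all frequencies uniformly with a single square-function argument.
\item It gives a slightly stronger low-frequency bound, with weight $2^{js'}\le1$ in place of $1$.
\end{itemize}

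The paper's single-block route is more robust when $m_2>2$. For the beam equation ($m_2=4$), your dyadic low-frequency factor would be $2^{k[(n+2)(\frac12-\frac1r)-\frac4q]}$. For sharp pairs this diverges as $k\to-\infty$. Part \textbf{(C)}, by contrast, still gives a uniform decay bound for the whole low-frequency block, to which Keel--Tao applies for the correspondingly admissible pairs. Your argument works here precisely because $m_2=\alpha_2=2$ makes $s'\ge0$ under the stated admissibility condition.
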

\begin{proof} Denote $U^{low}(t)=e^{it(\mathcal{L}_a^2+\mathcal{L}_a)}\varphi(\sqrt{\mathcal{L}_a})$ and $U_k(t)=e^{it(\mathcal{L}_a^2+\mathcal{L}_a)}\psi(2^{-k}\sqrt{\mathcal{L}_a})$, for any $k\in\mathbb{Z}$.
It follows from Theorem \ref{ResultTime} that for any $0\leq \theta\leq \frac{n+2}{2}$
\begin{align*}
\|U_k(t)f\|_{L^\infty_{x,y}(\mathbb{R}^{n+2})}&\lesssim |t|^{-\theta}2^{k(n+2-4\theta)}\|f\|_{L^1_{x,y}(\mathbb{R}^{n+2})},\quad  \forall k\geq1,\\
\|U^{low}(t)f\|_{L^\infty_{x,y}(\mathbb{R}^{n+2})}&\lesssim (1+|t|)^{-\theta}\|f\|_{L^1_{x,y}(\mathbb{R}^{n+2})},
\end{align*}
which implies that
\begin{align*}
\|U_k(t)f\|_{L^\infty_{x,y}(\mathbb{R}^{n+2})}&\lesssim 2^{-(n+2)k}(2^{-4k}+|t|)^{-\frac{n+2}{2}}\|f\|_{L^1_{x,y}(\mathbb{R}^{n+2})},\quad \forall k\geq1,\\
\|U^{low}(t)f\|_{L^\infty_{x,y}(\mathbb{R}^{n+2})}&\lesssim (1+|t|)^{-\frac{n+2}{2}}\|f\|_{L^1_{x,y}(\mathbb{R}^{n+2})},
\end{align*}
Then we have
\begin{align*}
\|U_k(t)U_k(s)^*\|_{L^1_{x,y}(\mathbb{R}^{n+2})\rightarrow L^\infty_{x,y}(\mathbb{R}^{n+2})}&\lesssim 2^{-(n+2)k}(2^{-4k}+|t-s|)^{-\frac{n+2}{2}},\quad \forall k\geq1,\\
\|U^{low}(t)U^{low}(s)^*\|_{L^1_{x,y}(\mathbb{R}^{n+2})\rightarrow L^\infty_{x,y}(\mathbb{R}^{n+2})}&\lesssim (1+|t-s|)^{-\frac{n+2}{2}},
\end{align*}
where $U_k(t),k\geq1$ satisfies the estimates \eqref{1 to infty} in Proposition \ref{Keel-Tao argument} for $\alpha=-\frac{n+2}{4}$, $\sigma=\frac{n+2}{2}$ and $h=2^{-4k}$. 
On the other hand, by Plancherel's formula {\color{red} \eqref{Plancherel}}, we get the energy estimate
\begin{equation*}
\|U^{low}(t)\|_{L^2_{x,y}(\mathbb{R}^{n+2})\rightarrow L^2_{x,y}(\mathbb{R}^{n+2})}\lesssim 1,\quad \|U_k(t)\|_{L^2_{x,y}(\mathbb{R}^{n+2})\rightarrow L^2_{x,y}(\mathbb{R}^{n+2})}\lesssim 1,\quad \forall k\geq1.
\end{equation*}
Using Keel-Tao's argument in Proposition \ref{Keel-Tao argument} and Theorem 1.2 in Keel-Tao \cite{KT}, it gives that
\begin{align*}
\|U_k(t)f\|_{L_t^q(\mathbb{R},L^r_{x,y}(\mathbb{R}^{2+n}))}&\lesssim 2^{k[(n+2)(\frac{1}{2}-\frac{1}{r})-\frac{4}{q}]}\|f\|_{L^2(\mathbb{R}^{2+n})}, \quad\forall k\geq1,\\
\|U^{low}(t)f\|_{L_t^q(\mathbb{R},L^r_{x,y}(\mathbb{R}^{2+n}))}&\lesssim \|f\|_{L^2(\mathbb{R}^{2+n})},
\end{align*}
for $q\in[2,\infty]$, $r\in [2,\infty)$ such that $\frac{2}{q}+\frac{n+2}{r}\leq \frac{n+2}{2}$. Notice that
\begin{equation*}
e^{it(\mathcal{L}_a^2+\mathcal{L}_a)}f=U^{low}(t)f+\sum_{k=1}^\infty U_k(t)f,
\end{equation*}
and $U_k(t)\psi(2^{-j}\sqrt{\mathcal{L}_a})f$ vanishes if $|j-k|\geq3$. Hence, 
\begin{align*}
\|U^{low}(t)f\|_{L_t^q(\mathbb{R},L^r_{x,y}(\mathbb{R}^{n+2}))}&=\|U^{low}(t)(\varphi(\sqrt{\mathcal{L}_a})+\sum_{j=1}^2\psi(2^{-j}\sqrt{\mathcal{L}_a}))f\|_{L_t^q(\mathbb{R},L^r_{x,y}(\mathbb{R}^{n+2}))}\\
&\lesssim \|(\varphi(\sqrt{\mathcal{L}_a})+\sum_{j=1}^2\psi(2^{-j}\sqrt{\mathcal{L}_a}))f\|_{L^2_{x,y}(\mathbb{R}^{n+2})}\\
&\leq \|\varphi(\sqrt{\mathcal{L}_a})f\|_{L^2_{x,y}(\mathbb{R}^{n+2})}+\sum_{j=1}^2\|\psi(2^{-j}\sqrt{\mathcal{L}_a})f\|_{L^2_{x,y}(\mathbb{R}^{n+2})},
\end{align*}
and
\begin{align*}
&\quad \|\sum_{k=1}^\infty U_k(t)f\|_{L_t^q(\mathbb{R},L^r_{x,y}(\mathbb{R}^{n+2}))}\\
&=\|\sum_{j\in\mathbb{Z}}\sum_{k=1}^\infty U_k(t)\psi(2^{-j}\sqrt{\mathcal{L}_a})f\|_{L_t^q(\mathbb{R},L^r_{x,y}(\mathbb{R}^{n+2}))}\\
&=\Big\|\sum_{j=-1}^3U_1(t)\psi(2^{-j}\sqrt{\mathcal{L}_a})f+\sum_{j=0}^4U_2(t)\psi(2^{-j}\sqrt{\mathcal{L}_a})f\\
&\qquad\qquad\qquad\qquad\qquad\qquad\qquad+\sum_{j\in\mathbb{Z}}^\infty\sum_{k=3}^\infty U_k(t)\psi(2^{-j}\sqrt{\mathcal{L}_a})f\Big\|_{L_t^q(\mathbb{R},L^r_{x,y}(\mathbb{R}^{n+2}))}.\\
&\leq \sum_{j=-1}^3\|U_1(t)\psi(2^{-j}\sqrt{\mathcal{L}_a})f\|_{L_t^q(\mathbb{R},L^r_{x,y}(\mathbb{R}^{n+2}))}+\sum_{j=0}^4\|U_2(t)\psi(2^{-j}\sqrt{\mathcal{L}_a})f\|_{L_t^q(\mathbb{R},L^r_{x,y}(\mathbb{R}^{n+2}))}\\
&\qquad\qquad\qquad\qquad\qquad\qquad\qquad+\|\sum_{j\in\mathbb{Z}}^\infty\sum_{k=3}^\infty U_k(t)\psi(2^{-j}\sqrt{\mathcal{L}_a})f\|_{L_t^q(\mathbb{R},L^r_{x,y}(\mathbb{R}^{n+2}))}.\\
&\lesssim \sum_{j=-1}^4\|\psi(2^{-j}\sqrt{\mathcal{L}_a})f\|_{L^2_{x,y}(\mathbb{R}^{n+2})}+\|\sum_{j\in\mathbb{Z}}^\infty\sum_{k=3}^\infty U_k(t)\psi(2^{-j}\sqrt{\mathcal{L}_a})f\|_{L_t^q(\mathbb{R},L^r_{x,y}(\mathbb{R}^{n+2}))}.
\end{align*}
Using the Littlewood-Paley square inequality in Proposition \ref{LP inequality} and the Minkowski inequality, we have
\begin{align*}
&\quad\|\sum_{j\in\mathbb{Z}}^\infty\sum_{k=3}^\infty U_k(t)\psi(2^{-j}\sqrt{\mathcal{L}_a})f\|_{L_t^q(\mathbb{R},L^r_{x,y}(\mathbb{R}^{n+2}))}\\
&\lesssim \|\Big(\sum_{j\in\mathbb{Z}}^\infty |\sum_{k=3}^\infty U_k(t)\psi(2^{-j}\sqrt{\mathcal{L}_a})f|^2\Big)^\frac{1}{2}\|_{L_t^q(\mathbb{R},L^r_{x,y}(\mathbb{R}^{n+2}))}\\
&\lesssim \|\Big(\sum_{j\in\mathbb{Z}}^\infty \sum_{\substack{k\geq 3\\ |k-j|\leq 2}} |U_k(t)\psi(2^{-j}\sqrt{\mathcal{L}_a})f|^2\Big)^\frac{1}{2}\|_{L_t^q(\mathbb{R},L^r_{x,y}(\mathbb{R}^{n+2}))}\\
&\leq \Big(\sum_{j\in\mathbb{Z}}^\infty \sum_{\substack{k\geq 3\\ |k-j|\leq 2}} \|U_k(t)\psi(2^{-j}\sqrt{\mathcal{L}_a})f\|_{L_t^q(\mathbb{R},L^r_{x,y}(\mathbb{R}^{n+2}))}^2\Big)^\frac{1}{2}\\
&\lesssim \|\Big(\sum_{j\in\mathbb{Z}}^\infty \sum_{\substack{k\geq 3\\ |k-j|\leq 2}} 2^{2k[(n+2)(\frac{1}{2}-\frac{1}{r})-\frac{4}{q}]}\|\psi(2^{-j}\sqrt{\mathcal{L}_a})f\|^2_{L^2(\mathbb{R}^{2+n})}\Big)^\frac{1}{2}\|_{L_t^q(\mathbb{R},L^r_{x,y}(\mathbb{R}^{n+2}))}\\
&\lesssim \Big(\sum_{j\geq1}^\infty 2^{2j[(n+2)(\frac{1}{2}-\frac{1}{r})-\frac{4}{q}]}\|\psi(2^{-j}\sqrt{\mathcal{L}_a})f\|^2_{L^2(\mathbb{R}^{2+n})}\Big)^\frac{1}{2}.
\end{align*}
Therefore, we get
\begin{align*}
&\quad \|e^{it(\mathcal{L}_a^2+\mathcal{L}_a)}f\|_{L_t^q(\mathbb{R},L^r_{x,y}(\mathbb{R}^{n+2}))}\\
&\lesssim \|U^{low}(t)f\|_{L_t^q(\mathbb{R},L^r_{x,y}(\mathbb{R}^{n+2}))}+\|\sum_{k=1}^\infty U_k(t)f\|_{L_t^q(\mathbb{R},L^r_{x,y}(\mathbb{R}^{n+2}))}\\
&\lesssim\|\varphi(\sqrt{\mathcal{L}_a})f\|_{L^2_{x,y}(\mathbb{R}^{n+2})}+\sum_{j=-1}^4\|\psi(2^{-j}\sqrt{\mathcal{L}_a})f\|_{L^2_{x,y}(\mathbb{R}^{n+2})}\\
&\qquad\qquad\qquad\qquad+\Big(\sum_{j\geq1}^\infty 2^{2j[(n+2)(\frac{1}{2}-\frac{1}{r})-\frac{4}{q}]}\|\psi(2^{-j}\sqrt{\mathcal{L}_a})f\|^2_{L^2(\mathbb{R}^{2+n})}\Big)^\frac{1}{2}\\
&\lesssim\|\varphi(\sqrt{\mathcal{L}_a})f\|_{L^2_{x,y}(\mathbb{R}^{n+2})}+\Big(\sum_{j\geq1}^\infty 2^{2j[(n+2)(\frac{1}{2}-\frac{1}{r})-\frac{4}{q}]}\|\psi(2^{-j}\sqrt{\mathcal{L}_a})f\|^2_{L^2(\mathbb{R}^{2+n})}\Big)^\frac{1}{2}\\
&=\|f\|_{H_a^{(n+2)(\frac{1}{2}-\frac{1}{r})-\frac{4}{q}}},
\end{align*}
which completes the proof.
\end{proof}

\subsection{Klein-Gordon equation}
Moreover, we consider the Klein-Gordon equation
\begin{equation}\label{K-GEqu}
	\begin{cases}
	\partial_t^2u+\mathcal{L}_a u+u=0,\;t\in\mathbb{R},\; (x,y)\in\mathbb{R}^{2+n},\\
	u(0,x,y)=f(x,y),\; \partial_tu(0,x,y)=g(x,y).
	\end{cases}
	\end{equation}
By Duhamel's principle, the solution is formally given by
\begin{equation*}
	u(t,x,y)=\frac{e^{it\sqrt{1+\mathcal{L}_a}}+e^{-it\sqrt{1+\mathcal{L}_a}}}{2}f(x,y)+\frac{e^{it\sqrt{1+\mathcal{L}_a}}-e^{-it\sqrt{1+\mathcal{L}_a}}}{2i\sqrt{1+\mathcal{L}_a}}g(x,y).
	\end{equation*}
So we naturally consider the operator $e^{it\sqrt{1+\mathcal{L}_a}}$, which corresponds to the case when $\phi(\lambda)=\sqrt{1+\lambda^2}$.
By a simple calculation have
	\begin{equation*}
	\phi'(\lambda)=\lambda(1+\lambda^2)^{-\frac{1}{2}}, \;
	\phi''(\lambda)=(1+\lambda^2)^{-\frac{3}{2}}.
	\end{equation*}
This shows that $\phi$ satisfies (C1)-(C4) with $m_1=1,\alpha_1=-1, $ and $m_2=\alpha_2=2$. Using Theorem \ref{ResultTime}, we obtain the following Strichartz estimate for $e^{it\sqrt{1+\mathcal{L}_a}}$.
\begin{theorem}\label{Klein-Gordon estimate}
For any $\eta\in [0,1]$. Let $s\geq0$, $q\in [2,\infty]$ and $r\in [2,\infty)$satisfy
\begin{equation*}
\frac{2}{q}+\frac{n+1+\eta}{r}\leq \frac{n+1+\eta}{2},\quad s=(n+2+\eta)\left(\frac{1}{2}-\frac{1}{r}\right)-\frac{1}{q}.
\end{equation*}
Then we have
\begin{equation*}
\|e^{it\sqrt{1+\mathcal{L}_a}}f\|_{L_t^q(\mathbb{R},L^r_{x,y}(\mathbb{R}^{n+2}))}\lesssim \|f\|_{H^s_a(\mathbb{R}^{n+2})}.
\end{equation*}
\end{theorem}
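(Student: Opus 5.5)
Split $e^{it\sqrt{1+\mathcal{L}_a}}f=U^{low}(t)f+\sum_{k\geq1}U_k(t)f$ with $U^{low}(t)=e^{it\sqrt{1+\mathcal{L}_a}}\varphi(\sqrt{\mathcal{L}_a})$ and $U_k(t)=e^{it\sqrt{1+\mathcal{L}_a}}\psi(2^{-k}\sqrt{\mathcal{L}_a})$, exactly as in the fourth-order Schr\"odinger case. For $\phi(\lambda)=\sqrt{1+\lambda^2}$ we have $\phi'(\lambda)\sim\lambda$, $\phi''(\lambda)\sim 1$ for $\lambda<1$, so (C2), (C4) hold with $m_2=\alpha_2=2$. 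For $\lambda\geq1$, $\phi'\sim 1$ and $\phi''\sim\lambda^{-3}$, so (C1), (C3) hold with $m_1=1$ and $\alpha_1=-1$. The positivity of $\alpha_1$ in (C3) is violated only formally: the proof of part \textbf{(A)} uses only $|\partial_\lambda^2\phi(2^k\lambda)|=2^{2k}|\phi''(2^k\lambda)|\sim 2^{k\alpha_1}$ together with the van der Corput lemma, and that step holds verbatim for $\alpha_1=-1$.

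\textbf{High frequencies.} Apply Theorem~\ref{ResultTime}\textbf{(A)} with $m_1=1$, $\alpha_1=-1$ and $\theta=\eta\in[0,1]$ to get
\begin{equation*}
\|U_k(t)f\|_{L^\infty}\lesssim |t|^{-\frac{n+1+\eta}{2}}2^{k\left(n+2-\frac{n+1+\eta}{2}+\eta\right)}\|f\|_{L^1}=|t|^{-\frac{n+1+\eta}{2}}2^{k\frac{n+3+\eta}{2}}\|f\|_{L^1}.
\end{equation*}
Combine this with the trivial bound $2^{k(n+2)}$ (the case $\theta=0$), using $\sigma=\frac{n+1+\eta}{2}$ and $2^{k(n+2)}=2^{k\frac{n+3+\eta}{2}}\,2^{k\sigma}$. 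This gives
\begin{equation*}
\|U_k(t)U_k(s)^*\|_{L^1\to L^\infty}\lesssim 2^{k\frac{n+3+\eta}{2}}\left(2^{-k}+|t-s|\right)^{-\frac{n+1+\eta}{2}}.
\end{equation*}
Here I use that $U_k(t)U_k(s)^*=e^{i(t-s)\phi}\psi^2(2^{-k}\sqrt{\mathcal{L}_a})$ and that $\psi^2$ satisfies the same bounds as $\psi$. This is \eqref{1 to infty} with $h=2^{-k}$, $\alpha=\frac{n+3+\eta}{2}$ and $\sigma=\frac{n+1+\eta}{2}$. Energy bounds follow from Plancherel \eqref{Plancherel}. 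Proposition~\ref{Keel-Tao argument} then gives, for $\frac{2}{q}+\frac{n+1+\eta}{r}\leq\frac{n+1+\eta}{2}$,
\begin{equation*}
\|U_kf\|_{L^q_tL^r}\lesssim 2^{k[(n+2+\eta)(\frac12-\frac1r)-\frac1q]}\|f\|_{L^2},
\end{equation*}
since $(\alpha+\sigma)=n+2+\eta$. The endpoint exclusion $(q,r,\sigma)\neq(2,\infty,1)$ is harmless because $r<\infty$.

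\textbf{Low frequencies.} Theorem~\ref{ResultTime}\textbf{(C)} with $m_2=\alpha_2=2$ gives $(1+|t|)^{-\frac{n+2}{2}}$ decay for $U^{low}U^{low*}$. The symbol $\varphi^2$ is again of the allowed type, supported in $[0,1]$. Since $(1+|t|)^{-\frac{n+2}{2}}\leq(1+|t|)^{-\sigma}$ for $\sigma\leq\frac{n+2}{2}$, Proposition~\ref{Keel-Tao argument} with $h=1$ and $\sigma=\frac{n+1+\eta}{2}$ (or Keel--Tao directly) gives $\|U^{low}f\|_{L^q_tL^r}\lesssim\|f\|_{L^2}$ on the same admissible range.

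\textbf{Summation.} Sum the pieces exactly as in the fourth-order proof. Treat finitely many low indices $k=1,2$ by the triangle inequality. For $k\geq3$, insert $\psi(2^{-j}\sqrt{\mathcal{L}_a})$ with $|j-k|\leq2$, then apply the square function estimate of Proposition~\ref{LP inequality} (valid since $2\leq r<\infty$) and Minkowski, using $q,r\geq2$. This yields
\begin{equation*}
\|\varphi(\sqrt{\mathcal{L}_a})f\|_{L^2}+\Big(\sum_{j\geq1}2^{2js}\|\psi(2^{-j}\sqrt{\mathcal{L}_a})f\|_{L^2}^2\Big)^{1/2}\sim\|f\|_{H^s_a}.
\end{equation*}
This is where $s\geq0$ is needed: it lets the finitely many terms $j\leq 4$ be absorbed.

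The main obstacle is the first step: checking that the part \textbf{(A)} argument really tolerates the negative exponent $\alpha_1=-1$. One must verify that in the stationary sub-case $|(x,y)|\sim t$, van der Corput with $\delta\sim t2^{-k}$ gives $(t2^{-k})^{-1/2}(2^kt)^{-\frac{n+1}{2}}$. One must also verify that interpolating this against the $\theta=\frac{n+1}{2}$ bound yields the exponent $k\frac{n+3+\eta}{2}$ claimed above.
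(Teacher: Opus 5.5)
Your proposal is correct and follows the paper's proof essentially step by step. It uses the same splitting into $U^{low}$ and $U_k$ ($k\geq1$), part \textbf{(A)} with $m_1=1$, $\alpha_1=-1$, $\theta=\eta$, Proposition~\ref{Keel-Tao argument} with $\alpha=\frac{n+3+\eta}{2}$, $\sigma=\frac{n+1+\eta}{2}$, $h=2^{-k}$, and the same Littlewood--Paley summation.

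There is one small arithmetic slip: $2^{k\frac{n+3+\eta}{2}}2^{k\sigma}=2^{k(n+2+\eta)}$, not $2^{k(n+2)}$. Since $k\geq1$, the trivial bound $2^{k(n+2)}$ is still dominated by this, so the combined dispersive estimate stands. Also, your appeal to the second estimate of part \textbf{(C)}, which gives $(1+|t|)^{-\frac{n+2}{2}}$ for $U^{low}$, is a cleaner justification than the paper's text, which only quotes $\theta\leq\frac{n+1}{2}$ there.
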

\begin{proof} Denote $U^{low}(t)=e^{it\sqrt{1+\mathcal{L}_a}}\varphi(\sqrt{\mathcal{L}_a})$ and $U_k(t)=e^{it\sqrt{1+\mathcal{L}_a}}\psi(2^{-k}\sqrt{\mathcal{L}_a})$, for any $k\in\mathbb{Z}$.
It follows from Theorem \ref{ResultTime} that for any $0\leq \theta\leq \frac{n+1}{2}$, $k\geq1$ and $0\leq \eta\leq 1$,
\begin{align*}
\|U_k(t)f\|_{L^\infty_{x,y}(\mathbb{R}^{n+2})}&\lesssim |t|^{-\theta}2^{k(n+2-\theta)}\|f\|_{L^1_{x,y}(\mathbb{R}^{n+2})},\\
\|U_k(t)f\|_{L^\infty_{x,y}(\mathbb{R}^{n+2})}&\lesssim |t|^{-\frac{n+1+\eta}{2}}2^{k(n+2-\frac{n+1-\eta}{2})}\|f\|_{L^1_{x,y}(\mathbb{R}^{n+2})},\\
\|U^{low}(t)f\|_{L^\infty_{x,y}(\mathbb{R}^{n+2})}&\lesssim (1+|t|)^{-\theta}\|f\|_{L^1_{x,y}(\mathbb{R}^{n+2})},
\end{align*}
which implies that
\begin{align*}
\|U_k(t)f\|_{L^\infty_{x,y}(\mathbb{R}^{n+2})}&\lesssim 2^{\frac{n+3+\eta}{2}k}(2^{-k}+|t|)^{-\frac{n+1+\eta}{2}}\|f\|_{L^1_{x,y}(\mathbb{R}^{n+2})},\\
\|U^{low}(t)f\|_{L^\infty_{x,y}(\mathbb{R}^{n+2})}&\lesssim (1+|t|)^{-\frac{n+1+\eta}{2}}\|f\|_{L^1_{x,y}(\mathbb{R}^{n+2})},
\end{align*}
Then we have for any $k\geq1$ and $0\leq \eta\leq 1$,
\begin{align*}
\|U_k(t)U_k(s)^*\|_{L^1_{x,y}(\mathbb{R}^{n+2})\rightarrow L^\infty_{x,y}(\mathbb{R}^{n+2})}&\lesssim 2^{\frac{n+3+\eta}{2}k}(2^{-k}+|t-s|)^{-\frac{n+1+\eta}{2}},\\
\|U^{low}(t)U^{low}(s)^*\|_{L^1_{x,y}(\mathbb{R}^{n+2})\rightarrow L^\infty_{x,y}(\mathbb{R}^{n+2})}&\lesssim (1+|t-s|)^{-\frac{n+1+\eta}{2}},
\end{align*}
where $U_k(t)$ satisfies the estimates \eqref{1 to infty} in Proposition \ref{Keel-Tao argument} for $\alpha=\frac{n+3+\eta}{2}$, $\sigma=\frac{n+1+\eta}{2}$ and $h=2^{-k}$. 
On the other hand, by Plancherel's formula \eqref{Plancherel}, we get the energy estimate
\begin{equation*}
\|U^{low}(t)\|_{L^2_{x,y}(\mathbb{R}^{n+2})\rightarrow L^2_{x,y}(\mathbb{R}^{n+2})}\lesssim 1,\quad \|U_k(t)\|_{L^2_{x,y}(\mathbb{R}^{n+2})\rightarrow L^2_{x,y}(\mathbb{R}^{n+2})}\lesssim 1,\quad \forall k\geq1.
\end{equation*}
Using Keel-Tao's argument in Proposition \ref{Keel-Tao argument} and Theorem 1.2 in Keel-Tao \cite{KT}, it gives that
\begin{align*}
\|U_k(t)f\|_{L_t^q(\mathbb{R},L^r_{x,y}(\mathbb{R}^{2+n}))}&\lesssim 2^{k[(n+2+\eta)(\frac{1}{2}-\frac{1}{r})-\frac{1}{q}]}\|f\|_{L^2(\mathbb{R}^{2+n})}, \quad\forall k\geq1,\\
\|U^{low}(t)f\|_{L_t^q(\mathbb{R},L^r_{x,y}(\mathbb{R}^{2+n}))}&\lesssim \|f\|_{L^2(\mathbb{R}^{2+n})},
\end{align*}
for any $0\leq \eta\leq1$ and $q\in[2,\infty]$, $r\in [2,\infty)$ such that $\frac{2}{q}+\frac{n+1+\eta}{r}\leq \frac{n+1+\eta}{2}$. Notice that
\begin{equation*}
e^{it\sqrt{1+\mathcal{L}_a}}f=U^{low}(t)f+\sum_{k=1}^\infty U_k(t)f,
\end{equation*}
and $U_k(t)\psi(2^{-j}\sqrt{\mathcal{L}_a})f$ vanishes if $|j-k|\geq3$. Hence, 
\begin{align*}
\|U^{low}(t)f\|_{L_t^q(\mathbb{R},L^r_{x,y}(\mathbb{R}^{n+2}))}&=\|U^{low}(t)(\varphi(\sqrt{\mathcal{L}_a})+\sum_{j=1}^2\psi(2^{-j}\sqrt{\mathcal{L}_a}))f\|_{L_t^q(\mathbb{R},L^r_{x,y}(\mathbb{R}^{n+2}))}\\
&\lesssim \|(\varphi(\sqrt{\mathcal{L}_a})+\sum_{j=1}^2\psi(2^{-j}\sqrt{\mathcal{L}_a}))f\|_{L^2_{x,y}(\mathbb{R}^{n+2})}\\
&\leq \|\varphi(\sqrt{\mathcal{L}_a})f\|_{L^2_{x,y}(\mathbb{R}^{n+2})}+\sum_{j=1}^2\|\psi(2^{-j}\sqrt{\mathcal{L}_a})f\|_{L^2_{x,y}(\mathbb{R}^{n+2})},
\end{align*}
and
\begin{align*}
&\quad \|\sum_{k=1}^\infty U_k(t)f\|_{L_t^q(\mathbb{R},L^r_{x,y}(\mathbb{R}^{n+2}))}\\
&=\|\sum_{j\in\mathbb{Z}}\sum_{k=1}^\infty U_k(t)\psi(2^{-j}\sqrt{\mathcal{L}_a})f\|_{L_t^q(\mathbb{R},L^r_{x,y}(\mathbb{R}^{n+2}))}\\
&=\Big\|\sum_{j=-1}^3U_1(t)\psi(2^{-j}\sqrt{\mathcal{L}_a})f+\sum_{j=0}^4U_2(t)\psi(2^{-j}\sqrt{\mathcal{L}_a})f\\
&\qquad\qquad\qquad\qquad\qquad\qquad\qquad+\sum_{j\in\mathbb{Z}}^\infty\sum_{k=3}^\infty U_k(t)\psi(2^{-j}\sqrt{\mathcal{L}_a})f\Big\|_{L_t^q(\mathbb{R},L^r_{x,y}(\mathbb{R}^{n+2}))}.\\
&\leq \sum_{j=-1}^3\|U_1(t)\psi(2^{-j}\sqrt{\mathcal{L}_a})f\|_{L_t^q(\mathbb{R},L^r_{x,y}(\mathbb{R}^{n+2}))}+\sum_{j=0}^4\|U_2(t)\psi(2^{-j}\sqrt{\mathcal{L}_a})f\|_{L_t^q(\mathbb{R},L^r_{x,y}(\mathbb{R}^{n+2}))}\\
&\qquad\qquad\qquad\qquad\qquad\qquad\qquad+\|\sum_{j\in\mathbb{Z}}^\infty\sum_{k=3}^\infty U_k(t)\psi(2^{-j}\sqrt{\mathcal{L}_a})f\|_{L_t^q(\mathbb{R},L^r_{x,y}(\mathbb{R}^{n+2}))}.\\
&\lesssim \sum_{j=-1}^4\|\psi(2^{-j}\sqrt{\mathcal{L}_a})f\|_{L^2_{x,y}(\mathbb{R}^{n+2})}+\|\sum_{j\in\mathbb{Z}}^\infty\sum_{k=3}^\infty U_k(t)\psi(2^{-j}\sqrt{\mathcal{L}_a})f\|_{L_t^q(\mathbb{R},L^r_{x,y}(\mathbb{R}^{n+2}))}.
\end{align*}
Using the Littlewood-Paley square inequality in Proposition \ref{LP inequality} and the Minkowski inequality, we have
\begin{align*}
&\quad\|\sum_{j\in\mathbb{Z}}^\infty\sum_{k=3}^\infty U_k(t)\psi(2^{-j}\sqrt{\mathcal{L}_a})f\|_{L_t^q(\mathbb{R},L^r_{x,y}(\mathbb{R}^{n+2}))}\\
&\lesssim \|\Big(\sum_{j\in\mathbb{Z}}^\infty |\sum_{k=3}^\infty U_k(t)\psi(2^{-j}\sqrt{\mathcal{L}_a})f|^2\Big)^\frac{1}{2}\|_{L_t^q(\mathbb{R},L^r_{x,y}(\mathbb{R}^{n+2}))}\\
&\lesssim \|\Big(\sum_{j\in\mathbb{Z}}^\infty \sum_{\substack{k\geq 3\\ |k-j|\leq 2}} |U_k(t)\psi(2^{-j}\sqrt{\mathcal{L}_a})f|^2\Big)^\frac{1}{2}\|_{L_t^q(\mathbb{R},L^r_{x,y}(\mathbb{R}^{n+2}))}\\
&\leq \Big(\sum_{j\in\mathbb{Z}}^\infty \sum_{\substack{k\geq 3\\ |k-j|\leq 2}} \|U_k(t)\psi(2^{-j}\sqrt{\mathcal{L}_a})f\|_{L_t^q(\mathbb{R},L^r_{x,y}(\mathbb{R}^{n+2}))}^2\Big)^\frac{1}{2}\\
&\lesssim \|\Big(\sum_{j\in\mathbb{Z}}^\infty \sum_{\substack{k\geq 3\\ |k-j|\leq 2}} 2^{2k[(n+2+\eta)(\frac{1}{2}-\frac{1}{r})-\frac{1}{q}]}\|\psi(2^{-j}\sqrt{\mathcal{L}_a})f\|^2_{L^2(\mathbb{R}^{2+n})}\Big)^\frac{1}{2}\|_{L_t^q(\mathbb{R},L^r_{x,y}(\mathbb{R}^{n+2}))}\\
&\lesssim \Big(\sum_{j\geq1}^\infty 2^{2j[(n+2+\eta)(\frac{1}{2}-\frac{1}{r})-\frac{1}{q}]}\|\psi(2^{-j}\sqrt{\mathcal{L}_a})f\|^2_{L^2(\mathbb{R}^{2+n})}\Big)^\frac{1}{2}.
\end{align*}
Therefore, we get
\begin{align*}
&\quad \|e^{it\sqrt{1+\mathcal{L}_a}}f\|_{L_t^q(\mathbb{R},L^r_{x,y}(\mathbb{R}^{n+2}))}\\
&\lesssim \|U^{low}(t)f\|_{L_t^q(\mathbb{R},L^r_{x,y}(\mathbb{R}^{n+2}))}+\|\sum_{k=1}^\infty U_k(t)f\|_{L_t^q(\mathbb{R},L^r_{x,y}(\mathbb{R}^{n+2}))}\\
&\lesssim\|\varphi(\sqrt{\mathcal{L}_a})f\|_{L^2_{x,y}(\mathbb{R}^{n+2})}+\sum_{j=-1}^4\|\psi(2^{-j}\sqrt{\mathcal{L}_a})f\|_{L^2_{x,y}(\mathbb{R}^{n+2})}\\
&\qquad\qquad\qquad\qquad+\Big(\sum_{j\geq1}^\infty 2^{2j[(n+2+\eta)(\frac{1}{2}-\frac{1}{r})-\frac{1}{q}]}\|\psi(2^{-j}\sqrt{\mathcal{L}_a})f\|^2_{L^2(\mathbb{R}^{2+n})}\Big)^\frac{1}{2}\\
&\lesssim\|\varphi(\sqrt{\mathcal{L}_a})f\|_{L^2_{x,y}(\mathbb{R}^{n+2})}+\Big(\sum_{j\geq1}^\infty 2^{2j[(n+2+\eta)(\frac{1}{2}-\frac{1}{r})-\frac{1}{q}]}\|\psi(2^{-j}\sqrt{\mathcal{L}_a})f\|^2_{L^2(\mathbb{R}^{2+n})}\Big)^\frac{1}{2}\\
&=\|f\|_{H_a^{(n+2+\eta)(\frac{1}{2}-\frac{1}{r})-\frac{1}{q}}},
\end{align*}
which completes the proof.
\end{proof}
By Theorem \ref{Klein-Gordon estimate}, we immediately deduce the Strichartz inequality for the solution of the Klein-Gordon equation \eqref{K-GEqu}. 
\begin{corollary}
Under the same hypotheses as in Theorem \ref{Klein-Gordon estimate}, the solution $u$ of the Klein-Gordon equation \eqref{K-GEqu} satisfies the following estimate
	\begin{equation*}
	\|u\|_{L_t^q(\mathbb{R},L^r_{x,y}(\mathbb{R}^{2+n}))}\lesssim \|f\|_{H^s_a(\mathbb{R}^{2+n})}+\|g\|_{H^{s-1}_a(\mathbb{R}^{2+n})}.
	\end{equation*}
\end{corollary}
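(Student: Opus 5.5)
The corollary follows from Theorem \ref{Klein-Gordon estimate} applied to each piece of the Duhamel formula for $u$, plus the fact that spectral multipliers commute with $e^{\pm it\sqrt{1+\mathcal{L}_a}}$. We write
\begin{equation*}
u(t)=\tfrac12\big(e^{it\sqrt{1+\mathcal{L}_a}}+e^{-it\sqrt{1+\mathcal{L}_a}}\big)f+\tfrac{1}{2i}\big(e^{it\sqrt{1+\mathcal{L}_a}}-e^{-it\sqrt{1+\mathcal{L}_a}}\big)(1+\mathcal{L}_a)^{-\frac12}g .
\end{equation*}
By the triangle inequality in $L_t^q(\mathbb{R},L^r_{x,y})$ it suffices to bound the four terms $e^{\pm it\sqrt{1+\mathcal{L}_a}}f$ and $e^{\pm it\sqrt{1+\mathcal{L}_a}}(1+\mathcal{L}_a)^{-1/2}g$.

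For the $+$ sign, Theorem \ref{Klein-Gordon estimate} gives the bound $\|f\|_{H^s_a}$ directly. For the $-$ sign, note that $e^{-it\sqrt{1+\mathcal{L}_a}}f=\overline{e^{it\sqrt{1+\mathcal{L}_a}}\bar f}$. This holds because $\mathcal{L}_a$ has a real potential and therefore commutes with complex conjugation, so functional calculus with real $\phi$ gives $\overline{\phi(\mathcal{L}_a)h}=\bar\phi(\mathcal{L}_a)\bar h$. Conjugation also preserves both the $L^r$ norms and the $H^s_a$ norm, since $\|(1+\mathcal{L}_a)^{s/2}\bar f\|_{L^2}=\|(1+\mathcal{L}_a)^{s/2}f\|_{L^2}$. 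Alternatively, one can note that the proof of Theorem \ref{ResultTime} only used $|\phi'|$ and $|\phi''|$, so it applies verbatim to $-\phi$.

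For the $g$-terms, we apply the estimate just obtained to $h=(1+\mathcal{L}_a)^{-1/2}g$, using that $(1+\mathcal{L}_a)^{-1/2}$ commutes with the propagator. Then
\begin{equation*}
\|h\|_{H^s_a}=\|(1+\mathcal{L}_a)^{\frac s2}(1+\mathcal{L}_a)^{-\frac12}g\|_{L^2}=\|g\|_{H^{s-1}_a}
\end{equation*}
by the definition of the inhomogeneous Sobolev norm via $(1+\mathcal{L}_a)^{s/2}$. Summing the four bounds gives the claim.

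The only point needing care is the $-$ sign, that is, confirming that the hypotheses (C1)–(C4), and hence Theorem \ref{Klein-Gordon estimate}, hold for $-\phi$ or transfer by conjugation. This is routine, since only absolute values of the derivatives of $\phi$ enter.
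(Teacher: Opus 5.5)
Your proposal is correct and is exactly the argument the paper leaves implicit when it says the corollary follows immediately from Theorem \ref{Klein-Gordon estimate}. You expand $u$ by the Duhamel formula, apply the theorem to $e^{\pm it\sqrt{1+\mathcal{L}_a}}f$ and to $e^{\pm it\sqrt{1+\mathcal{L}_a}}(1+\mathcal{L}_a)^{-1/2}g$, and use $\|(1+\mathcal{L}_a)^{-1/2}g\|_{H^s_a}=\|g\|_{H^{s-1}_a}$. The $-$ sign terms are even simpler than you make them: $e^{-it\sqrt{1+\mathcal{L}_a}}f$ is the $+$ propagator evaluated at $-t$, and the $L^q_t(\mathbb{R})$ norm is invariant under $t\mapsto -t$, so neither conjugation nor a re-check of (C1)--(C4) for $-\phi$ is needed.
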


 \subsection{Beam equation}
Finally, we consider the beam equation
\begin{equation}\label{BeamEqu}
	\begin{cases}
	\partial_t^2u+\mathcal{L}^2_a u+u=0,\;t\in\mathbb{R},\; (x,y)\in\mathbb{R}^{2+n},\\
	u(0,x,y)=f(x,y),\; \partial_tu(0,x,y)=g(x,y).
	\end{cases}
	\end{equation}
By Duhamel's principle, the solution is formally given by
\begin{equation*}
	u(t,x,y)=\frac{e^{it\sqrt{1+\mathcal{L}^2_a}}+e^{-it\sqrt{1+\mathcal{L}^2_a}}}{2}f(x,y)+\frac{e^{it\sqrt{1+\mathcal{L}^2_a}}-e^{-it\sqrt{1+\mathcal{L}^2_a}}}{2i\sqrt{1+\mathcal{L}^2_a}}g(x,y).
	\end{equation*}
So we shall discuss the operator $e^{it\sqrt{1+\mathcal{L}^2_a}}$, which corresponds to the case when $\phi(\lambda)=\sqrt{1+\lambda^4}$.
By a simple calculation, we see that
	\begin{equation*}
	\phi'(\lambda)=2\lambda^3(1+\lambda^4)^{-\frac{1}{2}}, \quad
	\phi''(\lambda)=(6\lambda^2+2\lambda^6)(1+\lambda^4)^{-\frac{3}{2}}.
	\end{equation*}
This shows that $\phi$ satisfies (C1)-(C4) with $m_1=\alpha_1=2, m_2=\alpha_2=4$.  Using  Theorem \ref{ResultTime}, we obtain the following Strichartz estimate for $e^{it\sqrt{1+\mathcal{L}^2_a}}$.
\begin{theorem}\label{Beam estimate}
Let $s\geq 0$, $q\in [2,\infty]$ and  $r\in [2,\infty)$ satisfy
\begin{equation*}
\frac{2}{q}+\frac{n+2}{r}\leq \frac{n+2}{2},\quad s=(n+2)\left(\frac{1}{2}-\frac{1}{r}\right)-\frac{2}{q}.
\end{equation*}
Then we have
\begin{equation*}
\|e^{it\sqrt{1+\mathcal{L}^2_a}}f\|_{L_t^q(\mathbb{R},L^r_{x,y}(\mathbb{R}^{n+2}))}\lesssim \|f\|_{H^s_a(\mathbb{R}^{n+2})}.
\end{equation*}
\end{theorem}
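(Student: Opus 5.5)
We follow the same scheme as for the fourth-order Schr\"odinger and Klein--Gordon cases. Set $U^{low}(t)=e^{it\sqrt{1+\mathcal{L}^2_a}}\varphi(\sqrt{\mathcal{L}_a})$ and $U_k(t)=e^{it\sqrt{1+\mathcal{L}^2_a}}\psi(2^{-k}\sqrt{\mathcal{L}_a})$ for $k\geq1$.

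\textbf{Dispersive bounds.} Here $\phi(\lambda)=\sqrt{1+\lambda^4}$ satisfies (C1),(C3) with $m_1=\alpha_1=2$ and (C2),(C4) with $m_2=\alpha_2=4$. For the high frequencies, part \textbf{(A)} of Theorem \ref{ResultTime} with (C3) and $\theta=1$ gives, for $0\leq\theta'\leq\frac{n+2}{2}$ (interpolating with the $\theta'$-family from (C1)),
\begin{equation*}
\|U_k(t)f\|_{L^\infty}\lesssim |t|^{-\theta'}2^{k(n+2-2\theta')}\|f\|_{L^1},\quad k\geq1.
\end{equation*}
Taking $\theta'=0$ and $\theta'=\frac{n+2}{2}$ yields
\begin{equation*}
\|U_k(t)f\|_{L^\infty}\lesssim (2^{-2k}+|t|)^{-\frac{n+2}{2}}\|f\|_{L^1}.
\end{equation*}
For the low frequencies, part \textbf{(C)} of Theorem \ref{ResultTime} applies since $\alpha_2=m_2$. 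It gives decay $(1+|t|)^{-\theta}$ with $\theta=\min\left(\frac{n+2}{4},\frac{n+2}{2}\right)=\frac{n+2}{4}$.

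\textbf{The main obstacle.} The low-frequency rate $\frac{n+2}{4}$ is weaker than the $\sigma=\frac{n+2}{2}$ needed for the admissibility condition $\frac{2}{q}+\frac{n+2}{r}\leq\frac{n+2}{2}$. I would handle this in one of two ways. The first option is to note that for $n\geq 2$, $\frac{n+2}{4}\geq1$, and to run Keel--Tao \cite{KT} for $U^{low}$ with $\sigma=\frac{n+2}{4}$. This gives only the restricted range $\frac{2}{q}+\frac{n+2}{2r}\leq\frac{n+2}{4}$, which is insufficient. The better option is to apply the $TT^*$ argument in Lemma \ref{L1LW} directly and exploit that the low-frequency part is bounded $L^2\to L^r$ for every $r\geq2$ by Bernstein (heat-kernel Gaussian bounds). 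Interpolating the $(1+|t|)^{-\frac{n+2}{4}}$ decay with this boundedness gives $L^{r'}\to L^r$ decay $(1+|t|)^{-\frac{n+2}{4}(1-\frac{2}{r})}$. Hardy--Littlewood--Sobolev (or direct integrability when the exponent exceeds $\frac{2}{q}\cdot$ appropriate) then handles most pairs. If the full admissible range still fails at low frequency, I would treat the remaining endpoint pairs by Sobolev embedding from a smaller $r$ using Bernstein for $\varphi(\sqrt{\mathcal{L}_a})$. I expect this low-frequency step to be the genuinely delicate point of the proof.

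\textbf{Strichartz for the pieces.} For $U_k$, $k\geq1$, Proposition \ref{Keel-Tao argument} applies with $h=2^{-2k}$, $\alpha=0$, $\sigma=\frac{n+2}{2}$, together with the energy estimate from \eqref{Plancherel}. This gives
\begin{equation*}
\|U_kf\|_{L^q_tL^r}\lesssim 2^{2k[\frac{n+2}{2}(\frac12-\frac1r)-\frac1q]}\|f\|_{L^2}=2^{ks}\|f\|_{L^2},
\end{equation*}
which matches $s=(n+2)(\frac12-\frac1r)-\frac2q$.

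\textbf{Summation.} Finally, sum exactly as in the fourth-order case. Split off $U^{low}$ and $U_1,U_2$, which are controlled by finitely many $L^2$ pieces. Use the almost-orthogonality $U_k\psi(2^{-j}\sqrt{\mathcal{L}_a})=0$ for $|j-k|\geq3$, the square function inequality of Proposition \ref{LP inequality}, and Minkowski ($q,r\geq2$). This yields the bound $\|\varphi(\sqrt{\mathcal{L}_a})f\|_{L^2}+\big(\sum_{j\geq1}2^{2js}\|\psi(2^{-j}\sqrt{\mathcal{L}_a})f\|_{L^2}^2\big)^{1/2}\sim\|f\|_{H^s_a}$.
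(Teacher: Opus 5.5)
Your high-frequency step (the semiclassical Keel--Tao proposition with $h=2^{-2k}$, $\alpha=0$, $\sigma=\frac{n+2}{2}$, giving the factor $2^{ks}$) and your summation step are the same as the paper's. You also locate the problem correctly: $U^{low}(t)$ decays only like $(1+|t|)^{-\frac{n+2}{4}}$. However, the repair you sketch does not produce a single new pair. Interpolating that decay with the $L^2$ bound gives $\|U^{low}(t)U^{low}(s)^*\|_{L^{r'}\to L^r}\lesssim(1+|t-s|)^{-\gamma}$ with $\gamma=\frac{n+2}{2}(\frac12-\frac1r)$. Young or Hardy--Littlewood--Sobolev in $t$ then requires $\frac2q\le\gamma$, i.e.\ $\frac4q+\frac{n+2}{r}\le\frac{n+2}{2}$. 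This is exactly the Keel--Tao range for $\sigma=\frac{n+2}{4}$ that you had already rejected. Bernstein for $\varphi(\sqrt{\mathcal{L}_a})$ only lets you increase $r$, and that range is already closed under increasing $r$. So the low-frequency estimate on the full range $\frac2q+\frac{n+2}{r}\le\frac{n+2}{2}$ is left unproved.

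It cannot be proved, because it is false. The potential is scale-critical, so $D_\epsilon g(x,y)=\epsilon^{\frac{n+2}{2}}g(\epsilon x,\epsilon y)$ satisfies $\mathcal{L}_aD_\epsilon=\epsilon^2D_\epsilon\mathcal{L}_a$. Take $0\neq g=\psi(\sqrt{\mathcal{L}_a})g_0$ and $f_\epsilon=D_\epsilon g$ with $\epsilon\le\frac12$. Then $\|f_\epsilon\|_{H^s_a}\lesssim\|g\|_{L^2}$, and the substitution $t=\epsilon^{-4}\tau$ gives
\[
\|e^{it\sqrt{1+\mathcal{L}^2_a}}f_\epsilon\|_{L^q_tL^r}=\epsilon^{s-\frac2q}\,\|e^{i\tau\Phi_\epsilon(\mathcal{L}_a)}g\|_{L^q_\tau L^r},\qquad \Phi_\epsilon(\mu)=\frac{\mu^2}{1+\sqrt{1+\epsilon^4\mu^2}}.
\]
On each bounded $\tau$-interval, the last norm tends to that of $e^{i\tau\mathcal{L}_a^2/2}g\neq0$ (by convergence in $L^2$ plus Bernstein). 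So the inequality fails whenever $s<\frac2q$, for example for every sharp pair ($s=0$) with $r>2$.

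The paper's own proof has the same hole. It feeds the $(1+|t|)^{-\frac{n+2}{4}}$ decay of $U^{low}$ into Keel--Tao's Theorem 1.2 and asserts the full range, which that theorem does not give. What your scheme (and the paper's) actually proves is the statement under the extra hypothesis $\frac4q+\frac{n+2}{r}\le\frac{n+2}{2}$, i.e.\ $s\ge\frac2q$. Under that hypothesis, plain Keel--Tao at $\sigma=\frac{n+2}{4}$ handles $U^{low}$ and no detour is needed.
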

\begin{proof} Denote $U^{low}(t)=e^{it\sqrt{1+\mathcal{L}^2_a}}\varphi(\sqrt{\mathcal{L}_a})$ and $U_k(t)=e^{it\sqrt{1+\mathcal{L}^2_a}}\psi(2^{-k}\sqrt{\mathcal{L}_a})$, for any $k\in\mathbb{Z}$.
It follows from Theorem \ref{ResultTime} that for any $0\leq \theta\leq \frac{n+2}{2}$ and $k\geq1$,
\begin{align*}
\|U_k(t)f\|_{L^\infty_{x,y}(\mathbb{R}^{n+2})}&\lesssim |t|^{-\theta}2^{k(n+2-2\theta)}\|f\|_{L^1_{x,y}(\mathbb{R}^{n+2})},\\
\|U^{low}(t)f\|_{L^\infty_{x,y}(\mathbb{R}^{n+2})}&\lesssim (1+|t|)^{-\frac{\theta}{2}}\|f\|_{L^1_{x,y}(\mathbb{R}^{n+2})},
\end{align*}
which implies that
\begin{align*}
\|U_k(t)f\|_{L^\infty_{x,y}(\mathbb{R}^{n+2})}&\lesssim (2^{-2k}+|t|)^{-\frac{n+2}{2}}\|f\|_{L^1_{x,y}(\mathbb{R}^{n+2})},\\
\|U^{low}(t)f\|_{L^\infty_{x,y}(\mathbb{R}^{n+2})}&\lesssim (1+|t|)^{-\frac{n+2}{4}}\|f\|_{L^1_{x,y}(\mathbb{R}^{n+2})},
\end{align*}
Then we have
\begin{align*}
\|U_k(t)U_k(s)^*\|_{L^1_{x,y}(\mathbb{R}^{n+2})\rightarrow L^\infty_{x,y}(\mathbb{R}^{n+2})}&\lesssim (2^{-2k}+|t-s|)^{-\frac{n+2}{2}},\quad \forall k\geq1,\\
\|U^{low}(t)U^{low}(s)^*\|_{L^1_{x,y}(\mathbb{R}^{n+2})\rightarrow L^\infty_{x,y}(\mathbb{R}^{n+2})}&\lesssim (1+|t-s|)^{-\frac{n+2}{4}},
\end{align*}
where $U_k(t),k\geq1$ satisfies the estimates \eqref{1 to infty} in Proposition \ref{Keel-Tao argument} for $\alpha=0$, $\sigma=\frac{n+2}{2}$ and $h=2^{-2k}$. 
On the other hand, by Plancherel's formula {\color{red} \eqref{Plancherel}}, we get the energy estimate
\begin{equation*}
\|U^{low}(t)\|_{L^2_{x,y}(\mathbb{R}^{n+2})\rightarrow L^2_{x,y}(\mathbb{R}^{n+2})}\lesssim 1,\quad \|U_k(t)\|_{L^2_{x,y}(\mathbb{R}^{n+2})\rightarrow L^2_{x,y}(\mathbb{R}^{n+2})}\lesssim 1,\quad \forall k\geq1.
\end{equation*}
Using Keel-Tao's argument in Proposition \ref{Keel-Tao argument} and Theorem 1.2 in Keel-Tao \cite{KT}, it gives that
\begin{align*}
\|U_k(t)f\|_{L_t^q(\mathbb{R},L^r_{x,y}(\mathbb{R}^{2+n}))}&\lesssim 2^{k[(n+2)(\frac{1}{2}-\frac{1}{r})-\frac{2}{q}]}\|f\|_{L^2(\mathbb{R}^{2+n})}, \quad\forall k\geq1,\\
\|U^{low}(t)f\|_{L_t^q(\mathbb{R},L^r_{x,y}(\mathbb{R}^{2+n}))}&\lesssim \|f\|_{L^2(\mathbb{R}^{2+n})},
\end{align*}
for $q\in[2,\infty]$, $r\in [2,\infty)$ such that $\frac{2}{q}+\frac{n+2}{r}\leq \frac{n+2}{2}$. Notice that
\begin{equation*}
e^{it\sqrt{1+\mathcal{L}^2_a}}f=U^{low}(t)f+\sum_{k=1}^\infty U_k(t)f,
\end{equation*}
and $U_k(t)\psi(2^{-j}\sqrt{\mathcal{L}_a})f$ vanishes if $|j-k|\geq3$. Hence, 
\begin{align*}
\|U^{low}(t)f\|_{L_t^q(\mathbb{R},L^r_{x,y}(\mathbb{R}^{n+2}))}&=\|U^{low}(t)(\varphi(\sqrt{\mathcal{L}_a})+\sum_{j=1}^2\psi(2^{-j}\sqrt{\mathcal{L}_a}))f\|_{L_t^q(\mathbb{R},L^r_{x,y}(\mathbb{R}^{n+2}))}\\
&\lesssim \|(\varphi(\sqrt{\mathcal{L}_a})+\sum_{j=1}^2\psi(2^{-j}\sqrt{\mathcal{L}_a}))f\|_{L^2_{x,y}(\mathbb{R}^{n+2})}\\
&\leq \|\varphi(\sqrt{\mathcal{L}_a})f\|_{L^2_{x,y}(\mathbb{R}^{n+2})}+\sum_{j=1}^2\|\psi(2^{-j}\sqrt{\mathcal{L}_a})f\|_{L^2_{x,y}(\mathbb{R}^{n+2})},
\end{align*}
and
\begin{align*}
&\quad \|\sum_{k=1}^\infty U_k(t)f\|_{L_t^q(\mathbb{R},L^r_{x,y}(\mathbb{R}^{n+2}))}\\
&=\|\sum_{j\in\mathbb{Z}}\sum_{k=1}^\infty U_k(t)\psi(2^{-j}\sqrt{\mathcal{L}_a})f\|_{L_t^q(\mathbb{R},L^r_{x,y}(\mathbb{R}^{n+2}))}\\
&=\Big\|\sum_{j=-1}^3U_1(t)\psi(2^{-j}\sqrt{\mathcal{L}_a})f+\sum_{j=0}^4U_2(t)\psi(2^{-j}\sqrt{\mathcal{L}_a})f\\
&\qquad\qquad\qquad\qquad\qquad\qquad\qquad+\sum_{j\in\mathbb{Z}}^\infty\sum_{k=3}^\infty U_k(t)\psi(2^{-j}\sqrt{\mathcal{L}_a})f\Big\|_{L_t^q(\mathbb{R},L^r_{x,y}(\mathbb{R}^{n+2}))}.\\
&\leq \sum_{j=-1}^3\|U_1(t)\psi(2^{-j}\sqrt{\mathcal{L}_a})f\|_{L_t^q(\mathbb{R},L^r_{x,y}(\mathbb{R}^{n+2}))}+\sum_{j=0}^4\|U_2(t)\psi(2^{-j}\sqrt{\mathcal{L}_a})f\|_{L_t^q(\mathbb{R},L^r_{x,y}(\mathbb{R}^{n+2}))}\\
&\qquad\qquad\qquad\qquad\qquad\qquad\qquad+\|\sum_{j\in\mathbb{Z}}^\infty\sum_{k=3}^\infty U_k(t)\psi(2^{-j}\sqrt{\mathcal{L}_a})f\|_{L_t^q(\mathbb{R},L^r_{x,y}(\mathbb{R}^{n+2}))}.\\
&\lesssim \sum_{j=-1}^4\|\psi(2^{-j}\sqrt{\mathcal{L}_a})f\|_{L^2_{x,y}(\mathbb{R}^{n+2})}+\|\sum_{j\in\mathbb{Z}}^\infty\sum_{k=3}^\infty U_k(t)\psi(2^{-j}\sqrt{\mathcal{L}_a})f\|_{L_t^q(\mathbb{R},L^r_{x,y}(\mathbb{R}^{n+2}))}.
\end{align*}
Using the Littlewood-Paley square inequality in Proposition \ref{LP inequality} and the Minkowski inequality, we have
\begin{align*}
&\quad\|\sum_{j\in\mathbb{Z}}^\infty\sum_{k=3}^\infty U_k(t)\psi(2^{-j}\sqrt{\mathcal{L}_a})f\|_{L_t^q(\mathbb{R},L^r_{x,y}(\mathbb{R}^{n+2}))}\\
&\lesssim \|\Big(\sum_{j\in\mathbb{Z}}^\infty |\sum_{k=3}^\infty U_k(t)\psi(2^{-j}\sqrt{\mathcal{L}_a})f|^2\Big)^\frac{1}{2}\|_{L_t^q(\mathbb{R},L^r_{x,y}(\mathbb{R}^{n+2}))}\\
&\lesssim \|\Big(\sum_{j\in\mathbb{Z}}^\infty \sum_{\substack{k\geq 3\\ |k-j|\leq 2}} |U_k(t)\psi(2^{-j}\sqrt{\mathcal{L}_a})f|^2\Big)^\frac{1}{2}\|_{L_t^q(\mathbb{R},L^r_{x,y}(\mathbb{R}^{n+2}))}\\
&\leq \Big(\sum_{j\in\mathbb{Z}}^\infty \sum_{\substack{k\geq 3\\ |k-j|\leq 2}} \|U_k(t)\psi(2^{-j}\sqrt{\mathcal{L}_a})f\|_{L_t^q(\mathbb{R},L^r_{x,y}(\mathbb{R}^{n+2}))}^2\Big)^\frac{1}{2}\\
&\lesssim \|\Big(\sum_{j\in\mathbb{Z}}^\infty \sum_{\substack{k\geq 3\\ |k-j|\leq 2}} 2^{2k[(n+2)(\frac{1}{2}-\frac{1}{r})-\frac{2}{q}]}\|\psi(2^{-j}\sqrt{\mathcal{L}_a})f\|^2_{L^2(\mathbb{R}^{2+n})}\Big)^\frac{1}{2}\|_{L_t^q(\mathbb{R},L^r_{x,y}(\mathbb{R}^{n+2}))}\\
&\lesssim \Big(\sum_{j\geq1}^\infty 2^{2j[(n+2)(\frac{1}{2}-\frac{1}{r})-\frac{2}{q}]}\|\psi(2^{-j}\sqrt{\mathcal{L}_a})f\|^2_{L^2(\mathbb{R}^{2+n})}\Big)^\frac{1}{2},
\end{align*}
Therefore, we get
\begin{align*}
&\quad \|e^{it\sqrt{1+\mathcal{L}^2_a}}f\|_{L_t^q(\mathbb{R},L^r_{x,y}(\mathbb{R}^{n+2}))}\\
&\lesssim \|U^{low}(t)f\|_{L_t^q(\mathbb{R},L^r_{x,y}(\mathbb{R}^{n+2}))}+\|\sum_{k=1}^\infty U_k(t)f\|_{L_t^q(\mathbb{R},L^r_{x,y}(\mathbb{R}^{n+2}))}\\
&\lesssim\|\varphi(\sqrt{\mathcal{L}_a})f\|_{L^2_{x,y}(\mathbb{R}^{n+2})}+\sum_{j=-1}^4\|\psi(2^{-j}\sqrt{\mathcal{L}_a})f\|_{L^2_{x,y}(\mathbb{R}^{n+2})}\\
&\qquad\qquad\qquad\qquad+\Big(\sum_{j\geq1}^\infty 2^{2j[(n+2)(\frac{1}{2}-\frac{1}{r})-\frac{2}{q}]}\|\psi(2^{-j}\sqrt{\mathcal{L}_a})f\|^2_{L^2(\mathbb{R}^{2+n})}\Big)^\frac{1}{2}\\
&\lesssim\|\varphi(\sqrt{\mathcal{L}_a})f\|_{L^2_{x,y}(\mathbb{R}^{n+2})}+\Big(\sum_{j\geq1}^\infty 2^{2j[(n+2)(\frac{1}{2}-\frac{1}{r})-\frac{2}{q}]}\|\psi(2^{-j}\sqrt{\mathcal{L}_a})f\|^2_{L^2(\mathbb{R}^{2+n})}\Big)^\frac{1}{2}\\
&=\|f\|_{H_a^{(n+2)(\frac{1}{2}-\frac{1}{r})-\frac{2}{q}}},
\end{align*}
which completes the proof.
\end{proof}
By Theorem \ref{Beam estimate}, we immediately deduce the Strichartz inequality for the solution of the beam equation \eqref{BeamEqu}. 
\begin{corollary}
Under the same hypotheses as in Theorem \ref{Beam estimate}, the solution $u$ of the beam equation \eqref{BeamEqu} satisfies the following estimate
	\begin{equation*}
	\|u\|_{L_t^q(\mathbb{R},L^r_{x,y}(\mathbb{R}^{2+n}))}\lesssim \|f\|_{H^s_a(\mathbb{R}^{2+n})}+\|g\|_{H^{s-1}_a(\mathbb{R}^{2+n})}.
	\end{equation*}
\end{corollary}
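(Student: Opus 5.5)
The corollary follows from Theorem \ref{Beam estimate} applied to each of the two pieces of the Duhamel formula. I would write
\[
u(t)=\tfrac12\bigl(e^{it\sqrt{1+\mathcal{L}_a^2}}+e^{-it\sqrt{1+\mathcal{L}_a^2}}\bigr)f+\tfrac{1}{2i}\bigl(e^{it\sqrt{1+\mathcal{L}_a^2}}-e^{-it\sqrt{1+\mathcal{L}_a^2}}\bigr)(1+\mathcal{L}_a^2)^{-1/2}g
\]
and estimate the four terms separately by the triangle inequality.

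\textbf{Step 1: the $f$-terms.} For $e^{it\sqrt{1+\mathcal{L}_a^2}}f$, Theorem \ref{Beam estimate} gives the bound $\|f\|_{H^s_a}$ directly. For $e^{-it\sqrt{1+\mathcal{L}_a^2}}f$, I would use the time reflection $t\mapsto -t$, which leaves the $L^q_t(\mathbb{R})$ norm unchanged. Equivalently, $\phi=-\sqrt{1+\lambda^4}$ satisfies (C1)--(C4) with the same exponents, so Theorem \ref{ResultTime} and hence the theorem's proof apply verbatim.

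\textbf{Step 2: the $g$-terms.} These are bounded by $\|(1+\mathcal{L}_a^2)^{-1/2}g\|_{H^s_a}$. Since $(1+\mathcal{L}_a)^{s/2}$ and $(1+\mathcal{L}_a^2)^{-1/2}$ are both functions of $\mathcal{L}_a$, the distorted Plancherel formula \eqref{Plancherel} gives
\[
\|(1+\mathcal{L}_a^2)^{-1/2}g\|_{H^s_a}=\bigl\|(1+\mu)^{s/2}(1+\mu^2)^{-1/2}\,\mathcal{F}g\bigr\|_{L^2},\qquad \mu=\rho^2+|\eta|^2 .
\]
The multiplier $(1+\mu^2)^{-1/2}$ is comparable to $(1+\mu)^{-1}$, which would give the bound $\|g\|_{H^{s-2}_a}$. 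Because $(1+\mu)^{-1}\le(1+\mu)^{-1/2}$, this is in turn at most $\|g\|_{H^{s-1}_a}$, so the stated estimate (even a slightly stronger one) follows.

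\textbf{Main obstacle.} The only point needing care is justifying the negative-time propagator and the possibly negative Sobolev index $s-1$ when $s<1$. Both are handled on the spectral side: the Plancherel identity and the multiplier comparison work for any real index.
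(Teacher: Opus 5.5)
Your reduction is the same as the paper's, whose proof is only the remark that the corollary follows immediately from Theorem~\ref{Beam estimate} and the Duhamel formula. Your added details are correct: time reflection handles $e^{-it\sqrt{1+\mathcal{L}_a^2}}$, and since $(1+\mu^2)^{1/2}\sim 1+\mu$, the $g$-terms are even controlled by $\|g\|_{H^{s-2}_a}\le\|g\|_{H^{s-1}_a}$.

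The genuine problem is upstream, and it affects the paper's argument just as much as yours. Step 1 uses Theorem~\ref{Beam estimate} on its full stated range, and on part of that range the theorem is false.

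Here $m_2=4$, so part (C) of Theorem~\ref{ResultTime} only gives $\|U^{low}(t)\|_{L^1\to L^\infty}\lesssim(1+|t|)^{-\frac{n+2}{4}}$. Keel--Tao with $\sigma=\frac{n+2}{4}$ then yields $\|U^{low}(t)f\|_{L^q_tL^r}\lesssim\|f\|_{L^2}$ only when $\frac{4}{q}+\frac{n+2}{r}\le\frac{n+2}{2}$. The proof of Theorem~\ref{Beam estimate} asserts it under the weaker condition $\frac{2}{q}+\frac{n+2}{r}\le\frac{n+2}{2}$.

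This loss is real, as a scaling argument shows:
\begin{itemize}
\item Put $d=n+2$ and fix $w\neq0$ with $\sqrt{\mathcal{L}_a}$-spectrum in $[\frac12,1]$. Let $f_\epsilon(z)=\epsilon^{d/2}w(\epsilon z)$.
\item The partial inverse-square potential is scale-critical, so $\mathcal{L}_a$ is dilation-homogeneous of degree $2$. Hence $f_\epsilon$ has spectrum in $[\frac{\epsilon}{2},\epsilon]$.
\item Take data $(f,g)=(f_\epsilon,\,i\sqrt{1+\mathcal{L}_a^2}\,f_\epsilon)$. Then $u=e^{it\sqrt{1+\mathcal{L}_a^2}}f_\epsilon$ and $\|f\|_{H^s_a}+\|g\|_{H^{s-1}_a}\lesssim\|w\|_{L^2}$ uniformly in $\epsilon\le 1$.
\item On the other hand,
\[
\|u\|_{L^q_tL^r}=\epsilon^{d(\frac12-\frac1r)-\frac4q}\,\bigl\|e^{i\tau\Phi_\epsilon(\sqrt{\mathcal{L}_a})}w\bigr\|_{L^q_\tau L^r},\qquad \Phi_\epsilon(\lambda)=\epsilon^{-4}\bigl(\sqrt{1+\epsilon^4\lambda^4}-1\bigr).
\]
\item The last norm is bounded below uniformly in $\epsilon$. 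To see this, restrict to $0\le\tau\le\delta$ and use $0\le\Phi_\epsilon\le\frac12$ on $[\frac12,1]$ together with the $L^2\to L^r$ boundedness of spectral cutoffs.
\end{itemize}

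Letting $\epsilon\to0$ shows that the corollary, and Theorem~\ref{Beam estimate}, fail whenever $\frac{2}{q}\le(n+2)(\frac12-\frac1r)<\frac{4}{q}$. An example is the endpoint pair $(q,r)=(2,\frac{2(n+2)}{n})$ with $s=0$, where the ratio blows up like $\epsilon^{-1}$.

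What your argument does establish is the corollary, with the sharper $\|g\|_{H^{s-2}_a}$, under the extra restriction $\frac{4}{q}+\frac{n+2}{r}\le\frac{n+2}{2}$. On that range the low-frequency Keel--Tao step is legitimate and your reduction goes through unchanged.
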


\section*{Statements and Declarations} The authors confirm that the data supporting the findings of this study are available within the article and its supplementary materials.

  \textbf{Competing Interests:} No potential competing of interest was reported by the author. 


\begin{thebibliography}{99}
		
		
		\normalsize
		\baselineskip=17pt

  
\bibitem{AP2009}\label{AP2009} J. -P. Anker and V. Pierfelice, \emph{Nonlinear Schr\"odinger equation on real hyperbolic spaces}, Ann. Inst. H.
Poincar\'e (C) Non Linear Anal. \textbf{26}(5), 1853-1869 (2009).

\bibitem{AP2014}\label{AP2014} J. -P. Anker and V. Pierfelice, \emph{Wave and Klein-Gordon equations on hyperbolic spaces}, Anal. PDE \textbf{7}, 953-995 (2014).

\bibitem{APV2012}\label{APV2012} J. -P. Anker, V. Pierfelice, and M. Vallarino, \emph{The wave equation on hyperbolic spaces}, J. Differ. Equ. \textbf{252}(10), 5613-5661 (2012).

\bibitem{BBG2021}\label{BBG2001} H. Bahouri, D. Barilari, and I. Gallagher, \emph{Strichartz estimates and Fourier restriction theorems on the Heisenberg group}, J. Fourier Anal. Appl. \textbf{27}(2) (2021).

\bibitem{Bahouri-app} H. Bahouri, J.-Y. Chemin,  and R. Danchin, \emph{Fourier analysis and nonlinear partial differential equations},  Grundlehrender Math. Wissenschaften 343, Springer, Heidelberg, 2011.

\bibitem{BGX2000}\label{BGX2000} H. Bahouri, P. G\'{e}rard, and C. J. Xu, \emph{Espaces de Besov et estimations de Strichartz g\'{e}n\'{e}ralis\'{e}es sur le groupe de
Heisenberg}, J. Anal. Math. \textbf{82}, 93-118 (2000).

\bibitem{BKG}\label{BKG} H. Bahouri, C. F. Kammerer, and I. Gallagher, \emph{Dispersive estimates for the Schr\"{o}dinger operator on step 2 stratified Lie groups}, Anal. PDE \textbf{9}, 545-574 (2016).

\bibitem{Ben said}\label{Ben said} 	 S. Ben Sa\"id, \emph{Strichartz estimates for Schr\"odinger-Laguerre operators}, {Semigroup Forum} \textbf{90}, 251-269 (2015). 
		
\bibitem{Ratna3}  S. Ben Sa\"id, A. K. Nandakumaran, and  P. K. Ratnakumar, \emph{Schr\"odinger propagator and the Dunkl Laplacian},    hal-00578446v1, (2011).


\bibitem{B1993}\label{B1993} J. Bourgain, \emph{Fourier transform restriction phenomena for certain lattice subsets and application to
nonlinear evolution equations \uppercase \expandafter {\romannumeral 1}}, Geom. Funct. Anal. \textbf{3}, 107-156 (1993).

\bibitem{BDDM2019}\label{BDDM2019} T. A. Bui, P. D\text{'}Ancona, X. T. Duong, and  D. M\"{u}ller, \emph{On the flows associated to self-adjoint operators on metric measure spaces}, Math. Ann. \textbf{375}, 1393-1426 (2019).

\bibitem{BGT2004}\label{BGT2004} N. Burq, P. G\'{e}rard, and N. Tzvetkov, \emph{Strichartz inequalities and the nonlinear Schr\"odinger equation on
compact manifolds}, Amer. J. Math. \textbf{126}, 569-605 (2004).

\bibitem{burq2003} N. Burq, F. Planchon, J.G. Stalker, A.S. Tahvildar-Zadeh, 
\emph{Strichartz estimates for the wave and Schrödinger equations with the inverse-square potential}, J. Funct. Anal. \textbf{203}, 519--549 (2003).


\bibitem{DPR2010}\label{DPR2010} P. D\text{'}Ancona, V. Pierfelice, and F. Ricci, \emph{On the wave equation associated to the Hermite and the twisted Laplacian}, J. Fourier Anal. Appl. \textbf{16}(2), 294-310 (2010).

\bibitem{FMSW}G. Feng, S. S. Mondal, M. Song, H. Wu, \emph{Orthonormal Strichartz inequalities and their applications on abstract measure spaces}, Math. Z. \textbf{312}(55), Paper No. 55, 42 pp (2026).

\bibitem{Feng-Song}\label{Feng-Song} G. Feng, M. Song, \emph{Restriction theorems and Strichartz inequalities for the Laguerre operator involving orthonormal functions}, J. Geom. Anal. \textbf{34}, 287 (2024).

\bibitem{FSW}\label{FSW} G. Feng, M. Song, and H. Wu, \emph{Decay estimates for a class of semigroups related to self-adjoint operators on metric measure spaces}, J. Geom. Anal. \textbf{35}(7), Paper No. 192 (2025).

\bibitem{FMV1}\label{FMV1}G. Furioli, C. Melzi, and A. Veneruso, \emph{Strichartz inequalities for the wave equation with the full Laplacian on the Heisenberg group},  Canad. J. Math. \textbf{59}(6), 1301-1322 (2007).

\bibitem{FV}\label{FV} G. Furioli and A. Veneruso, \emph{Strichartz inequalities for the Schr\"{o}dinger equation with the full Laplacian on the Heisenberg group}, Studia Math. \textbf{160}, 157-178 (2004).

\bibitem{GV}J. Ginibre and  G. Velo, \emph{Generalized Strichartz inequalities for the wave equation}, J. Funct. Anal. \textbf{133}, 50-68 (1995).

\bibitem{GLNY}  Z. Guo, J. Li, K. Nakanishi and L. Yan, \emph{On the boundary Strichartz estimates for wave and Schr\"odinger equations}, J. Differential Equations \textbf{265} (11), 5656-5675 (2018).

\bibitem{GPW2008} Z. Guo, L. Peng, and B. Wang, \emph{Decay estimates for a class of wave equations}, J. Funct. Anal. \textbf{254}(6), 1642-1660 (2008).

\bibitem{H2005}\label{H2005} M. D. Hierro, \emph{Dispersive and Strichartz estimates on H-type groups}, Studia Math. \textbf{169}, 1-20 (2005).

\bibitem{ILP2014} O. Ivanovici, G. Lebeau, and F. Planchon, \emph{Dispersion for the wave equation inside strictly convex domains \uppercase \expandafter {\romannumeral 1}: the Friedlander model case}, Ann. Math. \textbf{180}, 323-380 (2014).

\bibitem{KT}\label{KT} M. Keel and  T. Tao, \emph{Endpoints Strichartz estimates}, Amer. J. Math. \textbf{120}, 955-980 (1998). 

\bibitem{Koch-Tataru-Zworski} H. Koch, D. Tataru, M. Zworski, \emph{Semiclassical $L^p$ estimates}, Ann. Henri Poincar\'e \textbf{8}, 885-916 (2007).

\bibitem{LS2014} H. Liu and M. Song, \emph{Strichartz inequalities for the wave equation with the full Laplacian on H-type groups}, Abstr. Appl. Anal., Art. ID 219375, 10pp (2014).

\bibitem{Mejjaoli2008-1} H. Mejjaoli, 
\emph{Strichartz estimates for the Dunkl wave equation and application},
J. Math. Anal. Appl. \textbf{346}(1), 41–54 (2008).

\bibitem{Mejjaoli2009} H. Mejjaoli, \emph{Dispersion phenomena in Dunkl-Schr\"{o}dinger equation and applications},
Serdica Math. J. \textbf{35}(1), 25–60 (2009).

\bibitem{Mejjaoli2013}  H. Majjaoli,  \emph{Dunkl-Schr\"odinger semigroups and applications},  Appl. Anal. \textbf{92}(8), 1597-1626 (2013).

\bibitem{MS}S. S. Mondal and  M. Song, \emph{Orthonormal Strichartz inequalities for the $(k,a)$-generalized Laguerre operator and Dunkl operator},  Israel J. Math. \textbf{269}(2), 697-729 (2025).  

\bibitem{NR2005}\label{NR2005} A. K. Nandakumaran, and P. K. Ratnakumar, \emph{Schr\"{o}dinger equation and the oscillatory semigroup for the Hermite operator}, J. Funct. Anal. \textbf{224}(2), 371-385 (2005).

\bibitem{R2008}\label{R2008} P. K. Ratnakumar, \emph{On Schr\"{o}dinger Propagator for the Special Hermite Operator}, J. Fourier Anal. Appl. \textbf{14}(2), 286-300 (2008).

\bibitem{S2013}V. K. Sohani, \emph{Strichartz estimates for the Schr\"{o}dinger propagator for the Laguerre operator}, Proc. Math. Sci. \textbf{123}, 525-537 (2013).

\bibitem{Song2026} M. Song, \emph{Strichartz inequalities for the Schr\"odinger equation with the full Laplacian on H-type groups}, J. Geom. Phys. \textbf{228}, 105925 (2026).

\bibitem{Song2016} M. Song, \emph{Decay estimates for fractional wave equations on H-type groups}, J. Inequal. Appl. \textbf{2016}(246), 12pp (2016).

\bibitem{Song-Tan}\label{Song-Tan} M. Song, J. Tan, \emph{Decay estimates and Strichartz inequalities for a class of dispersive equations on H-type groups}, Discrete Contin. Dyn. Syst. \textbf{50}, 103-131(2026).

\bibitem{SY2023}M. Song and J. Yang, \emph{Decay estimates for a class of wave equations on the Heisenberg group}, Ann. Mat. Pur. Appl. \textbf{202}, 2665–2685(2023).


\bibitem{SZ}N. Song and  J. Zhao, \emph{Strichartz estimates on the quaternion Heisenberg group}, Bull. Sci. Math. \textbf{138}(2), 293-315 (2014).

\bibitem{Stein1984} E. M. Stein,  \textit{Oscillatory integrals in Fourier analysis}.  In: Beijing Lectures in Harmonic Analysis (Beijing, 1984),  Annals of Mathematics Studies, vol. 112,  Princeton University Press, Princeton, 1986.

\bibitem{S1993}\label{S1993} E. M. Stein, \emph{Harmonic analysis: real-variable methods, orthogonality and oscillatory integrals}, Princeton Univ. Press, 1993.

\bibitem{Str}\label{Str} R. S. Strichartz, \emph{Restrictions of Fourier transforms to quadratic surfaces and
	decay of solutions of wave equations}, Duke Math. J. \textbf{44}, 705-714 (1977).

\bibitem{taylor1996} M. Taylor, \emph{Partial Differential Equations, Vol. II}. Springer (1996).

\bibitem{T1} P. A. Tomas,   \emph{A restriction theorem for the Fourier transform},  {Bull. Amer. Math. Soc.} \textbf{81}, 477-478 (1975).

\bibitem{T2} P. A. Tomas,   \emph{Restriction theorems for the Fourier transform},   In: Proc. Symp. Pure Math.,  XXXV (1979).

\bibitem{WXZZ} J. Wang, C. Xu, F. Zhang, J. Zhang, \emph{$L^p$-estimates for the wave equation with partial inverse-square potentials}, arXiv:2603.27111 (2026)

\bibitem{Zhang-Zhang}\label{Zhang-Zhang} F. Zhang, J. Zhang, \emph{Strichartz estimates for dispersive equations with partial inverse-square potentials}, J. Geom. Anal. \textbf{35}(3), Paper No. 71, 27pp (2025).

\bibitem{Zhang2019} J. Zhang, \emph{Strichartz estimates and nonlinear wave equation on nontrapping asymptotically conic manifolds}, Adv. Math. \textbf{271}, 91–111 (2015).

\bibitem{Zworski} M. Zworski, \emph{Semiclassical Analysis}, Grad. Stud. Math., vol. 138, AMS, 2012.

	\end{thebibliography}
\end{document}